\DeclareMathOperator{\arctanh}{arctanh}
\newcommand{\ut}{{\underline t}}
\newcommand{\ux}{{\underline x}}
\newcommand{\uQ}{{\underline Q}}
\def\uu{\underline{u}}
\def\ou{\overline{u}}
\def\re{\mathbb{R}}
\def\cal{\mathcal}
\def\bk{\color{black}}
\def\gr{\color{Rubingreen}}
\def\bl{\color{blue}}
\def\elleuu{\ell_{\vare, \underline{u}}}
\def\elleu{\ell_{\vare, {u}}}
\newcommand{\und}{\underline}
\def\intt{\int_{0}^{\tau}}
\def\into{\int_{\Omega}}
\def\rn{\mathbb{R}^{N}}
 \def\w{{\bf w}} \def\z{{\bf z}} 
 \def\vare{\varepsilon}
\DeclareMathOperator{\dive}{div}
\DeclareMathOperator{\supp}{supp}
\DeclareMathOperator*{\sign}{sign}
\DeclareMathOperator*{\essinf}{ess\,inf}
\renewcommand{\epsilon}{{\varepsilon}}
\def\vp{\varphi}
 \newcommand{\res}               {\!\!\mathop{\hbox{
                                \vrule height 7pt width .5pt depth 0pt
                                \vrule height .5pt width 6pt depth 0pt}}
                                \nolimits}
\newcommand{\R}{{\mathbb R}}
\newcommand{\N}{{\mathbb N}}
\renewcommand{\d}{\,{\mathrm d}}
 \newcommand{\eps}{{\varepsilon}}
 \def\1{\raisebox{2pt}{\rm{$\chi$}}}
\def\dys{\displaystyle}
\newcommand{\matdot}{}
\newcommand{\ignore}[1]{}
\newcommand{\blue}[1]{{#1}}
\newcommand{\black}[1]{{#1}}
\def\bk{\black}
\newtheorem{thm}{Theorem}[section]
\newtheorem{remark}[theorem]{Remark}
\renewcommand{\gr}[1]{{\color{OliveGreen}#1}}
\begin{document}

\shorttitle{Nonlinear diffusion in transparent media}

\title{Nonlinear diffusion in transparent media}

\author{Lorenzo Giacomelli\affil{1}, Salvador Moll\affil{2}, and Francesco Petitta\affil{1}}
\abbrevauthor{L. Giacomelli, S. Moll, and F. Petitta}
\headabbrevauthor{Giacomelli, L., Moll, S., and Petitta, F.}

\address{\affilnum{1}SBAI Department,
 Sapienza University of Rome, Via Scarpa 16, 00161 Roma, Italy and
 \affilnum{2} Departament d'An\`{a}lisi Matem\`atica,
Universitat de Val\`encia, C/ Dr. Moliner, 50, 46100 Burjassot, Spain}
\correspdetails{
francesco.petitta@sbai.uniroma1.it}

\received{}
\communicated{}

\begin{abstract}
{We consider a prototypical nonlinear parabolic equation whose flux has three distinguished features: it is nonlinear with respect to both the unknown and its gradient, it is homogeneous, and it depends only on the direction of the gradient. For such equation,} we obtain existence and uniqueness of entropy solutions to the Dirichlet {problem, the homogeneous Neumann problem, and the Cauchy problem}. Qualitative properties of solutions, such as finite speed of propagation and {the occurrence of} waiting-time phenomena, with sharp bounds, are shown. We also {discuss the formation of jump} discontinuities both at the boundary of the {solutions'} support {and} in the bulk.

\medskip
\newline
{\noindent \bf Keywords:} Parabolic Equations, Dirichlet problem, Cauchy problem, Neumann problem, Entropy
solutions, Flux-saturated
diffusion equations, Waiting
time phenomena, Conservation laws

\newline
{\noindent \bf MSC Classification [2020]} {35K20, 35D99, 35K67, 35L65}
\end{abstract}

\maketitle

\tableofcontents

\section{Introduction}


%
%
%

{The paper is concerned with the following PDE:}
\begin{equation}\label{m}
u_t=  \dive \left(u^m \frac{ \nabla u}{|\nabla u|}\right){.}
\end{equation}
%
%
Two particular {values} 
of the parameter $m$ lead to well known equations. When $m=0$, {\eqref{m}} 
coincides with the {\it total variation flow}: we refer to the monograph \cite{ACMBook} for a detailed study of the subject
and to \cite{SapiroBook} for its applications in image processing. The case $m=1$ (the so-called {\it {heat equation in} transparent media}) was considered in \cite{ACMM-JEE}, where  existence and uniqueness of entropy solutions to the Cauchy problem for  \eqref{m} were obtained. In addition, the authors showed that solutions to the {\it relativistic heat equation},
\begin{equation}\label{ree}
\frac{\partial u}{\partial t}=\varrho\dive \left(u\frac{\nabla u}{\sqrt{u^2+\varrho^2|\nabla u|^2}}\right),
\end{equation}
converge to solutions of \eqref{m} (with $m=1$) as $\varrho\to +\infty$.

\smallskip

{Our focus is on the case $m>1$, in which} \eqref{m} is the formal limit of the {\it relativistic porous medium equation},
\begin{equation}\label{pmrhe}
\frac{\partial u}{\partial t}=\varrho\dive \left(\frac{{u^m}\nabla u}{\sqrt{u^2+\varrho^2|\nabla u|^2}}\right), \quad m>1\,,
\end{equation}
as the kinematic viscosity $\varrho$ tends to $+\infty$ (here the maximal speed of propagation has been normalized to $1$). Eq. \eqref{pmrhe} was introduced in \cite{r1,r2} while studying heat diffusion in neutral gases (precisely with $m=3/2$). Existence and uniqueness of solutions {to} the Cauchy problem associated to {\eqref{pmrhe}} were obtained in \cite{ACM_arma05}. This equation has received recently some attention and different key-features of solutions, such as propagation of support, waiting time phenomena, speed of discontinuity fronts, and pattern formations, have been addressed by many authors \cite{ACM_jde08,Caselles_jde11,G15,GMP1, Calvo_siam,cccss2, CCCSS_in16}.

\smallskip

Our interest in Eq. \eqref{m} is twofold.

\smallskip

{\it{Shock formation.}}. First of all, the dynamics of shock formation for solutions to \eqref{pmrhe} is not yet fully understood in this type of parabolic equations with hyperbolic phenomena. The studies are limited to some equations related to \eqref{pmrhe} in the pioneering contributions \cite{bdp,bl2, bl3}  and  to numerical simulations \cite{ACMSV_siam12,CCM_plms13}. Since \eqref{m} and \eqref{pmrhe} formally coincide where $|\nabla u|\gg 1$, in particular at  discontinuity fronts, \eqref{m} could serve as a prototype equation for investigating such phenomena. Moreover, Eq. \eqref{m} {has two scaling invariances}: thus one can expect to clarify and study qualitatively the strong interplays between hyperbolic and parabolic mechanisms in this type of flux--limited diffusion equations.

\smallskip

{\it{Well-posedness}}. \eqref{m} stands as a model for autonomous evolution equations in divergence form which, though of second order, have the same scaling {as that} of a first order nonlinear conservation law. For this type of equations, a well-posedness theory is not known at our best knowledge.



\medskip


Concerning {well-posedness, we will consider the Dirichlet problem, the homogeneous Neumann problem (both in a bounded domain $\Omega$), and the Cauchy problem. Our arguments rely on} nonlinear semigroup theory.
{In a bounded domain $\Omega$, in \cite{GMP} we studied the resolvent equation of \eqref{m}, i.e.}
%
%
\begin{equation}\label{pde-a}
u-f=\dive\left(u^m\frac{\nabla u}{|\nabla u|}\right) \quad \mbox{in }\ \Omega,
\end{equation}
and we obtained existence and contraction in $L^1(\Omega)$ (see Theorem \ref{thm-elliptic} below).
{By associating  {an} $m-$accretive operator in $L^1(\Omega)$ to solutions to \eqref{pde-a} we obtain existence of a mild solution to \eqref{m}.}
%
%
In order to characterize such solution, we introduce a definition of entropy solutions and subsolutions to \eqref{m} and we prove that the semigroup solution is in fact an entropy solution. Finally, we show that a comparison principle holds in $L^1$ between subsolutions and solutions, which yields uniqueness of solutions.  This programme is worked out in Section 3 for the nonhomogeneous Dirichlet problem associated to \eqref{m}, while the corresponding results for the homogeneous Neumann problem {and the Cauchy problem are} discussed in Section {4 and 6, respectively}.

\medskip

The second main objective of this paper is to study qualitative properties of solutions to \eqref{m}. {In Section 5,} we construct a family of {compactly supported} self-similar ${S}BV$-solutions{:} 
together with the comparison principle, this permits to show the finite speed of propagation property.
{In Section 6,}
thanks to the finite speed of propagation property, we obtain existence {and uniqueness} of solutions to the Cauchy problem for bounded and compactly supported initial data{. There,} we also characterize entropy solutions as those distributional solutions that satisfy the corresponding Rankine-Hugoniot jump conditions (together with an inequality for the Cantor part, if any). {In Section 7 we perform} a complete study of the waiting time phenomenon: we show that there is a scaling-wise sharp bound on the behavior at the boundary of the {solutions'} support, {which discriminates between occurrence and non-occurrence of a waiting time phenomenon}. The corresponding results {for Eq. \eqref{pmrhe}} are contained in \cite{G15, GMP1}. Finally, in the one-dimensional case we discuss similarities and differences between the behavior of solutions to \eqref{m} and those of the Burger's equation. 
{This is done in Section 8}, where we also show that {the formation of jump discontinuities may take place} both at the boundary of the {solution's} support {and} in the bulk.

\section{Preliminaries and Notation}\label{ss-not}

{
Throughout the paper, $m>1$ and $\Omega$ is a bounded open subset of $\R^N$ with Lipschitz boundary $\partial\Omega$.
}
For a general $\ell\in L^{1}_{loc} (\mathbb R)$, we let
\begin{equation}
J_{\ell}(s)=\int_{0}^{s}\ell(\sigma)\ d\sigma\,,\ \ \text{and}\ \ \ \Phi_{\ell}(s)=\int_{0}^{s} \ell'(\sigma)\vp (\sigma)\ d\sigma\,,
\end{equation}
where we have written $\varphi(s):=s^m$, for $s>0$ to ease the notation.
Moreover, let
 \begin{eqnarray*}
& \mathcal{L} = \{\ell:[0,\infty)\to [0,\infty):\ \ell '\geq 0,\ {\rm Lip}(\ell)<\infty,\ {\ell(0)=0,} \ {\rm supp}({\ell'})\subset (0,\infty{ )}\}\,.
\end{eqnarray*}
For $a,b,l\in [-\infty,+\infty]$ we let $$T_{a,b}^l(r)=\max\{\min\{b,r\},a\}-l,$$ and we define, for $r\geq0$,
\begin{eqnarray*}
&\mathcal T_+ = \{T_{a,b}^l: \ 0<a<b, \ l\le a\}.
\end{eqnarray*}
For a given $T=T_{a,b}^l \in \mathcal T_+$, we let $T^0:= T + l = T_{a,b}^0.$
{The subscript $+$ on a function space denotes that the functions within it are nonnegative.}

\begin{figure}[htbp]\centering
\includegraphics[width=2in]{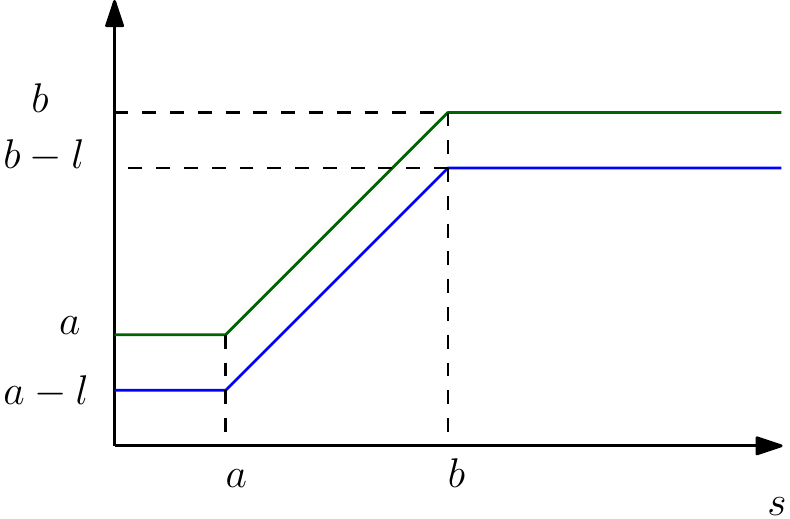}
\caption{$\bl T_{a,b}^{l}(s)$ and $\ \gr{T_{a,b}^{0}(s)}$}
\end{figure}

We denote by $\mathcal H^{N-1}$ the $(N-1)$-dimensional Hausdorff measure, by $\mathcal L^N$ the $N$-dimensional Lebesgue measure, and by ${\mathcal M}(\Omega)$ the space of  finite Radon measures on $\Omega$ (see \cite[Def. 1.40]{AFPBook}). The subscript $_0$ denotes spaces of compactly supported functions. We recall that ${\mathcal M}(\Omega)$ is the dual space of $C_0(\Omega)$. We let $\mathcal D(\Omega):=C_0^\infty(\Omega)$ and  $\mathcal D'(\Omega)$ its dual.

\smallskip

When no ambiguity arises, we shall often make use of the simplified notation $\|v\|_{q}$, $1\leq q\leq \infty$ to indicate the Lebesgue norms of $v$; here $v$ can be either a scalar function in  $L^q(\Omega)$ or a vector field in $(L^q(\Omega))^N$ (usually indicated by ${\bf v}$). From time to time we will also use the following  notation:
$$
\int_\Omega f(x) \d x:= \int_\Omega f\,.
$$

\subsection{The space $L^{\infty}_{loc,w}((0,\tau];\mathcal{M}(\Omega))$}

For $\tau\in (0,+\infty]$ we denote by $L^{\infty}_{loc,w}((0,\tau];\mathcal{M}(\Omega))$ the set of measures $\mu\in \mathcal{M}(Q_\tau)$  for which for  a.e.~$t\in (0,\tau)$ there is a measure $\mu(\cdot,t)\in \mathcal{M}(\Omega)$  such that:

\begin{itemize}

\item[$(i)$] for all $\zeta\in C_c(Q_\tau)$ the map $t\mapsto \left\langle \mu(\cdot,t),\zeta(\cdot,t)\right\rangle_{\Omega}$ belongs to $L^1(0,\tau)$ and
\begin{equation}\label{disiU}
\left\langle \mu,\zeta\right\rangle_{Q_\tau}=\int_0^{\tau}\left\langle \mu(\cdot,t),\zeta(\cdot,t)\right\rangle_{\Omega}\,dt\,;
\end{equation}

\item[$(ii)$] the map $t\mapsto \|\mu(t)\|_{\mathcal{M}(\Omega)}$ belongs to $L^{\infty}_{loc}((0,\tau])$.
\end{itemize}
Accordingly, for $0<\overline\tau<\tau$, we use the notation
$$
\|\mu\|_{L^{\infty}_{w}([\overline\tau,\tau];\mathcal{M}(\Omega))}:=\mbox{ess}\sup_{t\in (\overline\tau,\tau)}\|\mu(t)\|_{\mathcal{M}(\Omega)}
\quad \text{for  $\mu\in L^{\infty}_{loc,w}((0,\tau];\mathcal{M}(\Omega))$\,.}
$$
Observe that by the above definition the map  $t\mapsto \left\langle \mu(\cdot,t),\rho\right\rangle_{\Omega}$
is measurable for all  $\rho\in C_c(\Omega)$, thus the map  $(0,\tau)\ni t\mapsto \mu(t)\in  \mathcal{M}(\Omega)$ is weakly* measurable.

\subsection{$TBV$-functions}

We use standard notations and concepts for $BV$ functions as in \cite{AFPBook}; in particular, for $u\in BV(\R^N)$, $\nabla u\mathcal L^N$, resp.  $D^s u$, denote  the absolutely continuous, resp. singular, parts of $Du$ with respect to the Lebesgue measure $\mathcal L^N$, {$\tilde \nabla u$ denotes the diffuse part of $Du$; i.e. $\tilde \nabla u:=\nabla u\mathcal L^N+D^c u$, with $D^c u$ is the Cantor part of $Du$,}  $J_u$ denotes its jump set. For any $BV$-function $u$, we denote $[u]:=u^+-u^-$ on $J_u$. From now on, we will always identify a $BV$-function with its precise representative.

Let
\begin{equation*}
TBV_+(\Omega) = \{u\in L^1(\Omega;[0,+\infty)): \ T_{a,\infty}^0(u)\in BV(\Omega) \ \mbox{for all } a>0 \}.
\end{equation*}
Given $u\in L^{1}_{loc}(\Omega)$, the upper and lower approximate limits of $u$ at a point $x\in\Omega$ are defined respectively as
\begin{eqnarray*}
  u^\vee(x)&:=& \inf\{t\in \R : \lim_{\rho\downarrow 0}\rho^{-N} |\{u>t\}\cap B_\rho(x)|=0\},\\
  u^\wedge(x)&:=& \sup\{t\in \R : \lim_{\rho\downarrow 0}\rho^{-N} |\{u<t\}\cap B_\rho(x)|=0\}.
\end{eqnarray*}
We let $S_u^*:=\{x\in\Omega : u^\wedge(x)<u^\vee(x)\}$ and
\begin{equation}\label{dbtv}
DTBV_+(\Omega)=\{u\in TBV_+(\Omega): \ \mathcal H^{N-1}(S_u^*)=0\}.
\end{equation}
The set of weak approximate jump points is the subset $J_u^*$ of $S_u^*$ such that there exists a unit vector $\nu_u^*(x)\in\R^N$ such that the weak approximate limit of the restriction of $u$ to the hyperplane $H^+:=\{y\in \Omega: \langle y-x,\nu_u^*(x)\rangle>0\}$ is $u^\vee(x)$ and the weak approximate limit of the restriction of $u$ to  $H^-:=\{y\in \Omega: \langle y-x,\nu_u^*(x)\rangle<0\}$ is $u^\wedge(x)$. In \cite[Page 237]{AFPBook} it is shown that for any $u\in L^1_{loc}(\Omega)$,  $J_u\subset J_u^*$. Moreover, $u^\vee(x)=\max\{u^+(x),u^-(x)\}$, $u^{\wedge}(x)=\min\{u^+(x),u^-(x)\}$ and $\nu_u^*(x)=\pm \nu_u(x)$ for any $x\in J_u$.  Furthermore, (\cite[Lemma 2.1]{GMP}  $S_u^*$ is countably $\mathcal H^{N-1}$ rectifiable and $\mathcal H^{N-1}(S_u^*\setminus J_u^*)=0$.

Finally, $TBV_+(\Omega)$ functions have a well defined trace on the boundary $\partial\Omega$ (see \cite[Lemma 5.1]{GMP}.


Given $u\in TBV_{+}(\Omega)$ we use the following notation for consistency with previous works, e.g. \cite{ACM, ACM_arma05,ACM_jde08,ACMM-JEE,ACMM-MA,ACMM-MA-E,GMP1}: 
$$
h (u, D\ell (u))=|D \Phi_{\ell}(u)|\,.
$$

\subsection{Divergence-measure vector-fields}\label{GreenAnz}

%
%
{We define the space}
$$
 X_{\mathcal{M}}(\Omega) = \left\{ \z \in L^{\infty}(\Omega; \R^N) \ :
 \, \dive \z \in {\mathcal M}(\Omega) \right\}.
$$
In \cite[Theorem 1.2]{Anz_ampa83} (see also \cite{ACMBook,ChF_arma99}), the weak trace on $\partial \Omega$ of the normal component of $\z \in X_{\mathcal{M}}(\Omega)$ is defined as a linear operator $[\cdot,\nu^\Omega]: X_{\mathcal{M}}(\Omega) \rightarrow L^\infty(\partial \Omega)$ such that
$
\Vert \, [\z,\nu^\Omega] \, \Vert_{L^\infty(\partial \Omega)} \leq \Vert \z \Vert_{\infty}
$
for all $\z \in X_{\mathcal{M}}(\Omega)$ and $[\z,\nu^\Omega]$ coincides with the point-wise trace of the normal component if $\z$ is smooth, i.e.
$$
[\z,\nu^\Omega](x) = \z(x) \cdot \nu^\Omega(x) \quad \hbox{for all} \ x \in \partial \Omega  \ \ \hbox{if} \ \z \in C^1(\overline{\Omega}, \R^m).
$$
It follows from \cite[Proposition 3.1]{ChF_arma99} or \cite[Proposition 3.4]{ACM_afst05} that
{$\dive \z$ is absolutely continuous with respect to $\mathcal H^{N-1}$.
}
Therefore, given  $\z \in X_{\mathcal{M}}(\Omega)$ and $u \in BV(\Omega)\cap L^\infty(\Omega)$, the functional $(\z,Du)\in \mathcal D'(\Omega)$ given by
\begin{equation}\label{defmeasx144}
\langle (\z,Du),\psi\rangle := - \int_{\Omega} u \, \psi \d (\dive \z) - \int_{\Omega}
u \, \z \matdot \nabla \psi \d x\,
\end{equation}
is well defined, and the following  holds (see  \cite{Caselles_jde11},  Lemma 5.1, Theorem 5.3,  Lemma 5.4, \bk and Lemma 5.6).

\begin{lemma}\label{lemmacaselles}
Let $\z \in X_{\mathcal{M}}(\Omega)$ and  $u \in BV(\Omega)\cap L^\infty(\Omega)$. Then
the functional $(\z,Du)\in \mathcal D'(\Omega)$ defined by \eqref{defmeasx144} is a Radon measure which is absolutely continuous with respect to $\vert Du \vert$. Furthermore
\begin{equation}\label{Green}
\int_{\Omega} u \d(\dive \z) + (\z, Du)(\Omega) =
\int_{\partial \Omega} [\z, \nu^\Omega] u \d\mathcal{H}^{m-1}
\end{equation}
and
\begin{equation}\label{anzellotti-caselles}
  \dive(u\z )=u\dive\z+(\z,Du)\,\quad {\rm as \ measures.}
\end{equation}
\end{lemma}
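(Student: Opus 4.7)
Since the statement is a direct adaptation of the Anzellotti pairing theory (\cite{Anz_ampa83}) to the case in which $\dive\z$ is a Radon measure (rather than an $L^p$ function), the plan is to retrace the classical construction, taking care of the fact that the problematic set on which $\dive\z$ could concentrate is controlled in dimension by the Chen--Frid absolute continuity, $\dive\z\ll\mathcal H^{N-1}$, recalled just before the statement.

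First, I would define $(\z,Du)$ on test functions $\psi\in\mathcal D(\Omega)$ by the right-hand side of \eqref{defmeasx144}. The identity makes sense because $u\in L^\infty(\Omega)\subset L^\infty(\Omega,|\dive\z|)$ (the latter inclusion uses $\dive\z\ll\mathcal H^{N-1}$: it guarantees that $\dive\z$ charges neither the jump set $J_u$ nor the Cantor set where $u$ has no pointwise meaning, so that any Borel representative of $u$ gives the same value). To show the distribution $(\z,Du)$ is a Radon measure, the idea is to approximate $u$ by standard mollifications $u_\varepsilon$, which are smooth, satisfy $u_\varepsilon\to u$ in $L^1_{\rm loc}$ and $|Du_\varepsilon|\mathcal L^N\rightharpoonup |Du|$ weakly$^*$ in measure (strict convergence on subdomains). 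On smooth functions the pairing is $\z\cdot\nabla u_\varepsilon\,\mathcal L^N$, and the integration by parts
\[
\langle(\z,Du_\varepsilon),\psi\rangle=\int_\Omega\psi\,\z\cdot\nabla u_\varepsilon\,\d x
\]
passes to the limit in the right-hand side of \eqref{defmeasx144}; the resulting bound $|\langle(\z,Du),\psi\rangle|\le \|\psi\|_\infty\|\z\|_\infty|Du|(\mathrm{supp}\,\psi)$ shows by the Riesz representation theorem that $(\z,Du)$ extends to a Radon measure, and in fact $|(\z,Du)|\le \|\z\|_\infty|Du|$, giving the absolute continuity with respect to $|Du|$.

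Next, to obtain the Green formula \eqref{Green}, I would take a sequence $\psi_k\in C^1_c(\Omega)$ with $0\le\psi_k\le1$ and $\psi_k\to 1$ in the sense that $\nabla\psi_k\,\mathcal L^N$ concentrates on $\partial\Omega$ with mass $-\nu^\Omega\,\d\mathcal H^{N-1}$ (e.g.\ $\psi_k(x)=\eta_k(\dist(x,\partial\Omega))$ with a suitable cutoff $\eta_k$). Inserting $\psi_k$ in \eqref{defmeasx144} and using the definition of the weak normal trace $[\z,\nu^\Omega]$ together with dominated convergence yields \eqref{Green}.

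Finally, to establish the product rule \eqref{anzellotti-caselles}, I would test $\dive(u\z)$ against $\psi\in\mathcal D(\Omega)$:
\[
\langle\dive(u\z),\psi\rangle=-\int_\Omega u\,\z\cdot\nabla\psi\,\d x;
\]
comparing with \eqref{defmeasx144} produces exactly $\int_\Omega u\,\psi\,\d(\dive\z)+\langle(\z,Du),\psi\rangle$, which is the identity claimed. The principal technical hurdle throughout is the passage to the limit in the pairing on sets where $Du$ has singular part: absolute continuity with respect to $\mathcal H^{N-1}$ of $\dive\z$ is exactly the structural ingredient that allows the precise representative of $u$ to be paired with $\dive\z$ unambiguously, and is what distinguishes this statement from the classical $L^p$ framework.
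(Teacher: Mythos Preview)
The paper does not give its own proof of this lemma: it is stated with a citation to \cite{Caselles_jde11} (Lemma 5.1, Theorem 5.3, Lemma 5.4, and Lemma 5.6), and used as a black box thereafter. Your sketch follows precisely the standard Anzellotti--Chen--Frid--Caselles route that those references carry out, so there is nothing to compare against in the paper itself; your plan is the expected one.

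One technical slip worth flagging: you write that $\dive\z\ll\mathcal H^{N-1}$ ``guarantees that $\dive\z$ charges neither the jump set $J_u$ nor the Cantor set,'' so that any Borel representative of $u$ gives the same integral. This is not right. The jump set $J_u$ is countably $\mathcal H^{N-1}$-rectifiable and typically has \emph{positive} $\mathcal H^{N-1}$ measure, so $\dive\z$ may well charge it. What the absolute continuity does guarantee is that $\dive\z$ does not charge the $\mathcal H^{N-1}$-null set $S_u\setminus J_u$ on which $u$ has no approximate limits at all; on $J_u$ one must fix a specific representative (the precise representative $\tfrac12(u^++u^-)$ is the convention in \cite{Caselles_jde11}), and the value of $\int_\Omega u\,\psi\,\d(\dive\z)$ genuinely depends on that choice. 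This does not derail your argument---the mollification step, the Radon-measure bound $|(\z,Du)|\le\|\z\|_\infty|Du|$, the boundary-layer limit for \eqref{Green}, and the algebraic derivation of \eqref{anzellotti-caselles} all go through---but the justification for why \eqref{defmeasx144} is well-defined needs to be rephrased.
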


%
\section{Entropy solution to the Dirichlet problem}

{
Let
}
$\tau\in (0,+\infty]$. In this Section we consider the following problem:
\begin{equation}
  \label{pbparab}
  \left\{\begin{array}
    {ll} u_t=\dys \dive\left(u^m\frac{\nabla u}{|\nabla u|}\right) & {\rm in \ }{Q_{\rm \tau}:=(0,{\rm \tau})\times\Omega}
        \\[1ex]
        u(0,x)=u_0 & {\rm in \ } \Omega
    \\
    u=g & {\rm on \ } {S_{\rm \tau}:=(0,{\rm \tau})\times\partial\Omega}
  \end{array}\right.
\end{equation}

\subsection{Definition of entropy solution}
\label{S-sol}

{A solution to problem \eqref{pbparab} is defined as follows.}

\begin{definition}\label{def-sol}
Let $u_0\in L^\infty_+(\Omega)$, $g\in L^\infty_+(\partial \Omega)$, {and ${\rm \tau}< +\infty$}. A nonnegative function  $u\in C([0,{\rm \tau});$$L^1(\Omega))\cap L^\infty((0,{\rm \tau})\times\Omega)$ is an entropy solution {to \eqref{pbparab} in $Q_\tau$}
if:
\begin{itemize}
\item[$(i)$] $\ell(u)\in L^1_{}((0,{\rm \tau}); BV(\Omega))$ for all $\ell\in\mathcal L$;
\item[$(ii)$]  $u_{t}\in L^{\infty}_{ loc,w} ((0,{\rm \tau}], \mathcal M (\Omega))$;
\item[$(iii)$] there exists $\w\in L^{\infty}((0,{\rm \tau})\times\Omega)$ such that $\|\w\|_{\infty}\leq 1$ and that $\z:=\vp (u)\w$ satisfies
{
 \begin{equation}\label{dist}
   u_t(t) =\dive \z(t) \quad\mbox{as distributions for a.e.\ $t\in (0,{\rm \tau})$;}
   \end{equation}
}
\item[$(iv)$]  the {e}ntropy inequality
\begin{equation}\label{main-ineq}
\int_{0}^{{\rm \tau}}\into \psi\ \d h (u, D\ell (u)) \leq  \int_{0}^{{\rm \tau}} \into J_{\ell}(u) \psi_{t}  - \intt\into \ell(u)\z\cdot\nabla \psi
\end{equation}
holds for any $\ell\in\mathcal L$ and any nonnegative $\psi \in C^\infty_c((0,{\rm \tau})\times\Omega)$;
\item[$(v)_{}$] for {a.e.} $t\in (0,{\rm \tau})$,  \begin{equation}\label{boundconddu>g}
\mbox{$u (t)\ge g$ \ \ $\mathcal H^{N-1}$-a.e. \ \ on $\partial \Omega$}\,,
\end{equation}
\begin{equation}\label{boundcondd=}  [\z (t),\nu^\Omega] =
-\varphi(u(t)) \ \  {\rm if \ } {u (t) > g} \quad\mbox{$\mathcal H^{N-1}$-a.e. on $\partial\Omega$;}
\end{equation}
\item[$(vi)$] $u(0)=u_0\ \ \mbox{in $L^{1}(\Omega)$}\,.$
\end{itemize}
{A nonnegative function $u$ is an entropy solution to \eqref{pbparab} in $Q:=Q_{+\infty}$ if it is an entropy solution to \eqref{pbparab} in $Q_{\rm \tau}$ for all ${\rm \tau}$.}
\end{definition}

\begin{remark}\label{here} {The normal trace of $\z$ in \eqref{boundcondd=}} makes sense since $\dive \z(t)\in \mathcal M (\Omega)$ for a.e. $0< t< {\rm \tau}$.  Moreover, as $\ell(u)\in L^1_{}([0,{\rm \tau}); BV(\Omega))$, the trace of  {$u(t)$ on $\partial \Omega$}
is well defined for a. e.  $t\in (0,{\rm \tau})$, see \cite[Lemma 5.1]{GMP}. The regularity of $u_t$ stated in $(ii)$ naturally arises from the homogeneity of the operator (see \eqref{ut_measure}; see also Remark \ref{remark3.14}). {For a discussion on the form of the Dirichlet boundary condition in $(v)$, we refer to the introduction of \cite{GMP}.}
\end{remark}

We now give a definition of subsolution to {problem \eqref{pbparab}, consistent with those previously given in literature (see e.g.\ \cite{GMP1,GMP} and references therein).}

\begin{definition}\label{def-sub}
Let $u_0\in L^\infty_+(\Omega)$, $g\in L^\infty_+(\partial \Omega)${, and ${\rm \tau}\in (0,+\infty)$. A nonnegative function} $u\in C([0,{\rm \tau});$$L^1(\Omega))\cap L^\infty((0,{\rm \tau})\times\Omega)$ is an entropy subsolution to
{\eqref{pbparab} in $Q_{\rm \tau}$ if $(i)$, $(ii)$, and $(iv)$ in Definition \ref{def-sol} hold, whilst $(iii)$, $(v)$, and $(vi)$ are replaced by:}
\begin{itemize}
{
\item[$(iii)_{sub}$] There exists $\w\in L^{\infty}((0,{\rm \tau})\times\Omega)$ such that $\|\w\|_{\infty}\leq 1$ and that $\z:=\vp (u)\w$ satisfies
 \begin{equation}\label{dist<=}
   u_t(t) {\le} \dive \z(t) \quad\mbox{as distributions in $\Omega$ for a.e.\ $t\in (0,{\rm \tau})$;}
   \end{equation}
}
\item[$(v)_{sub}$]  for {a.e.} $t\in (0,{\rm \tau})$,
\begin{equation}\label{boundcondd<=}
[\z (t),\nu^\Omega] = -\varphi(u(t)) \ \  {\rm if \ } {u (t) > g} \quad\mbox{$\mathcal H^{N-1}$-a.e. on $\partial\Omega$;}
\end{equation}
\item[$(vi)_{sub}$] $u(0)\leq u_0\ \ \mbox{in $L^{1}(\Omega)$}\,.$
\end{itemize}
{A nonnegative function $u$ is an entropy subsolution to \eqref{pbparab} in $Q:=Q_{+\infty}$ if it is an entropy subsolution to \eqref{pbparab} in $Q_{\rm \tau}$ for all ${\rm \tau}$.}

\end{definition}


\subsection{Existence}

In this subsection we will prove the following result.
\begin{theorem}\label{thm-exi}
 For any $u_0\in L^\infty_+(\Omega)$ and $g\in L_+^\infty(\partial \Omega)$ there exists an {e}ntropy solution of \eqref{pbparab} {in $Q$} in the sense of Definition \ref{def-sol}.
\end{theorem}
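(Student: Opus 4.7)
The plan is to construct the entropy solution via nonlinear semigroup theory, then identify its Cauchy problem as an entropy solution. I proceed in four main steps.

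\medskip

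\textbf{Step 1: Accretivity and mild solutions.} I would first use the elliptic theory of \cite{GMP} (Theorem \ref{thm-elliptic}) to define, for each $g\in L^\infty_+(\partial\Omega)$, the (multivalued) operator $\mathcal{A}_g\subset L^1(\Omega)\times L^1(\Omega)$ whose graph consists of pairs $(u,v)$ such that $u$ is an entropy solution to $u-v = \dive(u^m\nabla u/|\nabla u|)$ in $\Omega$ with boundary datum $g$. The $L^1$-contraction in \cite{GMP} implies that $\mathcal{A}_g$ is $T$-accretive in $L^1(\Omega)$, and the solvability of the resolvent equation for every right-hand side in $L^\infty_+(\Omega)$ (and hence, by approximation and contraction, for every $f\in L^1_+(\Omega)$) yields the range condition $\overline{R(I+\lambda\mathcal{A}_g)}\supset L^1_+(\Omega)$ for all $\lambda>0$; i.e., $\mathcal{A}_g$ is $m$-accretive on the positive cone. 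By the Crandall–Liggett theorem we get a mild solution $u\in C([0,\tau);L^1(\Omega))$, obtained as the $L^1$-uniform limit of the implicit Euler scheme $u_h^n - h\,\dive(\,(u_h^n)^m\nabla u_h^n/|\nabla u_h^n|) = u_h^{n-1}$ with $u_h^0$ approximating $u_0$ and boundary data $g$. The $L^\infty$-bound $\|u_h^n\|_\infty \leq \max\{\|u_0\|_\infty,\|g\|_\infty\}$ (consequence of comparison with constant supersolutions) passes to the limit and yields $u\in L^\infty((0,\tau)\times\Omega)$.

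\medskip

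\textbf{Step 2: Extraction of the vector field and the homogeneity bound on $u_t$.} For each step of the scheme, the elliptic theory provides a field $\z_h^n = (u_h^n)^m \w_h^n$ with $\|\w_h^n\|_\infty\leq 1$ such that $(u_h^n-u_h^{n-1})/h = \dive \z_h^n$ as distributions, together with the corresponding entropy inequality and boundary conditions. Interpolating piecewise-constantly in time produces $u_h, \z_h$, and a crucial observation, exploiting the 0-homogeneity of $\z\mapsto\varphi(u)\nabla u/|\nabla u|$ in $\nabla u$, is the $L^1$-estimate
\begin{equation*}
\|u_h(t_2)-u_h(t_1)\|_{L^1(\Omega)} \leq C\,|t_2-t_1|,\qquad \text{with $C$ depending on $\overline\tau>0$ for $t_i\geq\overline\tau$,}
\end{equation*}
which is the standard consequence of accretivity plus the scaling argument $u\mapsto \lambda u$ leaving the operator essentially invariant (the "homogeneity trick" behind \eqref{ut_measure}); this yields $u_t\in L^\infty_{loc,w}((0,\tau];\mathcal{M}(\Omega))$, giving $(ii)$. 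Since $\|\z_h\|_\infty\leq \|u\|_\infty^m$, along a subsequence $\z_h\rightharpoonup \z$ weakly-$*$ in $L^\infty$; passing to the limit in the distributional identity produces \eqref{dist} in $(iii)$ with $\z=\varphi(u)\w$ for some $\w$ with $\|\w\|_\infty\leq 1$ (the latter identification uses that in the scheme $\z_h^n/\varphi(u_h^n)$ has modulus $\leq 1$ a.e., combined with strong $L^1$-convergence of $u_h$).

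\medskip

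\textbf{Step 3: Entropy inequality and BV-regularity of truncations.} From the elliptic entropy inequality applied to each step (multiplying by $\ell(u_h^n)$-type test functions and using Anzellotti's pairing via Lemma \ref{lemmacaselles}), I obtain discrete versions of \eqref{main-ineq}. The BV-control provided by the elliptic theory on $\ell(u_h^n)$, time-summed and bounded uniformly in $h$, yields $\ell(u)\in L^1((0,\tau);BV(\Omega))$, i.e.\ $(i)$. The passage to the limit in
\begin{equation*}
\int_0^\tau\!\!\int_\Omega \psi\,\mathrm{d} h(u_h, D\ell(u_h)) \leq \int_0^\tau\!\!\int_\Omega J_\ell(u_h)\,\partial_t\psi - \int_0^\tau\!\!\int_\Omega \ell(u_h)\,\z_h\cdot\nabla\psi
\end{equation*}
uses strong $L^1$-convergence of $u_h$, weak-$*$ convergence of $\z_h$, and lower semicontinuity of the total variation functional $v\mapsto \int\psi\,\mathrm{d}|D\Phi_\ell(v)|$ on $BV$. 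This lower semicontinuity step is the main technical obstacle: because $\Phi_\ell$ is nonlinear and $u$ may have jumps where the $BV$ chain rule must be applied carefully, I would follow the argument in \cite{ACMM-JEE,GMP} and use the explicit form $h(u,D\ell(u)) = |D\Phi_\ell(u)|$ from Section 2 to reduce this to a standard Reshetnyak-type lower semicontinuity.

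\medskip

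\textbf{Step 4: Boundary and initial conditions.} The initial condition $(vi)$ follows from $u\in C([0,\tau);L^1(\Omega))$ and $u_h(0)\to u_0$ in $L^1(\Omega)$. For the boundary conditions $(v)$, at each Euler step \cite{GMP} provides \eqref{boundconddu>g} and \eqref{boundcondd=} $\mathcal H^{N-1}$-a.e.\ on $\partial\Omega$. Passing to the limit requires the traces of $u_h(t)$ on $\partial\Omega$ to converge in an appropriate sense: I would use the uniform BV-in-space estimate on truncations $T_{a,b}^0(u_h)$, Fatou/lower semicontinuity for the inequality $u(t)\geq g$, and the $L^\infty$-weak-$*$ convergence of normal traces $[\z_h(t),\nu^\Omega]$ (guaranteed by the bound $\|[\z,\nu^\Omega]\|_\infty\leq \|\z\|_\infty$ together with convergence of $\dive\z_h(t)$) to recover the Dirichlet-type condition \eqref{boundcondd=} for a.e.\ $t$. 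This concludes that the limit $u$ verifies all of $(i)$–$(vi)$, hence is an entropy solution in $Q_\tau$; since $\tau$ was arbitrary, $u$ is an entropy solution in $Q$.
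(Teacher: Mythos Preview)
Your Steps 1--3 match the paper's argument closely: the paper defines the operator $A_g$ from the elliptic theory, verifies accretivity, density of domain, and the range condition, invokes Crandall--Liggett to produce the mild solution, and then passes to the limit in the Euler scheme exactly as you outline (including the Benilan--Crandall homogeneity estimate for $u_t$, the weak-$*$ extraction of $\z=\varphi(u)\w$, the BV bound on $\ell(u)$, and lower semicontinuity for the entropy inequality).

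Step 4 contains a genuine gap. Neither the inequality $u(t)\ge g$ nor the condition $[\z(t),\nu^\Omega]=-\varphi(u(t))$ on $\{u(t)>g\}$ can be obtained by a direct limit of traces. Traces on $\partial\Omega$ are not continuous under the available convergences: $u_h\to u$ in $L^1(\Omega)$ does not imply convergence of boundary traces (even with uniform BV bounds on truncations, only weak trace control survives), and weak-$*$ convergence of $[\z_h,\nu^\Omega]$ in $L^\infty(\partial\Omega)$ cannot transfer a pointwise equality that holds on the moving sets $\{u_h>g\}$. Fatou-type arguments are also unavailable here because you do not have pointwise (or $L^1$) convergence of the boundary values.

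The paper circumvents this as follows. At the discrete level, the elliptic boundary condition is used in the equivalent form
\[
|\Phi_\ell(g)-\Phi_\ell(u^{n+1})|\le (\ell(g)-\ell(u^{n+1}))[\z^{n+1},\nu^\Omega],
\]
which is plugged into the entropy inequality with test functions $\psi\in C^\infty_c((0,\tau)\times\overline\Omega)$; this produces an \emph{augmented} entropy inequality containing the boundary term $\int_0^\tau\!\int_{\partial\Omega}\psi\,|\Phi_\ell(u_k)-\Phi_\ell(g)|$ on the left-hand side. That term survives the limit $k\to\infty$ by lower semicontinuity (no trace convergence needed). From the limiting augmented inequality, one then separates the interior and boundary parts as mutually singular measures, recovering
\[
|\Phi_\ell(u(t))-\Phi_\ell(g)|\le (\ell(g)-\ell(u(t)))[\z(t),\nu^\Omega]\quad\mathcal H^{N-1}\text{-a.e.\ on }\partial\Omega,
\]
and finally an elementary algebraic argument (as in \cite[Lemma 5.8]{GMP}) converts this family of inequalities over $\ell\in\mathcal L$ into the two conditions in $(v)$. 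You should replace your Step 4 with this route.
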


We consider the resolvent equation

\begin{equation}
  \label{resolvent}
  \left\{\begin{array}
    {cc} \displaystyle u-f= \dive\left(u^m\frac{\nabla u}{|\nabla u|}\right) & {\rm in \ }\Omega
    \\ u=g & {\rm on \ } \partial\Omega\,.
  \end{array}\right.
\end{equation}

In \cite[{Theorem 5.6 and 5.11}]{GMP} we obtained the following existence and uniqueness result for solutions to \eqref{resolvent}. Recall that $DTBV_+(\Omega)$ is defined in \eqref{dbtv}.

  \begin{theorem}\label{thm-elliptic}
Given $f\in L_+^\infty(\Omega)$ and $g\in L^\infty_+(\partial \Omega)$, there exists a unique {solution $u$ {to} \eqref{resolvent} in the following sense:} $u\in {DTBV_+(\Omega)}\cap L^\infty(\Omega)$, there exists $\w\in L^\infty(\Omega;\rn)$ with $\|\w\|_{\infty}\leq 1$ such that
\begin{equation}\label{resolvent_id}
u-f=\dive \z \quad\mbox{in $\mathcal D'(\Omega)$}, \quad \z:=u^m\w,
\end{equation}
\begin{equation}\label{identify-z_ell}
 \left|D \Phi({T_{a,b}^0}(u))\right| = (\z,D T_{a,b}^0(u)) \quad \mbox{as measures for a.e. } 0<a<b\leq +\infty,
\end{equation}
and
\begin{equation}\label{boundconddu>g1}
 u\geq g \ \   \mathcal H^{N-1}-{\rm a.e. \ on \ } \partial\Omega,
\end{equation}
\begin{equation}\label{boundconddu>g2}
 [\z,\nu^\Omega ]=-\varphi(u)  \  {\rm if \ } u>g \ \ \ \mathcal H^{N-1}-{\rm a.e. \ on \ } \partial\Omega.
\end{equation}
In addition, if $\tilde u\in {DTBV_+(\Omega)}\cap L^\infty(\Omega)$ is {the} solution corresponding to $\tilde f\in L^\infty_+(\Omega)$ {and $g\in L^\infty_+(\partial \Omega)$}, then
\begin{equation}
  \label{comp-elliptic}\int_\Omega { (u-\tilde u)^+} \leq \int_\Omega { (f-\tilde f)^+}.
\end{equation}

\end{theorem}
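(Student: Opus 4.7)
\medskip
\noindent\textbf{Proof plan.}
The plan is to construct the solution by a two-parameter elliptic regularization, extract a limit in the $DTBV_+$ topology, then identify the limit vector field $\z$ via the Anzellotti--Caselles calculus of Lemma~\ref{lemmacaselles}, and finally derive the $L^1$-contraction \eqref{comp-elliptic} by an entropy/doubling argument. Concretely, for $\epsilon>0$, I would first solve the smooth, coercive problem
\begin{equation*}
u_\epsilon - f \;=\; \dive\!\left(\big(\epsilon+u_\epsilon^m\big)\,\frac{\nabla u_\epsilon}{\sqrt{\epsilon^{2}+|\nabla u_\epsilon|^{2}}}\right)\quad\text{in }\Omega,\qquad u_\epsilon=g\text{ on }\partial\Omega,
\end{equation*}
whose existence follows from standard monotone-operator theory (the flux is the gradient of a smooth strictly convex integrand in $\nabla u$ once $u$ is frozen, and the full problem is obtained by a fixed-point/pseudo-monotonicity argument).

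The next step is a set of $\epsilon$-uniform bounds. An $L^\infty$ bound $0\le u_\epsilon\le \max\{\|f\|_\infty,\|g\|_\infty\}$ follows by testing with $(u_\epsilon-M)^+$ (and with $(g-u_\epsilon)^+$ extended inside $\Omega$ to produce the one-sided boundary bound $u\ge g$ in the limit). More importantly, testing the regularised equation with $\Phi_\ell(u_\epsilon)$ for $\ell\in\mathcal{L}$, and using $\ell'\ge 0$, furnishes a uniform bound on $\int_\Omega |\nabla\Phi_\ell(u_\epsilon)|$, which by lower semicontinuity gives $T_{a,\infty}^0(u)\in BV(\Omega)$ for every $a>0$, hence $u\in TBV_+(\Omega)$. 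The Lipschitz truncations $T_{a,b}^0(u_\epsilon)$ also enjoy quantitative BV bounds, so a diagonal subsequence converges strongly in $L^1$ to some $u\in TBV_+(\Omega)\cap L^\infty(\Omega)$; the fact that $u$ has zero jump part before truncation (i.e.\ $u\in DTBV_+(\Omega)$) is deduced by combining the $\ell$-entropy estimate with the characterisation of $S_u^*$ recalled in Section~\ref{ss-not}. Meanwhile, the bounded fields $\w_\epsilon:=\nabla u_\epsilon/\sqrt{\epsilon^{2}+|\nabla u_\epsilon|^{2}}$ converge weakly-$\ast$ in $L^\infty(\Omega;\R^N)$ to some $\w$ with $\|\w\|_\infty\le 1$, and the product $\z_\epsilon:=(\epsilon+u_\epsilon^m)\w_\epsilon$ converges weakly-$\ast$ to $\z=u^m\w\in X_{\mathcal M}(\Omega)$, yielding \eqref{resolvent_id}.

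The core of the argument is the identification \eqref{identify-z_ell}. The inequality $(\z,DT_{a,b}^0(u))\le |D\Phi(T_{a,b}^0(u))|$ as measures follows from $\|\w\|_\infty\le 1$ together with the definition of the Anzellotti pairing and the chain rule in $BV$. For the reverse inequality, I would test the $\epsilon$-equation with $\Phi_{\ell}(u_\epsilon)\chi_\eta$ for a smooth cut-off $\chi_\eta$ supported in $\Omega$, integrate by parts to pair $\z_\epsilon$ with $\nabla\Phi_{\ell}(u_\epsilon)=\varphi'(u_\epsilon)\ell'(u_\epsilon)\nabla u_\epsilon$, let $\epsilon\downarrow 0$ using the Green formula \eqref{Green} and \eqref{anzellotti-caselles} on $(\z,D T_{a,b}^0(u))$, and finally choose $\ell=T_{a,b}^0$. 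The limit equation then reads $\int_{\Omega'}(\z,DT_{a,b}^0(u))=\int_{\Omega'}|D\Phi(T_{a,b}^0(u))|$ on every compactly contained $\Omega'$, which upgrades to equality of measures because one measure dominates the other. The boundary identity \eqref{boundconddu>g2} is then recovered by applying \eqref{Green} globally in $\Omega$ and comparing with the $\epsilon$-level trace identity $[\z_\epsilon,\nu^\Omega]=-\varphi(g)$ on $\{u_\epsilon=g\}$, plus a standard argument showing that on $\{u>g\}$ the trace saturates to $-\varphi(u)$ by the $L^\infty$-norm bound on $[\z,\nu^\Omega]$.

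The $L^1$-contraction \eqref{comp-elliptic} I would obtain by a Kato-type inequality: subtract the resolvent equations for $u$ and $\tilde u$, pair with a regularised $\sign^+$ of $u-\tilde u$, and use that on $\{u>\tilde u\}$ the pairing $(\z-\tilde\z,D(u-\tilde u)^+)$ is a nonnegative measure by the joint alignment granted by \eqref{identify-z_ell} applied to both $u$ and $\tilde u$, while the boundary contribution is nonpositive thanks to $u,\tilde u\ge g$ and \eqref{boundconddu>g2}. Uniqueness is an immediate consequence. The main obstacle is the identification \eqref{identify-z_ell}: it is exactly the rigidity statement that $\w$ is the ``subdifferential direction'' of the total-variation part of the flux weighted by $u^m$, and its proof on the full truncation range $0<a<b\le+\infty$, including the degenerate regime $b=+\infty$, requires the simultaneous control of boundary traces and bulk identification that the Anzellotti--Caselles machinery of Lemma~\ref{lemmacaselles} is designed to handle.
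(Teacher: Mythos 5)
You should first note that this paper does not prove Theorem \ref{thm-elliptic}: the statement is imported wholesale from the companion paper \cite[Theorems 5.6 and 5.11]{GMP} (``In \cite{GMP} we obtained the following existence and uniqueness result for solutions to \eqref{resolvent}''), so there is no internal proof to compare your argument against. Your sketch follows the standard Andreu--Caselles--Maz\'on elliptic-regularization strategy, which is the right family of techniques for this class of flux-limited equations; but the two steps you treat as routine are exactly the hard points of the result, and as written they do not go through.

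First, membership in $DTBV_+(\Omega)$, i.e.\ $\mathcal H^{N-1}(S_u^*)=0$, is a genuine regularity assertion: uniform bounds on $\int_\Omega|\nabla\Phi_\ell(u_\epsilon)|$ only yield $T^0_{a,\infty}(u)\in BV(\Omega)$, hence that $S_u^*$ is countably $\mathcal H^{N-1}$-rectifiable --- not that it is $\mathcal H^{N-1}$-null. Solutions of the closely related relativistic equations (and of the parabolic problem in this very paper) do develop jumps, so nothing soft excludes them here; one must exploit the stationarity of \eqref{resolvent} and a Rankine--Hugoniot-type analysis at a putative jump. Relatedly, your ``easy'' inequality $(\z,DT^0_{a,b}(u))\le|D\Phi(T^0_{a,b}(u))|$ is not a consequence of $\|\w\|_\infty\le1$ plus the chain rule: on a jump the right-hand side has $\mathcal H^{N-1}$-density $\int_{u^\wedge}^{u^\vee}\varphi(s)\,\d s$ (after truncation), which is strictly smaller than $\varphi(u^\vee)\,[u]$, whereas the crude bound on the Anzellotti pairing is $\varphi(\|u\|_\infty)\,|DT^0_{a,b}(u)|$; so even the ``easy'' direction requires the fine trace analysis of \cite{Caselles_jde11}, and invoking it to then conclude $u\in DTBV_+(\Omega)$ risks circularity. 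Second, the contraction \eqref{comp-elliptic}: $u-\tilde u$ is in general not $BV$ (only truncations are), so the pairing $(\z-\tilde\z,D(u-\tilde u)^+)$ is not defined; and even on truncations, \eqref{identify-z_ell} controls only the diagonal pairings $(\z,Du)$ and $(\tilde\z,D\tilde u)$, while the cross terms $(\z,D\tilde u)$ and $(\tilde\z,Du)$ carry no sign. The argument that actually closes this is a Kruzhkov doubling of variables on the truncations $T_{a,b}$ with the auxiliary functions $R_{\eps,l}$ and $S_{\eps,l}$ --- precisely the scheme this paper carries out in the parabolic setting in the proof of Theorem \ref{T-sub} --- where the monotonicity of $s\mapsto s^m$ is what absorbs the cross terms. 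Without these two ingredients your proposal is a plan rather than a proof.
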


\medskip

{The solution $u$ in Theorem \ref{thm-elliptic} {satisfies} the following additional properties:}

\begin{proposition} Let $f\in L_+^\infty(\Omega)$ and $g\in L^\infty_+(\partial \Omega)$. Let $u$ be the unique solution to \eqref{resolvent} as given in Theorem \ref{thm-elliptic}. Then
\begin{equation}
\label{boundness}
0\leq u\leq M:=\max\{\|f\|_\infty,\|g\|_\infty\}
\end{equation}
and for any $\ell\in\mathcal L$, it holds:
\begin{equation}\label{identify-w}
\left|D \Phi_\ell(u)\right| = (\z,D \ell(u)) \quad \mbox{as measures};
\end{equation}
\begin{equation}
  \label{boundcondelliptic} |\Phi_\ell(g)-\Phi_\ell(u)|\leq (\ell(g)-\ell(u))[\z,\nu^\Omega]\quad \mbox{$\mathcal H^{N-1}$-a.e. on  $\partial\Omega$.}
\end{equation}
\end{proposition}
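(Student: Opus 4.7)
My plan is to establish the three claims in turn, using Anzellotti's Green formula, the identification \eqref{identify-z_ell} furnished by Theorem \ref{thm-elliptic}, and monotonicity of $\varphi$.

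For the $L^\infty$-bound \eqref{boundness}, the inequality $u\ge 0$ is built into $u\in DTBV_+(\Omega)$. For the upper bound I would test $u-f=\dive\z$ against $(u-M)^+=T_{M,\infty}^M(u)\in BV(\Omega)\cap L^\infty(\Omega)$ and invoke \eqref{Green}:
\[
\int_\Omega (u-f)(u-M)^+\,dx = -(\z,D(u-M)^+)(\Omega) + \int_{\partial\Omega}[\z,\nu^\Omega](u-M)^+\,d\mathcal{H}^{N-1}.
\]
The volume term on the right is nonpositive since, with $\Phi(s):=\int_0^s\varphi(\sigma)\,d\sigma$, one has $(\z,D(u-M)^+)=(\z,DT_{M,\infty}^0(u))=|D\Phi(T_{M,\infty}^0(u))|$ by \eqref{identify-z_ell} (approaching $M$ from above through admissible values if needed, as the identification holds for a.e.\ $a$). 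On $\{(u-M)^+>0\}$ one has $u>M\ge\|g\|_\infty\ge g$, so \eqref{boundconddu>g2} gives $[\z,\nu^\Omega]=-u^m\le 0$, making the boundary term nonpositive. On the same set $u>M\ge\|f\|_\infty\ge f$, whence $(u-f)(u-M)^+\ge((u-M)^+)^2$. Consequently $\int_\Omega ((u-M)^+)^2\,dx\le 0$, i.e.\ $u\le M$ a.e.

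For \eqref{identify-w} I would reduce to the truncation case \eqref{identify-z_ell} by linearity and density. For the building block $\ell=T_{a,b}^a\in\mathcal L$ one has $D\ell(u)=DT_{a,b}^0(u)$ (constants are inert in the Anzellotti pairing by \eqref{defmeasx144}) and, by a direct computation, $\Phi_\ell=\Phi\circ T_{a,b}^0-\Phi(a)$, so \eqref{identify-w} reduces to \eqref{identify-z_ell} for this $\ell$. For a general $\ell\in\mathcal L$, only the restriction of $\ell'$ to $[0,\|u\|_\infty]$ is relevant, and by the definition of $\mathcal L$ this restriction is supported in $[a,\|u\|_\infty]$ for some $a>0$. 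Approximate $\ell'$ in $L^1([a,\|u\|_\infty])$ by nonnegative step functions to produce $\ell_n$, a finite nonnegative linear combination of shifted truncations $T_{a_i,b_i}^{a_i}\in\mathcal L$ converging uniformly to $\ell$ on $[0,\|u\|_\infty]$. Both sides of \eqref{identify-w} are linear in such combinations: the pairing $(\z,D\ell(u))$ via \eqref{defmeasx144}, and $|D\Phi_\ell(u)|$ because every building-block measure $D\Phi_{T_{a_i,b_i}^{a_i}}(u)$ has the same polarity as $Du$ (being a nonnegative density $\mathbf{1}_{(a_i,b_i)}(u)\varphi(u)$ times $Du$ in a chain-rule sense), so the total variations add. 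Hence \eqref{identify-w} holds for every $\ell_n$, and the uniform convergences $\ell_n(u)\to\ell(u)$ and $\Phi_{\ell_n}(u)\to\Phi_\ell(u)$ allow one to pass to the limit both in \eqref{defmeasx144} and in the total variation. I expect this density-and-linearity argument to be the main technical obstacle, chiefly in verifying that $|D\Phi_{\ell_n}(u)|\to|D\Phi_\ell(u)|$ as positive measures rather than only lower-semicontinuously; exploiting the explicit linear structure, rather than a direct chain rule for the Anzellotti pairing, is the reason to organize the argument through truncations.

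Finally, \eqref{boundcondelliptic} is a pointwise analytic consequence of \eqref{boundconddu>g1}--\eqref{boundconddu>g2} together with \eqref{boundness}. On $\{u=g\}$ both sides of \eqref{boundcondelliptic} vanish. On $\{u>g\}$, \eqref{boundconddu>g2} gives $[\z,\nu^\Omega]=-u^m$, so the right-hand side equals $u^m(\ell(u)-\ell(g))=\int_g^u\ell'(\sigma)u^m\,d\sigma$, while, by \eqref{boundconddu>g1} and monotonicity of $\ell$ and $\Phi_\ell$, the left-hand side is $\Phi_\ell(u)-\Phi_\ell(g)=\int_g^u\ell'(\sigma)\sigma^m\,d\sigma$. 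Since $\sigma^m\le u^m$ on $(g,u)$ by monotonicity of $\varphi$, the inequality \eqref{boundcondelliptic} follows.
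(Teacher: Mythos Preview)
Your arguments for \eqref{boundness} and \eqref{boundcondelliptic} are correct; the paper simply cites \cite[formula (5.19)]{GMP} for the former and gives the same pointwise computation as yours for the latter.

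For \eqref{identify-w} your route differs from the paper's and is more roundabout. The paper does not approximate: it observes that since $\supp\ell'\subset[a,+\infty)$ and $u\le M$, one has $\ell(u)=\ell(T_{a,M}^0(u))$, and then applies a chain rule for the Anzellotti pairing (\cite[Lemma 2.3]{GMP}, valid precisely because $u\in DTBV_+(\Omega)$ has no jump part) to write
\[
(\z,D\ell(u))=\ell'(u)\,(\z,DT_{a,M}^0(u))\stackrel{\eqref{identify-z_ell}}{=}\ell'(u)\,|D\Phi(T_{a,M}^0(u))|=|D\Phi_\ell(u)|,
\]
the last equality being the ordinary $BV$ chain rule (again using the absence of jumps). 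Your ``same polarity'' observation---that each building block $D\Phi_{T_{a_i,b_i}^{a_i}}(u)$ is a nonnegative density against $Du$---is precisely this chain rule in disguise; once you have it, the identity \eqref{identify-w} follows for every $\ell\in\mathcal L$ directly, and the step-function approximation, together with the delicate passage to the limit in $|D\Phi_{\ell_n}(u)|$ that you flag as the main obstacle, becomes unnecessary. In short, the tool you invoke to justify additivity of total variations already gives the full result, so the density argument is a detour you can drop.
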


\begin{proof}
 {The bound \eqref{boundness} follows from \cite[formula (5.19)]{GMP}. For $\ell\in \mathcal L$,}
let $a>0$ be such that supp $\ell\subset [a,+\infty[$. Then $\ell(u)=\ell(T_{a,M}^0(u))$ with $M$ as given in \eqref{boundness}.
Since $u\in DTBV_+(\Omega)$, we have
\begin{eqnarray*}
(\z,D\ell (u)) &= & (\z, D\ell (T_{a,M}^0(u)))\stackrel{\text{\cite[Lemma 2.3]{GMP}}}{=}\ell'(u)(\z, DT_{a,M}^0(u))
\\ &\stackrel{\eqref{identify-z_ell}}{=}& \ell'(u) |D\Phi(T_{a,M}^0(u))| {=}|D\Phi_{\ell} (T_{a,M}^0(u))|=|D\Phi_{\ell} (u)|\,,
\end{eqnarray*}
{where in the last but one equality we used the chain rule for BV functions.}
{Inequality} \eqref{boundcondelliptic} follows directly from \eqref{boundconddu>g1} and \eqref{boundconddu>g2}; indeed, at those points  where  $u>g$ we have
$$
  [\z,\nu^\Omega](\ell (g)-\ell (u))\stackrel{\eqref{boundconddu>g2}}{=} \varphi(u)(\ell (u)-\ell (g)) = \varphi(u)\int_g^u\ell'(s)\d s  \geq  \int_g^u\ell'(s) \varphi(s)\d s\,.
$$
\end{proof}

In order to prove Theorem \ref{thm-exi}, we associate an operator in $L^1(\Omega)$ to {the following elliptic problem:
\begin{equation}
  \label{elliptic problem} \left\{\begin{array}{cc} -v=\dive\left(u^m\frac{\nabla u}{|\nabla u|}\right) & {\rm in \ }\Omega
  \\ u=g & {\rm on \ }\partial\Omega.\end{array}\right.
\end{equation}
}

{ \begin{definition} Given $g\in L_+^\infty(\partial \Omega)$, we define $B_g$ by:
$$
(u, v) \in  B_g \ \iff \
\left\{
\begin{array}{l}
u \in TBV_+(\Omega) \cap L^{\infty}(\Omega), \ v \in L^{\infty}(\Omega),
\\
{\mbox{$u$ is a solution to \eqref{elliptic problem}, }} 
\end{array}
\right.
$$
where by a solution to \eqref{elliptic problem} we mean that $u$ is a solution to \eqref{resolvent} with $f=u+v\in L^\infty_+(\Omega)$.
Accordingly, we define
$$
A_g u =  \{v\in  L^{\infty}_{ +}(\Omega) : \ (u,v) \in  B_g\}, \quad D(A_g)=\{u\in L^1_+(\Omega): \ A_g u\ne \emptyset \}.
$$
\end{definition}
}
We recall that,  on a generic Banach space $X$, an operator  $A:X\to 2^{X}$ with domain $D(A)$ is said to be accretive if
\begin{equation}
\label{def-accr}
\|u-\bar u\|_X \le \|u-\bar u+\lambda(v-\bar v)\|_X \quad\mbox{for all $\lambda >0$, $(u,v),(\bar v,\bar v)\in A$,}
\end{equation}where we use the standard identification of a multivalued operator with its graph.
Equivalently,  $A$ is accretive  in $X$ if and only if $(I+\lambda A)^{-1}$ is a single-valued non-expansive map for any $\lambda\ge 0$.

\begin{proposition}
Let $g\in L^\infty_+(\partial \Omega)$. Then {$A_g$} is an accretive operator in { $L^1(\Omega)$} with { $D(A_g)$} dense in $L^1_+(\Omega)$, satisfying the non-expansivity condition { \eqref{comp-elliptic}} and the range condition
$
L^{\infty}_+(\Omega) { \subseteq} R(I + \lambda{ A_g})
$, { for all $\lambda>0$ }
\end{proposition}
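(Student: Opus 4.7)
The whole statement will be reduced to Theorem \ref{thm-elliptic} through a one-parameter rescaling that absorbs the resolvent parameter $\lambda$. The relation $v\in A_g u$ with $u+\lambda v=f$ is, by unfolding the definition of $B_g$, equivalent to
\begin{equation*}
u-\lambda\,\dive\!\left(u^m\frac{\nabla u}{|\nabla u|}\right)=f\ \ \text{in }\Omega,\qquad u=g\ \ \text{on }\partial\Omega.
\end{equation*}
Setting $\mu:=\lambda^{-1/(m-1)}>0$ and $w:=u/\mu$, the $0$-homogeneity of $\nabla u/|\nabla u|$ combined with $u^m=\mu^m w^m$ transforms the equation into the $\lambda=1$ resolvent
\begin{equation*}
w-\dive\!\left(w^m\frac{\nabla w}{|\nabla w|}\right)=\tilde f\ \ \text{in }\Omega,\qquad w=\tilde g\ \ \text{on }\partial\Omega,
\end{equation*}
with rescaled data $\tilde f:=\lambda^{1/(m-1)}f$ and $\tilde g:=\lambda^{1/(m-1)}g$, which are in $L^\infty_+(\Omega)$ and $L^\infty_+(\partial\Omega)$ whenever $f,g$ are.

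\textbf{Range and accretivity.} For the range condition, given $f\in L^\infty_+(\Omega)$ I would apply Theorem \ref{thm-elliptic} to the rescaled data $(\tilde f,\tilde g)$, obtain the unique $w\in DTBV_+(\Omega)\cap L^\infty(\Omega)$ together with the field $\z$, and set $u:=\mu w$ and $v:=(f-u)/\lambda\in L^\infty(\Omega)$. By the scaling one reads off that $u\in TBV_+(\Omega)\cap L^\infty(\Omega)$, $u+\lambda v=f\in L^\infty_+(\Omega)$, and $-v=\dive(u^m\nabla u/|\nabla u|)$, so $(u,v)\in B_g$ and $f\in R(I+\lambda A_g)$. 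For accretivity, taking $(u,v),(\bar u,\bar v)\in A_g$ and letting $f:=u+\lambda v$, $\bar f:=\bar u+\lambda\bar v$, the rescalings $w=u/\mu$ and $\bar w=\bar u/\mu$ are the Theorem \ref{thm-elliptic} solutions of the $\lambda=1$ resolvent corresponding to $(\tilde f,\tilde g)$ and $(\tilde{\bar f},\tilde g)$. Applying the contraction \eqref{comp-elliptic} to this pair and multiplying by $\mu$, using the identity $\mu\cdot\lambda^{1/(m-1)}=1$, gives
\begin{equation*}
\int_\Omega(u-\bar u)^+\le\int_\Omega(f-\bar f)^+=\int_\Omega\bigl((u-\bar u)+\lambda(v-\bar v)\bigr)^+\,,
\end{equation*}
which by symmetrisation yields accretivity in $L^1(\Omega)$ in the form \eqref{def-accr}, and simultaneously provides the $T$-contraction \eqref{comp-elliptic} at the level of $A_g$.

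\textbf{Density of $D(A_g)$ in $L^1_+(\Omega)$.} This is the step I expect to be the main obstacle. The range condition makes $J_\lambda:=(I+\lambda A_g)^{-1}$ a well-defined non-expansive map $L^\infty_+(\Omega)\to D(A_g)$ for every $\lambda>0$; since $L^\infty_+(\Omega)$ is dense in $L^1_+(\Omega)$ and $J_\lambda$ extends to an $L^1$-contraction by the previous step, density of $D(A_g)$ in $L^1_+(\Omega)$ reduces to showing $J_\lambda f\to f$ in $L^1(\Omega)$ as $\lambda\to 0^+$ for every $f\in L^\infty_+(\Omega)$. The uniform bound $0\le J_\lambda f\le\max(\|f\|_\infty,\|g\|_\infty)$ is inherited from \eqref{boundness} via the scaling, and testing the identity $J_\lambda f-f=\lambda\,\dive(u_\lambda^m\w_\lambda)$ against $\varphi\in C^\infty_c(\Omega)$ and using $|\w_\lambda|\le 1$ immediately yields the weak-$*$ convergence $J_\lambda f\rightharpoonup^* f$ in $L^\infty(\Omega)$. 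Upgrading to strong $L^1$-convergence is where real work is needed: by the $L^1$-contractivity of $J_\lambda$ it suffices to handle a dense set of regular data (say $f\in C^\infty_c(\Omega)\cap L^\infty_+$), for which I would derive uniform $BV$-estimates on the rescaled $w_\lambda$ through entropy-type estimates in the spirit of \eqref{identify-w} applied to $\ell\in\mathcal L$, then conclude through the compact embedding $BV\hookrightarrow L^1$.
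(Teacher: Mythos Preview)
Your treatment of the range condition and accretivity is correct. You absorb $\lambda$ by rescaling the unknown, $w=\lambda^{1/(m-1)}u$, which exploits the homogeneity of $u^m$ and keeps the domain fixed at the price of rescaling the boundary datum to $\tilde g=\lambda^{1/(m-1)}g$. The paper instead rescales the spatial variable, $\hat x=x/\lambda$, which keeps $u$ and $g$ unchanged but moves the problem to a dilated domain $\hat\Omega$. Both reductions are valid and lead to the same conclusion via Theorem~\ref{thm-elliptic} and \eqref{comp-elliptic}; yours is arguably tidier since one never leaves $\Omega$.

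The density argument, however, has a genuine gap. Your plan is to obtain uniform $BV$-bounds on the rescaled $w_\lambda$ (or on $u_\lambda=J_\lambda f$) from entropy-type identities and then conclude by compact embedding. But the natural estimate---multiplying $u_\lambda-f=\lambda\,\dive\z_\lambda$ by $\ell(u_\lambda)$ and using \eqref{identify-w}---only yields $\lambda\int_\Omega|D\Phi_\ell(u_\lambda)|\le C$, i.e.\ a bound that blows up like $1/\lambda$. Passing to the rescaled $w_\lambda$ does not help either: since $0\le w_\lambda\le \lambda^{1/(m-1)}M\to 0$, the $TBV$ information on truncations $T_{a,\infty}^0(w_\lambda)$ above any fixed level $a>0$ becomes vacuous, so no compactness is gained for $u_\lambda=\lambda^{-1/(m-1)}w_\lambda$. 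The paper bypasses compactness altogether: for $h\in\mathcal D_+(\Omega)$ one multiplies $u_n-h=\tfrac1n\dive\z_n$ by $T_{\epsilon,M}(u_n)-h$ and integrates by parts; the pairing $(\z_n,DT_{\epsilon,M}(u_n))=|D\Phi(T_{\epsilon,M}(u_n))|$ has a sign and can be discarded, while the extra $-h$ produces only controlled terms of order $\tfrac1n(\|\nabla h\|_1+M\,\mathrm{Per}(\Omega))$. Letting $\epsilon\to 0$ gives the quantitative rate $\|u_n-h\|_{L^2(\Omega)}\le C/\sqrt n$. The idea you are missing is precisely this choice of test function, ``$u_n-h$'' rather than ``$\ell(u_n)$''.
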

\begin{proof}
The accretivity of $A_g$ in $L^1(\Omega)$ and the range condition follow from Theorem \ref{thm-elliptic}. { Indeed, $(I+\lambda A_g)u=f$ for $\lambda > 0$ if and only if
{ $$
  \label{resolvent-l}
  \left\{\begin{array}
    {cc} \displaystyle  u -\lambda \dive\left(\varphi(u)\frac{\nabla u}{|\nabla u|}\right)=  f & {\rm in \ }\Omega.
    \\ u=g & {\rm on \ } \partial\Omega.
  \end{array}\right.
$$
Scaling $x\mapsto \hat x=\frac{1}{\lambda}x$ and applying Theorem \ref{thm-elliptic} in the rescaled domain $\hat\Omega$, we see that $I+\lambda A_g$ is single-valued and that the range condition holds true.} 
In addition,
$$
\|(u-\tilde u)^+\|_{L^1(\hat\Omega)} \stackrel{\eqref{comp-elliptic}}\le \| (f-\tilde f)^+\|_{L^1(\hat\Omega)},
$$
hence
$$
\|(u-\tilde u)^+\|_{L^1(\Omega)} \le {\| (f-\tilde f)^+\|_{L^1(\Omega)}.}
$$
Note that this implies that
$$
\|u-\tilde u\|_{L^1(\Omega)} \le \|f-\tilde f\|_{L^1(\Omega)},$$ thus $A_g$ is non-expansive.
%
%
}
To prove the density of $D({ A_g})$ in
$L^1_+(\Omega)$,
{
in view of the density of $\mathcal D_+(\Omega)$ in $L^1_+(\Omega)$, it suffices to show that any $h\in \mathcal D_+(\Omega)$ may be approximated by a sequence $\{u_n\}\subset D(A_g)$ {in $L^2(\Omega)$}.
%
%
}
By { the range condition}, $h\in R(I + \frac{1}{n} { A_g})$ for all $n \in \N$. Thus, for each $n \in \N$ there exists $u_{n} \in D({ A_g})$ such that $(u_{n}, { n(u_{n}-h})) \in B_g$.
%
%
Let $\w_n\in L^\infty(\Omega;\R^N)$ such that $\|\w\|_\infty\leq 1$ and $\z_n:=\varphi(u_n)\w_n$ as in Theorem \ref{thm-elliptic}. In particular,
$$
{ u_n-h= \frac{1}{n}\dive\z_n} \quad {\rm in  \ }\mathcal D'(\Omega).
$$
Given $\epsilon>0$, we multiply last equation by $T_{\epsilon,M} (u_n)-h$ and integrate by parts, obtaining
\begin{eqnarray*}
\int_\Omega {(u_n-h)(T_{\epsilon,M} (u_n)-h)} & { \le} & -\frac{1}{n}|D\Phi (T_{\epsilon,M}(u_n))|(\Omega) \\ & + & \frac{\varphi(M)}{n}(\|\nabla h\|_1+{ M} {\rm Per }(\Omega)).
\end{eqnarray*}
Then, letting $\epsilon\to 0^+$ we obtain that
$$
\|u_n-h\|_{L^2(\Omega)}\leq \frac{C}{\sqrt{n}}.
$$
Therefore $u_n$ has the desired property.
%
%
\end{proof}

We are now ready to begin the proof of Theorem \ref{thm-exi}.

\begin{proof}[{P{\bf roof of Theorem \ref{thm-exi}, first part}}]
Let ${\cal B}_g$ be the closure of $B_g$ in $(L^1(\Omega))^2$:
$$
\mbox{$(u, f) \in  {\cal B}_g$ $\iff$  $\exists (u_n,f_n)\in B_g: (u_n,f_n)\to (u,f)$ in $(L^{1}(\Omega))^2$.}
$$
Accordingly, we define
$$
{\cal A}_g u =  \{f\in  L^{1}_{+}(\Omega) : \ (u, f) \in  {\cal B}_g\}, \quad D({\cal A}_g)=\{u\in L^1_+(\Omega): \ {\cal A}_g u\ne \emptyset \}.
$$
It follows that ${\cal A}_g$ is accretive in $L^1(\Omega)$ (cf. \eqref{def-accr}), it satisfies the contraction principle (cf. \eqref{comp-elliptic}), and it verifies the range condition $\overline{D({\cal A}_g)}^{L^1(\Omega)} = L^1_+(\Omega) \subset
R(I + \lambda {\cal A}_g)$ for all $\lambda > 0$.
Therefore, according to Crandall-Liggett's Theorem (\cite{CL_71}, see also \cite[Theorem A.28]{ACMBook}),
for any $0 \leq u_0 \in L^1(\Omega)$ there exists a unique mild
solution (see \cite[Definition A.5]{ACMBook}) $u \in C({[0,+\infty)}; L^1(\Omega))$ of the abstract Cauchy
problem
$$
u^{\prime} (t) + {\cal { A}}_g u(t) \ni 0, \ \ \ \ u(0) = u_0.
$$
Moreover, $u(t) =  S(t) u_0$ for all $t \geq 0$, where $(\mathcal
S(t))_{t \geq 0}$ is the semigroup in $L^1(\Omega)$ generated by
Crandall-Liggett's exponential formula, i.e.,
$$
S(t) u_0 = \lim_{n \to \infty} \left(I + \frac{t}{n} {\cal { A}}_g\right)^{-n} u_0.
$$
We are going to prove that the mild solution obtained by Crandall-Ligget's Theorem is in fact an entropy solution in the sense of Definition \ref{def-sol}.

\medskip

{Fix any $\tau> 0$.} Let $k\in\N$, $h:={\rm \tau}/k$, $u^0=u_0$, and let $u^{n+1}$, $n\geq 0$, be the unique solution to the Euler implicit scheme \begin{equation}
  \label{eulerimplicit}\left\{\begin{array}
    {lc} \displaystyle \frac{u^{n+1}-u^n}{h}=\dive\left(\varphi(u^{n+1})\frac{\nabla u^{n+1}}{|\nabla u^{n+1}|}\right) & {\rm in \ } \Omega
    \\[1ex]  u^{n+1}=g & {\rm on \ }\partial \Omega,
  \end{array}\right.
\end{equation}
as given by Theorem \ref{thm-elliptic}. Note that, by \eqref{boundness},
\begin{equation}
  \label{boundparabolic} 0\leq u^n\leq M:=\max\{\|u_0\|_\infty,\|g\|_{L^\infty(\partial\Omega)}\} \quad\mbox{for all $n\ge 0$.}
\end{equation}
Let $\w^{n+1}$ be the vector field associated to $u^{n+1}$, as given by Theorem \ref{thm-elliptic}, $\z^{n+1}:=\varphi(u^{n+1})\w^{n+1}$, $t_n:=nh$, and $I_n:=(t_n,t_{n+1}]$. We define
\begin{eqnarray}
\nonumber & \displaystyle u_k := u^0\chi_{[0,t_1]}+\sum_{n=1}^{k-1} u^n\chi_{I_n}, \qquad \xi_k := \sum_{n=0}^{k-1}\frac{u^{n+1}-u^n}{h}\chi_{I_n}, &
\\  \label{def-z-k}
& \displaystyle \w_k := \w^1\chi_{[0,t_1]}+\sum_{n=1}^{k-1} \w^{k+1}\chi_{I_n}, \qquad \z_k := \varphi(u_k) \w_k. &
\end{eqnarray}
We know ({see e.g.} {\cite[Theorem {A.24 and} A.25]{ACMBook}}) that  this scheme converges{, as $k\to +\infty$,} to the unique mild solution $u(t)=S(t)u_0$ {in $(0,{\rm \tau})$}, with
\begin{equation}
\label{uk-conv-unif}
u_k\to u \quad {\rm in \ } L^1(\Omega) {\rm \ uniformly  \ in \ } [0,{{\rm \tau}]}
\end{equation}
and that, for any two given functions $u_0,\overline u_0\in L^1(\Omega)_+$, there holds
$$
\|S(t)u_0-S(t)\overline u_0\|_1\leq \|u_0-\overline u_0\|_1.
$$
Moreover, the homogeneity 
of $\mathcal B_g$ implies (cf. \cite{BenilanCrandallCA}) that there exists $C>0$ such that
$$
\overline  \lim_{h\to 0}\left\|\frac{S(t+h)u_0-S(t)u_0}{h}\right\|\leq C\frac{\|u_0\|_1}{t},
$$
which implies that
\begin{equation}
  \label{ut_measure}\|tu_t\|_{L^\infty((0,{\rm \tau});\mathcal M(\Omega))}\leq C\|u_0\|_1.
\end{equation}
Arguing as in \cite[{Proof of Theorem 1}]{ACMM-MA}, we find that
$$
\w_k\stackrel{*}\rightharpoonup \w \quad {\rm weakly }^* {\rm \ in \ } L^\infty(Q_{{\rm \tau}}),{\quad\|\w\|_\infty \le 1,}
$$
\begin{equation}\label{zk-conv}
\z_k \stackrel{*}\rightharpoonup \varphi(u)\w=:\z \quad {\rm weakly}^* {\rm \ in \ } L^\infty(Q_{{\rm \tau}}),
\end{equation}
$$
\xi_k\stackrel{*}\rightharpoonup u_t \quad {\rm weakly}^* {\rm \ in \ }(L^1_{}((0,{\rm \tau});BV(\Omega)\cap L^2(\Omega)))^*
$$
and
$$
u_t=\dive \z \quad {\rm in \ } {\mathcal D'(Q_{\rm \tau}).}
$$
In fact, by \eqref{ut_measure}, we have
$$
u_t=\dive \z \quad {\rm in \ } L^\infty_{loc{,w}}((0,{{\rm \tau}]},\mathcal M (\Omega)),
$$
{hence $(iii)$ in {Definition \ref{def-sol}} holds.} Moreover, by \cite[Lemma 10]{ACMM-MA},  it holds
\begin{equation}
  \label{trace-conv} [\z^k,\nu^\Omega]\rightharpoonup [\z,\nu^\Omega] \,,\quad {\rm weakly}^* {\rm \ in \ } L^\infty(S_{\rm \tau}).
\end{equation}
This completes the first part of the proof of Theorem \ref{thm-exi}.
\end{proof}

The proof of Theorem \ref{thm-exi} will be completed once the following three lemmas (Lemma \ref{lem:ubv}, Lemma \ref{2.9}, and  Lemma \ref{lem-boundcondparabolic}) have been established.

\begin{lemma}\label{lem:ubv}
Let {$u_0\in L^\infty_+(\Omega)$, $g\in L_+^\infty(\partial \Omega)$, $\tau\in (0,+\infty)$,}
and $u(t)=S(t)u_0$. Then $u\in L^1((0,{\rm \tau});TBV_+(\Omega))$ and
\begin{equation}\label{bvtimespace}
\ell(u),J_\ell(u)\in BV([\overline\tau,{\rm \tau}]\times\Omega)\quad \mbox{for any $\ell\in\mathcal L$ and any $\overline\tau>0$.}
\end{equation}
%
%
\end{lemma}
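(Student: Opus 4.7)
The plan is to obtain all the claimed regularity at the discrete level along the Euler scheme \eqref{eulerimplicit} and then pass to the limit $k\to\infty$ using \eqref{uk-conv-unif} and lower semicontinuity of the total variation. The crucial test function at each step is $\ell(u^{n+1})$, which through the identification \eqref{identify-w} converts the divergence term into the spatial total variation $|D\Phi_\ell(u^{n+1})|(\Omega)$.

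For each $\ell\in\mathcal L$ and each $n\geq 0$, I would multiply \eqref{eulerimplicit} by $\ell(u^{n+1})$, apply Lemma \ref{lemmacaselles} together with \eqref{identify-w}, and crudely bound the boundary contribution via $|[\z^{n+1},\nu^\Omega]|\leq\varphi(M)$ (with $M$ as in \eqref{boundparabolic}), obtaining
\[
\int_\Omega \ell(u^{n+1})(u^{n+1}-u^n)\d x + h\,|D\Phi_\ell(u^{n+1})|(\Omega)\leq h\,\ell(M)\varphi(M)\mathcal H^{N-1}(\partial\Omega).
\]
Since $J_\ell'=\ell$ and $J_\ell$ is convex, the pointwise inequality $J_\ell(u^{n+1})-J_\ell(u^n)\leq \ell(u^{n+1})(u^{n+1}-u^n)$ telescopes the first term once summed in $n=0,\dots,k-1$, yielding
\[
\sum_{n=1}^{k} h\,|D\Phi_\ell(u^n)|(\Omega)\leq \int_\Omega J_\ell(u_0)\d x + \tau\,\ell(M)\varphi(M)\mathcal H^{N-1}(\partial\Omega)=:C_\ell.
\]
The left-hand side equals $\int_h^{\tau}|D\Phi_\ell(u_k(t))|(\Omega)\d t$ by the definition of $u_k$, so Fatou's lemma combined with \eqref{uk-conv-unif} and the $L^1$-lower semicontinuity of $|D(\cdot)|(\Omega)$ gives $\int_0^{\tau}|D\Phi_\ell(u(t))|(\Omega)\d t\leq C_\ell$ for every $\ell\in\mathcal L$.

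To deduce $u\in L^1((0,\tau);TBV_+(\Omega))$, I would specialize to $\ell=T_{a,M}^0$ for $a>0$: since $\Phi_\ell(s)=(s^{m+1}-a^{m+1})_+/(m+1)$ is strictly increasing on $[a,M]$ with inverse Lipschitz of constant $a^{-m}$, one has the pointwise identity $T_{a,\infty}^0(u)=\Psi(\Phi_\ell(u))$ with $\Psi$ Lipschitz and $\Psi(0)=0$; the chain rule for $BV$-compositions then yields $|DT_{a,\infty}^0(u(t))|(\Omega)\leq a^{-m}|D\Phi_\ell(u(t))|(\Omega)$, which is integrable in time by the previous step.

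For the space-time $BV$ bound \eqref{bvtimespace}, I would observe that any $\ell\in\mathcal L$ with $\supp(\ell')\subset[a,\infty)$ factors $\ell(u)$ and $J_\ell(u)$ as Lipschitz functions of $T_{a,\infty}^0(u)$ vanishing at zero, so their spatial total variations on $\Omega$ are controlled, uniformly in $t$ after integration on $[\overline\tau,\tau]$, by the $TBV$-bound above. The temporal variation follows from \eqref{ut_measure}: for $t\geq\overline\tau$ it yields $\|u_t\|_{\mathcal M(\Omega)}\leq C\|u_0\|_1/\overline\tau$, which transfers via the Lipschitz continuity of $\ell$ and $J_\ell$ to $\partial_t\ell(u)$ and $\partial_t J_\ell(u)$. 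The main obstacle is precisely this transfer: since $u$ itself is not known to lie in space-time $BV$, the time derivative of the composition $\ell(u)$ must be handled through the discrete inequality $\|\ell(u^{n+1})-\ell(u^n)\|_1\leq \mathrm{Lip}(\ell)\|u^{n+1}-u^n\|_1$ and passage to the limit, rather than through a direct chain rule.
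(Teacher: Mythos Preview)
Your proposal is correct and follows essentially the same route as the paper: test \eqref{eulerimplicit} with $\ell(u^{n+1})$, use \eqref{identify-w} and the convexity of $J_\ell$ to telescope, bound the boundary term by $\ell(M)\varphi(M)\mathcal H^{N-1}(\partial\Omega)$, and pass to the limit by lower semicontinuity; the space--time $BV$ statement then follows from \eqref{ut_measure}. One small slip: your choice $\ell=T_{a,M}^0$ is not in $\mathcal L$ since $T_{a,M}^0(0)=a\neq 0$, but replacing it by $T_{a,M}^a$ (which has the same $\ell'$, hence the same $\Phi_\ell$) fixes this without changing anything else.
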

\begin{proof}
{Let $u^n$ be defined by \eqref{eulerimplicit}.}
We multiply the first equation in  \eqref{eulerimplicit} by $\ell(u^{n+1})$ and integrate by parts:
$$
\int_\Omega \ell(u^{n+1})\frac{u^{n+1}-u^n}{h}=\int_\Omega \ell(u^{n+1})\dive \z^{n+1} {\stackrel{\eqref{identify-w}}=} \int_{\partial\Omega} \ell(u^{n+1})[\z^{n+1},\nu^\Omega]\d \mathcal H^{N-1}-\int_\Omega |D\phi_\ell(u^{n+1})|.
$$
Then, using the  convexity of $J$, one gets
$$
\int_\Omega \frac{J_\ell(u^{n+1})-J_\ell(u^n)}{h}+\int_\Omega  |D\phi_\ell(u^{n+1})|\leq \int_{\partial\Omega} \ell(u^{n+1})[\z^{n+1},\nu^\Omega]\d \mathcal H^{N-1}.
$$
Integrating over $I_{n+1}$ and adding up, we get
\begin{eqnarray*}
\lefteqn{\int_{h{={\rm \tau}/k}} ^{\rm \tau}\int_\Omega |D\phi_\ell(u_{k})| = \sum_{n=0}^{k-1}\int_{I_{n+1}}\int_\Omega |D\phi_\ell(u^{n+1})|}
\\ &\leq & { -}  \sum_{n=0}^{k-1}\int_{I_{n+1}}\int_\Omega \frac{J_\ell(u^{n+1})-J_\ell(u^n)}{h}+\sum_{n=0}^{k-1}\int_{I_{n+1}}\int_{\partial\Omega}\ell(u^{n+1})[\z^{n+1},\nu^\Omega]\d \mathcal H^{N-1}
\\ &\stackrel{\eqref{boundparabolic},\eqref{def-z-k}}\leq & \int_\Omega J_\ell(u_0)-\int_\Omega J_\ell(u^k)+\sum_{n=0}^{k-1}\int_{I_{n+1}}\int_{ \partial\Omega} \ell(M)\varphi(M)\,.
%
%
\end{eqnarray*}
By lower semicontinuity {and \eqref{uk-conv-unif}}, we get that
$$
\int_\Omega J_\ell(u) + \int_{0}^{\rm \tau}\int_\Omega |D\phi_\ell({ u})|\leq {\rm \tau} |\partial\Omega| \ell(M)\varphi(M){+\int_\Omega J_\ell(u_0)\,.}
$$
Hence $u\in L^1((0,{\rm \tau});TBV_+(\Omega))$ and \eqref{bvtimespace} follows taking \eqref{ut_measure} into account.
\end{proof}

The next result is preparatory for the proof of $(iv)$ and $(v)$.


\begin{lemma}\label{2.9}
  The following inequality is satisfied for any $0\leq\psi\in C_c^\infty((0,{\rm \tau})\times\overline \Omega)$ and any $\ell\in\mathcal L$:
 \begin{equation}\label{entr-con-with-bdr}\int_0^{\rm \tau}\int_\Omega \psi d|D\phi_\ell(u)|+\int_0^{\rm \tau}\int_{\partial\Omega}\psi|\phi_\ell(u)-\phi_\ell(g)|$$$$\leq \int_0^{\rm \tau}\int_\Omega J_\ell(u)\psi_t+\int_0^{\rm \tau}\int_{\partial\Omega} \ell(g)[\z,\nu^\Omega]\psi \d\mathcal H^{N-1}-\int_0^{\rm \tau}\int_\Omega \ell(u)\z\cdot \nabla \psi.\end{equation}
\end{lemma}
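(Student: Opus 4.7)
My strategy is to derive the inequality first at the discrete level (Euler implicit scheme \eqref{eulerimplicit}), and then pass to the limit $k \to +\infty$, using the convergences already obtained in the first part of the proof of Theorem \ref{thm-exi} together with the elliptic identities from Theorem \ref{thm-elliptic}.

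\textbf{Discrete identity.} Fix $\ell\in\mathcal L$ and $0\le \psi\in C_c^\infty((0,\tau)\times\overline\Omega)$, and set $\psi^n(x):=\psi(t_n,x)$. Multiply the $(n+1)$-th discrete equation by $\ell(u^{n+1})\psi^{n+1}$ and integrate over $\Omega$. By Lemma \ref{lemmacaselles} applied to the product $\ell(u^{n+1})\psi^{n+1}$, together with the smooth product rule $(\z^{n+1},D(\ell(u^{n+1})\psi^{n+1}))=\psi^{n+1}(\z^{n+1},D\ell(u^{n+1}))+\ell(u^{n+1})\z^{n+1}\cdot\nabla\psi^{n+1}\,\mathcal L^N$ and the identification \eqref{identify-w}, one obtains
\begin{align*}
\int_\Omega \ell(u^{n+1})\psi^{n+1}\tfrac{u^{n+1}-u^n}{h}
&+\int_\Omega \psi^{n+1}\,d|D\Phi_\ell(u^{n+1})|+\int_\Omega \ell(u^{n+1})\z^{n+1}\cdot\nabla\psi^{n+1}\\
&=\int_{\partial\Omega}\ell(u^{n+1})[\z^{n+1},\nu^\Omega]\psi^{n+1}\,d\mathcal H^{N-1}.
\end{align*}
Using \eqref{boundcondelliptic} on the right-hand side gives $\ell(u^{n+1})[\z^{n+1},\nu^\Omega]\le \ell(g)[\z^{n+1},\nu^\Omega]-|\Phi_\ell(g)-\Phi_\ell(u^{n+1})|$ $\mathcal H^{N-1}$-a.e.\ on $\partial\Omega$.

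\textbf{Convexity and summation by parts.} Since $\ell$ is nondecreasing, $J_\ell$ is convex, hence $\ell(u^{n+1})(u^{n+1}-u^n)\ge J_\ell(u^{n+1})-J_\ell(u^n)$. Multiply the resulting inequality by $h$, sum over $n=0,\dots,k-1$, and perform a discrete time integration by parts:
$$\sum_{n=0}^{k-1}h\int_\Omega \psi^{n+1}\tfrac{J_\ell(u^{n+1})-J_\ell(u^n)}{h}=-\sum_{n=0}^{k-1}h\int_\Omega J_\ell(u^n)\tfrac{\psi^{n+1}-\psi^n}{h},$$
the telescoping boundary terms vanishing because $\psi$ is compactly supported in $(0,\tau)$. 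This yields the inequality \eqref{entr-con-with-bdr} for the step-function interpolants $u_k$, $\z_k$.

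\textbf{Passage to the limit.} To recover \eqref{entr-con-with-bdr} I pass to the limit $k\to+\infty$ term by term. The strong convergence $u_k\to u$ in $C([0,\tau];L^1(\Omega))$ from \eqref{uk-conv-unif} and the uniform bound \eqref{boundparabolic}, together with Lebesgue's theorem, yield convergence of $J_\ell(u_k)\to J_\ell(u)$ and $\ell(u_k)\to \ell(u)$ in $L^1(Q_\tau)$; combined with \eqref{zk-conv} this gives convergence of the time-derivative term to $-\int J_\ell(u)\psi_t$ and of the bulk flux term to $\int \ell(u)\z\cdot\nabla\psi$. The gradient term is handled by lower semicontinuity: Lemma \ref{lem:ubv} and the uniform $BV$ bound on $\Phi_\ell(u_k)$ obtained in its proof give $D\Phi_\ell(u_k)\rightharpoonup^* D\Phi_\ell(u)$ in $\mathcal M(Q_\tau)$, and hence $\liminf_k\int\psi\,d|D\Phi_\ell(u_k)|\ge \int\psi\,d|D\Phi_\ell(u)|$. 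Finally, \eqref{trace-conv} and the weak continuity of the BV trace operator under the above $BV$-bound permit to pass to the limit in the boundary terms, again relying on lower semicontinuity (via Fatou) for the modulus $|\Phi_\ell(g)-\Phi_\ell(u_k)|$.

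\textbf{Main obstacle.} The delicate step is the lower semicontinuity of the boundary integral $\int_0^\tau\int_{\partial\Omega}\psi\,|\Phi_\ell(g)-\Phi_\ell(u_k)|$, which requires sufficient compactness of the traces of $u_k$. The uniform $BV$-bound of Lemma \ref{lem:ubv}, combined with the $L^1$ convergence $u_k\to u$ in the bulk and trace continuity for $BV$ functions (as in \cite[Lemma 10]{ACMM-MA}), ensures convergence of traces in $L^1(S_\tau)$, which is enough to apply Fatou's lemma and conclude.
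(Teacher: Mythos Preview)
Your overall strategy coincides with the paper's: derive the inequality at the discrete level using \eqref{identify-w}, convexity of $J_\ell$, and \eqref{boundcondelliptic}, then pass to the limit. The discrete part and the handling of the right-hand side in the limit are fine.

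The gap is in your treatment of the boundary term on the left-hand side. You claim that the uniform $BV$ bound together with bulk $L^1$ convergence yields convergence of the traces of $\Phi_\ell(u_k)$ in $L^1(S_\tau)$, and then apply Fatou. This is not correct: the trace operator $BV(\Omega)\to L^1(\partial\Omega)$ is \emph{not} continuous with respect to weak* $BV$ convergence (equivalently, $L^1$ convergence plus a $BV$ bound). A standard counterexample in one variable is $v_k=\chi_{[1/k,1]}$ on $(0,1)$: $v_k\to 1$ in $L^1$, $|Dv_k|(0,1)=1$, yet $v_k(0)=0\not\to 1$. The reference you cite, \cite[Lemma 10]{ACMM-MA}, concerns the weak* convergence of the normal traces $[\z_k,\nu^\Omega]$, not the traces of $u_k$; it does not help here.

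What the paper does instead---and what repairs your argument---is to treat the two left-hand terms \emph{jointly}. For fixed $g$ and $0\le\psi$, the functional
\[
v\ \longmapsto\ \int_\Omega \psi\,d|D\Phi_\ell(v)|\ +\ \int_{\partial\Omega}\psi\,|\Phi_\ell(v)-\Phi_\ell(g)|\,d\mathcal H^{N-1}
\]
is lower semicontinuous with respect to $L^1(\Omega)$ convergence (this is the classical relaxation of the total variation with Dirichlet datum; equivalently, extend $\Phi_\ell(v)$ by $\Phi_\ell(g)$ outside $\Omega$ and use lower semicontinuity of the total variation). Applied for a.e.\ $t$ together with Fatou in time, this gives the required $\liminf$ inequality for the whole left-hand side. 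With this correction your proof is complete and matches the paper's.
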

\begin{proof}
As in the proof of Lemma \ref{lem:ubv}, we multiply the equation by $\ell(u^{n+1})\psi$ and integrate by parts to get
\begin{eqnarray*}
\lefteqn {\int_\Omega \frac{J_\ell(u^{n+1})-J_\ell(u^n)}{h}\psi+\int_\Omega \psi |D\phi_\ell(u^{n+1})|}
\\ & \leq & \int_{\partial\Omega} \ell(u^{n+1})\psi [\z^{n+1},\nu^\Omega]\d\mathcal H^{N-1}-\int_\Omega \ell(u^{n+1})\z^{n+1}\cdot\nabla \psi.
\end{eqnarray*}
Integrating over $I_{n+1}$, adding up, and choosing $k$ sufficiently large such that ${\rm supp} \psi\subset {(h,{\rm \tau}-h)\times\overline \Omega}$, we see that
\begin{eqnarray*}
\lefteqn{\int_0^{\rm \tau} \int_\Omega J_\ell(u_k)\frac{\psi(t)-\psi(t+h)}{h}+\int_0^{\rm \tau}\int_\Omega \psi|D\phi_\ell(u_k)|}
\\ & \leq & \int_0^{\rm \tau}\int_{\partial\Omega}\ell(u_k(t))[\z_k(t-h),\nu^\Omega]\psi(t)\d\mathcal H^{N-1}-\int_0^{\rm \tau}\int_\Omega \ell(u_k(t))\z_k(t-h)\cdot\nabla\psi.
\end{eqnarray*}
Using \eqref{boundcondelliptic}, we obtain that
\begin{eqnarray*}
\lefteqn{\int_0^{\rm \tau}\int_\Omega \psi|D\phi_\ell(u_k)|+\int_0^{\rm \tau}\int_{\partial\Omega}|\phi_\ell(u_k)-\phi_\ell(g)|\psi\d\mathcal H^{N-1}} \\ & \leq & \dys \int_0^{\rm \tau} \int_\Omega J_\ell(u_k)\frac{\psi(t+h)-\psi(t)}{h}+\int_0^{\rm \tau}\int_{\partial\Omega}\ell(g)[\z_k(t-h),\nu^\Omega]\psi(t)\d\mathcal H^{N-1}-\int_0^{\rm \tau}\int_\Omega \ell(u_k(t))\z_k(t-h)\cdot\nabla\psi.\end{eqnarray*}
We pass to the limit as $k\to +\infty$:  by lower semicontinuity, {\eqref{uk-conv-unif}, \eqref{zk-conv}, and } \eqref{trace-conv} we obtain \eqref{entr-con-with-bdr}.
\end{proof}

We next show that the solution also satisfies inequality \eqref{boundcondelliptic} a.e. in $[0,{\rm \tau}]$:
\begin{lemma}
  \label{lem-boundcondparabolic} Let $u\in C((0,{\rm \tau});L^1(\Omega))\cap L^\infty(Q_{\rm \tau})\cap L^1((0,{\rm \tau});TBV(\Omega))$ and $\w\in X(\Omega)$ with $\|\w\|_\infty\leq 1$ such that $u$ and $\z:=\varphi(u) \w$ satisfy the entropy inequality \eqref{entr-con-with-bdr}. { Then,}
 \begin{equation}
  \label{boundcondparabolic} |\Phi_\ell(g)-\Phi_\ell(u(t))|\leq (\ell(g)-\ell(u(t)))[\z(t),\nu^\Omega]\quad \mbox{$\mathcal H^{N-1}-$a.e. on  $\partial\Omega$ for a.e.\ $t>0$.}
\end{equation}
\end{lemma}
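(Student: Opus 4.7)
\textbf{Proof plan for Lemma \ref{lem-boundcondparabolic}.}

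The strategy is to feed back the distributional identity $u_t=\dive\z$ into the entropy inequality \eqref{entr-con-with-bdr} so that the interior terms drop out, leaving only a boundary inequality which can then be localized. Formally, for a test function $\psi\in C_c^\infty((0,\tau)\times\overline\Omega)$, $\psi\ge 0$, multiplying $u_t=\dive\z$ by $\ell(u)\psi$ and integrating by parts in time (using $J_\ell'=\ell$, whence $\ell(u)u_t=\partial_t J_\ell(u)$) gives
\begin{equation*}
-\int_0^\tau\!\int_\Omega \psi_t\,J_\ell(u) =\int_0^\tau\!\int_\Omega \ell(u)\psi\,\dive\z.
\end{equation*}
Applying the Anzellotti--Chen--Frid Green's formula from Lemma \ref{lemmacaselles} to the pairing $\dive(\ell(u)\psi\,\z)=\ell(u)\psi\,\dive\z+(\z,D(\ell(u)\psi))$ on the right hand side and splitting $D(\ell(u)\psi)=\psi\,D\ell(u)+\ell(u)\nabla\psi\,dx$, one obtains the identity
\begin{equation*}
\int_0^\tau\!\int_\Omega J_\ell(u)\psi_t-\int_0^\tau\!\int_\Omega \ell(u)\z\cdot\nabla\psi=-\int_0^\tau\!\int_{\partial\Omega}\ell(u)\psi[\z,\nu^\Omega]\,d\mathcal H^{N-1}+\int_0^\tau\!\int_\Omega \psi\,(\z,D\ell(u)).
\end{equation*}

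Substituting this identity into \eqref{entr-con-with-bdr} and rearranging yields
\begin{equation*}
\int_0^\tau\!\int_\Omega \psi\,\bigl[d|D\Phi_\ell(u)|-(\z,D\ell(u))\bigr] +\int_0^\tau\!\int_{\partial\Omega}\psi\bigl[|\Phi_\ell(u)-\Phi_\ell(g)|-(\ell(g)-\ell(u))[\z,\nu^\Omega]\bigr]\d\mathcal H^{N-1}\le 0.
\end{equation*}
Because $|\z|\le\varphi(u)$ and the BV chain rule gives $|D\Phi_\ell(u)|=\varphi(u)|D\ell(u)|$, the integrand of the interior term satisfies $(\z,D\ell(u))\le |\z||D\ell(u)|\le |D\Phi_\ell(u)|$ as measures, so it is nonnegative and may be discarded. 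This leaves
\begin{equation*}
\int_0^\tau\!\int_{\partial\Omega}\psi\bigl[|\Phi_\ell(u)-\Phi_\ell(g)|-(\ell(g)-\ell(u))[\z,\nu^\Omega]\bigr]\d\mathcal H^{N-1}\le 0 .
\end{equation*}
Since $\psi\ge 0$ is otherwise arbitrary in $C_c^\infty((0,\tau)\times\overline\Omega)$, a standard density/localization argument (taking $\psi(t,x)=\alpha(t)\beta_\varepsilon(x)$ with $\alpha$ approximating the indicator of a small time interval around a Lebesgue point and $\beta_\varepsilon$ approximating the indicator of a small boundary patch) yields \eqref{boundcondparabolic} $\mathcal H^{N-1}$-a.e.\ on $\partial\Omega$ for a.e.\ $t\in(0,\tau)$.

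The only genuinely delicate step is the justification of the formal manipulation ``multiply $u_t=\dive\z$ by $\ell(u)\psi$'', since $u_t$ is only a Radon measure in space (through $(ii)$) and $\ell(u)$ is only $BV$. This will be handled by approximation: either by going back to the Euler-scheme approximants $u^{n+1}$ used to construct $u$ (where the equation holds pointwise and the analogous algebraic identity may be derived as in Lemma \ref{2.9} and then passed to the limit), or by regularizing $\ell$ by a smooth bounded truncation so that the pairing $(\z,D\ell(u))$ is under control via Lemma \ref{lemmacaselles}. Once this approximation is in place, every other step is an application of the BV chain rule, the Green--Anzellotti formula, and standard Lebesgue differentiation on $\partial\Omega$.
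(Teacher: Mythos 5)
Your overall strategy (feed the identity and the Anzellotti--Green formula back into \eqref{entr-con-with-bdr}, collapse the interior terms, localize on $\partial\Omega$) is in the right spirit, but the route you take diverges from the paper's at two points, and both divergences introduce issues you would need to repair.

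First, you invoke $u_t=\dive\z$ from the start. This is \emph{not} a hypothesis of Lemma~\ref{lem-boundcondparabolic}: the lemma only assumes that $u$ and $\z$ satisfy the entropy inequality \eqref{entr-con-with-bdr}. You therefore prove a statement with a strictly stronger hypothesis than the one asserted. In the application (the semigroup solution), $u_t=\dive\z$ does hold, so nothing is lost there; but you then spend a paragraph worrying about how to rigorously justify the formal pairing $\langle u_t,\ell(u)\psi\rangle$, a difficulty that is entirely of your own making. The paper avoids it by never using $u_t=\dive\z$: it simply integrates by parts in time (replacing $\int J_\ell(u)\psi_t$ by $-\int(J_\ell(u))_t\psi$, which is licit by \eqref{bvtimespace}) and in space (via \eqref{Green}--\eqref{anzellotti-caselles}), obtaining a measure inequality on $\overline\Omega$ of the form $\mu_L\le\mu_R$ in which the boundary contribution on each side is a multiple of $\mathcal H^{N-1}\res\partial\Omega$ and every remaining term is a Radon measure supported in $\Omega$.

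Second, to dispose of the interior term you argue $(\z,D\ell(u))\le|D\Phi_\ell(u)|$ as measures, via the claim $|D\Phi_\ell(u)|=\varphi(u)|D\ell(u)|$. This identity is fine for the diffuse part, but on $J_u$ it does not literally hold (one has $|[\Phi_\ell(u)]|=\int_{u^-}^{u^+}\ell'\varphi$ versus $|[\ell(u)]|=\int_{u^-}^{u^+}\ell'$, and no single value $\varphi(u)$ relates them), and the estimate $(\z,D\ell(u))^j\le|D\Phi_\ell(u)|^j$ requires the precise structure of the Anzellotti pairing on the jump set. The paper's argument makes this irrelevant: restricting the nonnegative measure $\mu_R-\mu_L$ to $\partial\Omega$ eliminates \emph{all} interior terms at once, regardless of their sign. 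This is both simpler and immune to the jump-set subtlety.

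In short: same tools (Green's formula, time/space integration by parts, localization on $\partial\Omega$), but the paper's proof proceeds by restriction of a measure inequality to $\partial\Omega$ and needs neither $u_t=\dive\z$ nor any sign information on the interior measures; your version introduces both, one unnecessarily and one with a gap at the jump part that would need to be filled.
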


\begin{proof}
  It suffices to integrate by parts equation \eqref{entr-con-with-bdr} (recall \eqref{bvtimespace}) to get
  \begin{eqnarray*}
\lefteqn{\int_0^{\rm \tau}\int_\Omega \psi d|D\phi_\ell(u)|+\int_0^{\rm \tau}\int_{\partial\Omega}\psi|\phi_\ell(u)-\phi_\ell(g)|}\\ & \leq & \dys-\int_0^{\rm \tau}\int_\Omega (J_\ell(u))_t\psi+\int_0^{\rm \tau}\int_{\partial\Omega} (\ell(g)-\ell(u))[\z,\nu^\Omega]\psi \d\mathcal H^{N-1}+\int_0^{\rm \tau}\int_\Omega \psi (\z,D\ell(u)).\end{eqnarray*}This implies that, a.e. $t\in [0,{\rm \tau}]$ as measures $$|D\phi_\ell(u)|+|\phi_\ell(u)-\phi_\ell(g)|\mathcal H^{N-1}\res_{\partial\Omega}\leq -(J_\ell(u))_t+(\ell(g)-\ell(u))[\z,\nu^\Omega]\mathcal  H^{N-1}\res_{\partial\Omega}+(\z,D\ell(u)).$$Since they have disjoint support, we obtain, a.e. $t\in [0,{\rm \tau}]$ as measures, $$|D\phi_\ell(u)|\leq -(J_\ell(u))_t+(\z,D\ell(u)),$$$$|\phi_\ell(u)-\phi_\ell(g)|\mathcal H^{N-1}\res_{\partial\Omega}\leq (\ell(g)-\ell(u))[\z,\nu^\Omega]\mathcal  H^{N-1}\res_{\partial\Omega},$$which proves the Lemma.
\end{proof}


We are now ready to complete the proof of Theorem \ref{thm-exi}.

\begin{proof}[P{\bf roof of Theorem \ref{thm-exi}: conclusion}]
Let $\tau\in (0,+\infty)$. In the first part of the proof, we have already shown that $u\in C([0,{\rm \tau});$$L^1(\Omega))\cap L^\infty((0,{\rm \tau})\times\Omega)$ and that $(ii)$, $(iii)$, and $(vi)$ {in Definition \ref{def-sol}} hold. Lemma \ref{lem:ubv} implies $(i)$. Lemma \ref{2.9} with $\psi\in C_c^\infty((0,{\rm \tau})\times\Omega)$ implies $(iv)$.  { We note that  $(v)$ is implied by \eqref{boundcondparabolic}  as proven in \cite[Lemma 5.8]{GMP}.}
\end{proof}

\subsection{Uniqueness}

{In this section we prove:}

\begin{theorem}\label{thm-uni}
Let $u_0\in L^\infty_+(\Omega)$ and $g\in L^\infty_+(\partial \Omega)$. The {entropy} solution to \eqref{pbparab} {in $Q$} is unique.
\end{theorem}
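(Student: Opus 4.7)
The plan is to deduce uniqueness from a comparison principle between entropy subsolutions and entropy solutions, and then close by symmetry. More precisely, I would prove: if $u$ is an entropy subsolution to \eqref{pbparab} in $Q_\tau$ with initial datum $u_0$ and $\hat u$ is an entropy solution with initial datum $\hat u_0$, both sharing the boundary datum $g$, then
$$
\int_\Omega (u(t) - \hat u(t))^+ \, dx \le \int_\Omega (u_0 - \hat u_0)^+ \, dx \quad \text{for a.e.\ } t \in (0,\tau).
$$
Applied to two entropy solutions of \eqref{pbparab} sharing the data $(u_0, g)$, and then with the roles of $u$ and $\hat u$ exchanged, this forces $u = \hat u$.

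To establish the contraction I would use a Kruzhkov doubling of variables, in the spirit of \cite{ACMM-JEE, GMP1, GMP}. For $(s,y) \in Q_\tau$ regarded as a parameter and $\epsilon > 0$, the truncation $\ell_{\epsilon,s,y}(r) := T^{\hat u(s,y)}_{\hat u(s,y)+\epsilon,\,M}(r)$ lies in $\mathcal L$ (for $M$ larger than $\|u\|_\infty \vee \|\hat u\|_\infty$) and approximates $(r - \hat u(s,y))^+$ as $\epsilon \to 0^+$. Using a tensor test function $\psi(t,x)\,\rho_\delta(t-s, x-y)$ with a standard mollifier $\rho_\delta$ and $\psi \in C_c^\infty((0,\tau) \times \overline{\Omega})$, I would (i) apply the boundary-inclusive entropy inequality of Lemma \ref{2.9} to $u$ with $\ell_{\epsilon,s,y}$, and integrate in $(s,y)$; and (ii) test the distributional equation $\hat u_t = \dive_y \hat \z$ from $(iii)$ against $\ell_{\epsilon,s,y}(u(t,x))\,\psi(t,x)\,\rho_\delta(t-s, x-y)$ and integrate in $(t,x)$. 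Subtracting the two and sending $\delta \to 0^+$ yields, via lower semicontinuity, a combined inequality.

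In this combined inequality the diffusive terms $h(u, D\ell_\epsilon(u - \hat u))$ and $h(\hat u, D\ell_\epsilon(u - \hat u))$ are pitted against the Anzellotti pairings $(\z, D\ell_\epsilon(u - \hat u))$ and $(\hat \z, D\ell_\epsilon(u - \hat u))$. Using the identification $(\z, D\ell(u)) = |D\Phi_\ell(u)| = h(u, D\ell(u))$ of \eqref{identify-w}, inherited along the Euler scheme by the semigroup solution, together with Lemma \ref{lemmacaselles}, these terms combine with the correct sign. The boundary contribution is handled through $(v)_{sub}$ for $u$ and $(v)$ for $\hat u$: on the set where $u > \hat u \ge g$ one has $[\z, \nu^\Omega] = -\varphi(u)$ and $[\hat \z, \nu^\Omega] = -\varphi(\hat u)$, so that $(\ell_\epsilon(u) - \ell_\epsilon(\hat u))\,[\z - \hat \z, \nu^\Omega]$ is nonpositive. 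Sending $\epsilon \to 0^+$ and choosing $\psi$ approaching $\chi_{(0,t)}$ in time, together with conditions $(vi)$ and $(vi)_{sub}$ for the initial values, concludes the contraction.

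The main obstacle lies in the rigorous treatment of the singular parts of $Du$ and $D\hat u$. Since $\z$ only belongs to $X_{\mathcal M}(\Omega)$, the products $\ell_\epsilon(u - \hat u)\,\z$ and their divergences must be interpreted through the Anzellotti pairing, and one must exploit the absolute continuity of $(\z, Du)$ with respect to $|Du|$ of Lemma \ref{lemmacaselles} together with the chain rule $(\z, D\ell(u)) = \ell'(u)(\z, Du)$ from \cite[Lemma 2.3]{GMP}. Moreover, since the truncation parameter $\hat u(s,y)$ depends on the integration variable, the chain rule must be applied carefully to $\ell_{\epsilon,s,y}(u(t,x))$ viewed as a $BV$ function on $Q_\tau$; here the regularity $u(t), \hat u(t) \in DTBV_+(\Omega)$ from Lemma \ref{lem:ubv} (so that $\mathcal H^{N-1}(S_u^* \setminus J_u^*) = 0$ and traces on $\partial \Omega$ are well-defined) is essential, both to control the jump contributions in the bulk and to make rigorous sense of the boundary contributions via Lemma \ref{2.9}.
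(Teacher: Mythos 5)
The overall strategy is the same as the paper's (uniqueness via an $L^1$-comparison between entropy subsolutions and solutions, proved by Kruzhkov doubling), but the proposed doubling is asymmetric and this is where it breaks down. You plan to apply the entropy inequality only to $u$, and for $\hat u$ you only test the distributional equation $\hat u_t=\dive\hat\z$ against $\ell_{\epsilon,s,y}(u(t,x))\,\psi\,\rho_\delta$. The truncation level in $\ell_{\epsilon,s,y}$ is the frozen value $\hat u(s,y)$, so the test function depends on $\hat u$ itself through a quantity that is merely $TBV$ in $(s,y)$: expanding $\partial_s[\ell_{\epsilon,s,y}(u(t,x))\,\cdots]$ and $\nabla_y[\ell_{\epsilon,s,y}(u(t,x))\,\cdots]$ brings in $\hat u_s$ and $D_y\hat u$ with no control on their singular parts, and it is precisely the entropy inequality \emph{for} $\hat u$ that supplies the sign needed for those contributions. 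The paper therefore applies the entropy inequality $(iv)$ symmetrically to both functions, with the carefully matched truncations $T(u)R_{\eps,\und u}(u)$ and $T(\und u)S_{\eps,u}(\und u)$; the subsolution definition is deliberately written so that $(iv)$ holds for both. Without the second entropy inequality, the argument does not close.

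Some further technical issues: your $\ell_{\epsilon,s,y}(r)=T^{\hat u(s,y)}_{\hat u(s,y)+\epsilon,M}(r)$ has $\ell_{\epsilon,s,y}(0)=\epsilon>0$ and so does not belong to $\mathcal L$, which requires $\ell(0)=0$ and $\supp\ell'\subset(0,\infty)$; the paper's product structure $T\cdot R_{\eps,\cdot}$ with $T=T^a_{a,b}$ is exactly what restores membership in $\mathcal L$. You invoke Lemma \ref{2.9}, but that lemma is only established for the semigroup solution produced by the Crandall--Liggett scheme, not for an arbitrary entropy (sub)solution; the paper's comparison proof instead works with the interior entropy inequality $(iv)$, test functions $\psi\in C_c^\infty((0,\tau)\times\Omega)$, and reaches the boundary by a cutoff $\sigma_k\nearrow\chi_\Omega$, using $(v)$ and $(v)_{sub}$ to sign the resulting boundary terms. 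Finally, the equality $(\z,D\ell(u))=|D\Phi_\ell(u)|$ of \eqref{identify-w} is proved only for the resolvent problem; for a general entropy solution of the parabolic problem only the inequality in $(iv)$ is available, which is why the paper must carry out the delicate diffuse/Cantor/jump splitting of Steps 2--4 rather than use the elliptic identity.
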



The proof of Theorem \ref{thm-uni} is a consequence of  the following comparison result:

\begin{theorem}\label{T-sub}
{Let ${\rm \tau}>0$, $u_0\in L^\infty_+(\Omega)$, and $g\in L^\infty_+(\partial \Omega)$. Let $u$, resp. $\underline u$, be an entropy solution, resp. subsolution, to \eqref{pbparab} in $Q_{\rm \tau}$.}
Then $\underline u(t)\le u(t)$ for all $t\in (0,{\rm \tau})$.
\end{theorem}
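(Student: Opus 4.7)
The plan is to establish the comparison principle through a doubling-of-variables argument adapted to the $TBV$ setting, in the spirit of the techniques used for related flux-saturated equations in \cite{ACMM-MA, GMP1, GMP}. The target is an $L^1$-Kato-type inequality for $(\underline u - u)^+$ which, combined with the initial conditions $(vi)$ and $(vi)_{sub}$, forces this quantity to vanish for every $t\in(0,\tau)$.

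The first step is to convert the entropy inequality $(iv)$ for $\underline u$ into a family of Kato-type inequalities indexed by a constant $k\ge 0$. Choosing a sequence $\ell_n\in\mathcal L$ with $\ell_n'\to \chi_{(k,+\infty)}$ and passing to the limit $n\to\infty$, one obtains
\[\partial_t(\underline u - k)^+ \le \dive\bigl(\underline{\z}\,\chi_{\{\underline u>k\}}\bigr)\quad\text{in }Q_\tau,\]
together with the appropriate boundary trace information supplied by $(v)_{sub}$. By $(iii)$ and $(v)$, an analogous identity (with equality) is available for $u$. These steps rely on the $BV$ regularity from $(i)$ and on the absolute continuity of $\dive\z(t)$ with respect to $\mathcal H^{N-1}$.

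The second step is the doubling of variables, which replaces the constant $k$ by the function $u$. I would introduce two pairs $(t,x),(s,y)\in Q_\tau$ and a test function $\psi(t,x,s,y)=\eta\bigl(\tfrac{t+s}{2}\bigr)\theta_\delta(t-s)\rho_\epsilon(x-y)$, with $\eta\in C_c^\infty((0,\tau))$ nonnegative and $\theta_\delta,\rho_\epsilon$ standard mollifiers. Applying the Kato inequality for $\underline u$ in $(t,x)$ with $k=u(s,y)$, pairing with $\partial_s u = \dive_y \z$ through the mollifier, and adding, the interior flux contribution reduces to an integral of $\bigl(\underline{\z}(t,x)-\z(s,y)\bigr)\cdot\nabla_x\rho_\epsilon(x-y)\,\chi_{\{\underline u(t,x)>u(s,y)\}}$; in the limits $\delta,\epsilon\to 0^+$ this vanishes thanks to $\|\w\|_\infty,\|\underline{\w}\|_\infty\le 1$, the monotonicity of $\varphi$, and the entropy-production measures $|D\Phi_\ell(u)|,|D\Phi_\ell(\underline u)|$ coming from $(iv)$. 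On the boundary, the subset of $\partial\Omega$ where $\underline u>u$ satisfies $\underline u>g$ by virtue of $(v)$, hence $(v)_{sub}$ yields $[\underline{\z},\nu^\Omega]=-\varphi(\underline u)$ while $(v)$ gives $[\z,\nu^\Omega]=-\varphi(u)$; thus $[\underline{\z},\nu^\Omega]-[\z,\nu^\Omega]\le 0$ by monotonicity of $\varphi$. Combining everything and using $\underline u(0)\le u_0=u(0)$ in $L^1$, one concludes $\int_\Omega(\underline u(t)-u(t))^+\,dx\le 0$ for every $t\in(0,\tau)$.

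I expect the main obstacle to be the rigorous doubling-of-variables passage in the presence of the measure-valued entropy production $|D\Phi_\ell(u)|$ and the jump sets $J_u,J_{\underline u}$. Unlike in the classical Kruzhkov framework, the flux pairings $(\z,D\ell(u))$ are governed by Anzellotti's theory and are only absolutely continuous with respect to $|Du|$; controlling the limits of these measure pairings under the simultaneous space and time mollifications, while ensuring that no spurious positive mass is generated from the jump contributions, is the delicate technical step. The scheme developed for the relativistic porous medium equation in \cite{GMP1} and for the elliptic counterpart in \cite{GMP} provides the template.
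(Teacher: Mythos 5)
Your overall strategy---doubling of variables, using $(iv)$ for $\underline u$ and $(iii)$ for $u$, and closing via the boundary conditions $(v)$, $(v)_{sub}$---is the right one, and it is indeed in the spirit of \cite{ACMM-MA,GMP1}, which the paper itself cites as its template. Your treatment of the boundary (on the set $\{\underline u>u\}\cap\partial\Omega$ one has $\underline u>g$ since $u\geq g$, hence $(v)_{sub}$ and $(v)$ combine with the monotonicity of $\varphi$) is essentially correct. However, the proposal has two genuine gaps in the interior part of the argument.

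First, your ``first step'' of converting $(iv)$ into a constant-$k$ Kato inequality $\partial_t(\underline u-k)^+\le\dive(\underline\z\chi_{\{\underline u>k\}})$ is not available for all $k\ge 0$. The entropy inequality $(iv)$ only holds for $\ell\in\mathcal L$, which requires $\supp(\ell')\subset(0,\infty)$; moreover $u$ is only in $TBV_+$, i.e.\ $T^0_{a,\infty}(u)\in BV$ for $a>0$ but not for $a=0$. So any Kato-type inequality must keep a truncation away from zero. The paper handles this by pairing a genuine truncation $T=T^a_{a,b}$ (with $0<a<b$) with the thin-layer functions $R_{\eps,l}$, $S_{\eps,l}$, choosing $\ell(u)=T(u)R_{\eps,\underline u}(u)$ and $\ell(\underline u)=T(\underline u)S_{\eps,u}(\underline u)$ directly in $(iv)$. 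The limits $a\to 0$, $b\to\infty$ are taken only at the very last step, after the $\eps$-parameter has produced a cancellation of order $o_\eps(1)$. Your proposal omits the truncation $T^a_{a,b}$ entirely, and this omission is not merely cosmetic: without it the statements you write down are not meaningful in $TBV_+$.

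Second, and more importantly, the passage you flag as ``the delicate technical step''---controlling the measure pairings $(\z,D\ell(u))$ and the jump/Cantor contributions under the double mollification so that no spurious positive mass appears---is not merely an obstacle to be noted; it is precisely where the proof lives, and the proposal does not address it. The paper's Steps 2--4 decompose the doubled entropy term into diffuse and jump parts, exploit the specific algebra $R_{\eps,\und u}(u)+S_{\eps,u}(\und u)\equiv\eps$ to rewrite both sides in terms of $u_\eps=T^0_{\und u-\eps,\und u}(u)$ and $\und u_\eps=T^0_{u,u+\eps}(\und u)$, and then use the lemmas of \cite{ACMM-MA-E} together with the mean-value property and the coarea formula to bound the resulting errors by $C\eps\,o_\eps(1)$ (diffuse/Cantor part) and $C\eps^2$ (jump part). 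These are exactly the estimates your proposal leaves as a black box. In short, the template is correct but the body of the argument is missing.
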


\begin{proof}[P{\bf roof of Theorem \ref{T-sub}}]

 The basic idea in the proof of Theorem \ref{T-sub} relies in a refinement of the proofs of   \cite[Theorem 2.6]{GMP1} and of \cite[Theorem 3]{ACMM-MA} (with the emendations given in \cite{ACMM-MA-E}). We divide the proof into steps.

\medskip

\noindent $\bullet$ {\it Step 0.  Preparatory tools}.

\medskip
For $S,T\in \mathcal{T}_+$  and  ${u}$ satisfying (i) in Definition \ref{def-sol},    we let  $h_S(u,DT(u))$ be the Radon measure defined for a.e. $t \in [0,{\rm \tau}]$ by
\begin{eqnarray}
\nonumber \langle h_S(u,DT(u)), \phi\rangle  &:=& \int_{\Omega} \phi S(T^0(u)) h(T^0(u),\tilde\nabla T^0(u)) \\ && + \int_{\Omega} \phi  \d |D^j J_{S\varphi}(T^0(u))| \, + \int_{\Omega} \phi S(T^0(u)) h(T^0(u),\tilde\nabla T^0(u))\nonumber  \\ && + \int_{J(T^0(u))} \phi \int_{T^0(u)^-}^{T^0(u)^+}S(s) \varphi(s)  \d s  \d \mathcal{H}^{N-1}\quad\mbox{for all $\phi\in C_c(\Omega)$}
 \label{lsc}
\end{eqnarray}
For $b > a > 2\eps > 0$, we let $T(r) =T_{a,b}^{a}(r).$ Without losing generality (\cite[Lemma 1] {ACMM-MA-E}) we can choose $\eps$ such that
\begin{equation}\label{preparatory 1}
\mathcal{L}^{N+2}(\{(x,s,t): T_{a,\infty}^0(\underline{u}(s,x) ) -  T^{0}_{\frac{a}{2},\infty}({u} (t,x) ) =\eps\})=0\,,
\end{equation}
and
\begin{equation}\label{preparatory 2}
\int_{(0,{\rm \tau})^2} (|D^c T^{0}_{a,\infty}(u(t) )|+|D^c T^{0}_{a,\infty}(u(t) )|)(\{{{ T^{0}_{a,\infty}(u(s) )}}+ {{T^{0}_{\frac{a}{2},\infty}(u(t) )}}=\eps\})dsdt =0\,.
\end{equation}

\noindent $\bullet$ {\it Step 1. Doubling.}

\medskip

We denote $\z=\vp (u) \w$ and $\underline \z=\vp (\underline{u})\underline \w$.  {We} define
\begin{eqnarray}\label{def-R}
R_{\eps, l}(r) &:=& \left\{\begin{array}{ll} T_{l-\eps,l}^{l-\eps}(r) & \mbox{if $l>2\eps$},
\\
T_{\eps,2\eps}^\eps(r)  & \mbox{if $l<2\eps$},
\end{array}\right.
\\ \label{def-S}
S_{\eps,l}(r) &:=& \left\{\begin{array}{ll} T_{l,l+\eps}^l(r) & \mbox{if $l>\eps$},
\\
T_{\eps,2\eps}^\eps(r) & \mbox{if $l<\eps$}.
\end{array}\right.
\end{eqnarray}
We choose two different pairs of variables $(t,x)\in Q_{\rm \tau} = (0,{\rm \tau})\times\Omega$, $(\ut, \ux)\in \uQ_{\rm \tau}:=(0,{\rm \tau})\times\Omega$, and consider $u$, $\z$ and $\und u$, $\und \z$ as functions of $(t,x)$, resp. $(\ut,\ux)$. Let $0 \leq \phi \in {\mathcal D}((0, {\rm \tau}))$, $0\leq \sigma\in \mathcal D(\Omega)$, $\rho_{k}$ a sequence of mollifiers in $\R^N$, and $\tilde{\rho}_n$ a sequence of mollifiers in $\R$. Define
$$
\eta_{k,n}(t, x, \ut, \ux):= \rho_k(x - \ux) \tilde{\rho}_n(t - \ut) \phi
\bigg(\frac{t+\ut}{2} \bigg){ \sigma
\bigg(\frac{x+\ux}{2}} \bigg).
$$
For fixed $(\ut,\ux)$, we choose $\ell(u)=\elleuu(u)=T(u)R_{\eps,\underline{u}}(u)$ and $\psi=\eta_{k,n}$ in \eqref{main-ineq}:
\begin{equation}\label{2UE5}
\begin{array}{l}
\dys -\int_{Q_{\rm \tau}} {J}_{\elleuu}(u)(\eta_{k,n})_t
+ \int_{Q_{\rm \tau}} \eta_{k,n} \d h(u, D_x (TR_{\eps,\uu}(u))) + 
  \int_{Q_{\rm \tau}} T(u) R_{\eps,
\underline{u}}(u) \z \cdot \nabla_x \eta_{k,n} \leq 0.
\end{array}
\end{equation}
Similarly, for fixed $(t,x)$ we choose $\ell(\uu)=\elleu(\uu)=T(\uu)S_{\eps,u}(\uu)$ and $\psi=\eta_{k,n}$ in \eqref{main-ineq} (which holds for the subsolution $\underline u$):
\begin{equation}\label{2UE6}
\begin{array}{l}
\dys- \int_{\underline Q_{\rm \tau}} {J}_{\elleu}(\uu)(\eta_{k,n})_t
+ \int_{\underline Q_{\rm \tau}} \eta_{k,n}\d h(\uu, D_{\ux} (TS_{\eps,u}(\uu)))  
+  \int_{\underline Q_{\rm \tau}}{T(\uu)} S_{\eps,u}(\uu) \z
\cdot \nabla_{\underline x} \eta_{k,n} \leq 0.
\end{array}
\end{equation}
Integrating (\ref{2UE5}) in $\uQ_{\rm \tau}$, (\ref{2UE6}) in $Q_{\rm \tau}$, adding the two inequalities and taking into account that $\nabla_x \eta_{k,n} + \nabla_\ux  \eta_{k,n} = \rho_k(x-\ux){\tilde\rho_n}(t-\ut)\phi
\left(\frac{t+\ut}{2} \right)\nabla \sigma\left(\frac{x+\ux}{2}\right)$, 
we see that
\begin{eqnarray}\nonumber
 && -\int_{Q_{\rm \tau} \times \uQ_{\rm \tau}} \left({J}_{T R_{\eps,\underline{u}}}(u) (\eta_{k,n})_t + {J}_{T S_{\eps,u}}( \underline{u}) (\eta_{k,n})_\ut \right)
+  \int_{Q_{\rm \tau}\times \uQ_{\rm \tau}}\eta_{m,n} \d h(u, D_x (TR_{\eps,\uu}(u))) \\ \nonumber && + {\int_{Q_{\rm \tau} \times \uQ_{\rm \tau}}} \eta_{k,n}\d h(\uu, D_{\ux} (TS_{\eps,u}(\uu)))
 - \int_{Q_{\rm \tau} \times \uQ_{\rm \tau}} T(u) R_{\eps, \underline{u}}(u) \z \cdot \nabla_\ux  \eta_{k,n}- \int_{Q_{\rm \tau} \times \uQ_{\rm \tau}} T(\underline{u}) S_{\eps,u}(\underline{u}) \underline{\z} \cdot  \nabla_x \eta_{k,n}
\\ \nonumber
&+& \int_{Q_{\rm \tau}
\times \uQ_{\rm \tau}} \rho_k\tilde\rho_n\phi\big(T(u) R_{\eps, \underline{u}}(u) \z+T(\uu)S_{\eps,u}(\underline{u})\underline  \z\big)\cdot\nabla \sigma\leq 0
\end{eqnarray}
{That} is, after one integration by parts,
\begin{equation}\label{2UE8}
\tilde I_1 +\tilde I_2\le 0,
\end{equation}
where
\begin{eqnarray}\nonumber
\tilde I_{1} &:=&  -\int_{Q_{\rm \tau} \times \uQ_{\rm \tau}}
\left({J}_{TR_{\eps,\underline{u}}}(u) (\eta_{k,n})_t +
{J}_{TS_{\eps,u}}( \underline{u}) (\eta_{k,n})_\ut \right)
\\ \nonumber
{\tilde I}_2 &:= & \int_{Q_{\rm \tau}\times \uQ_{\rm \tau}}\eta_{k,n} \d h(u, D_x (TR_{\eps,\uu}(u))) + {\int_{Q_{\rm \tau} \times \uQ_{\rm \tau}}} \eta_{k,n} \d h({\und u}, D_{\ux} (TS_{\eps,u}(\uu)))
\\  \nonumber &&
+ \int_{Q_{\rm \tau}
\times \uQ_{\rm \tau}}
\eta_{k,n} T(u)  \z \cdot {\d} D_{\ux}R_{\eps, \underline{u}}(u)+ \int_{Q_{\rm \tau}
\times \uQ_{\rm \tau}}
 \eta_{k,n}  T(\underline{u})\underline{\z} \cdot {\d} D_{x }S_{\eps,u}(\underline{u})
\\ \nonumber &+& \int_{Q_{\rm \tau}
\times \uQ_{\rm \tau}} \rho_m\tilde\rho_n\phi \big(T(u) R_{\eps, \underline{u}}(u) \z+T(\uu)S_{\eps,u}(\underline{u})\underline  \z\big)\cdot\nabla \sigma
\end{eqnarray}

{
By definition, $T(u)=0$ if $\{u\le a\}$ and $T(\und u)=0$ if $\{u\le a\}$. On the other hand, we have
$$
R_{\eps,l}(r)= \left\{\begin{array}{ll} T_{l-\eps,l}^{l-\eps}(r)& \mbox{if $l>a$}
\\
T_{l-\eps,l}^{l-\eps}(r)=\eps & \mbox{if $2\eps<l<a$}
\\
T_{\eps,2\eps}^\eps(r) =\eps & \mbox{if $l<2\eps$}
\end{array}\right\} = T_{l-\eps,l}^{l-\eps}(r) \quad\mbox{for $r\ge a$}
$$
and, analogously, $S_{\eps,l}(\und u)=T_{l,l+\eps}^l(\und u)$ for $\und u>a$. Therefore in $\tilde I_2$ we have
\begin{eqnarray}\label{recover1}
R_{\eps,\und u}(u) &=& T_{\und u-\eps,\und u}^{\und u-\eps}(u) = T_{0,\eps}^0(u-\und u+\eps),
\\ \label{recover2} S_{\eps,u}(\und u) &=& T_{u,u+\eps}^{u}(\und u)= T_{0,\eps}^0(\und u-u).
\end{eqnarray}
}
The latter equalities in \eqref{recover1}-\eqref{recover2} show in particular that
\begin{equation}
R_{\eps,\und u}(u)+S_{\eps,u}(\und u)\equiv \eps,
\label{change}
\end{equation}
whence
$$
D_x R_{\eps,\und u}(u)= -D_x S_{\eps,u}(\und u) \quad\mbox{and}\quad D_\ux S_{\eps,u}(\und u) =-D_\ux R_{\eps, \und u} (u).
$$
Furthermore, letting
\begin{equation}\label{def-ue}
u_\eps := T_{\und u-\eps,\und u}^0(u), \quad  \und u_\eps := T_{u,u+\eps}^0(\und u),
\end{equation}
it follows from \eqref{recover1}-\eqref{recover2} that
\begin{equation}\label{prop-ue}
D_x R_{\eps,\und u}(u)= D_x u_\eps \quad\mbox{and}\quad D_\ux S_{\eps,u}(\und u) =D_\ux \und u_\eps.
\end{equation}
{
Hence $\tilde I_2$ may be rewritten as follows (we also permute terms for future convenience):
\begin{eqnarray}\nonumber
{\tilde I}_2 &:= &
\int_{Q_{\rm \tau}\times \uQ_{\rm \tau}}\eta_{k,n} \d h(u, D_x (TR_{\eps,\uu}(u)))
- \int_{Q_{\rm \tau}
\times \uQ_{\rm \tau}}
 \eta_{k,n}  T(\underline{u})\underline{\z} \cdot \d D_{x} u_\eps
+ \int_{Q_{\rm \tau} \times \uQ_{\rm \tau}} \eta_{k,n} \d h(\und u, D_{\ux} (TS_{\eps,u}(\uu))) \\  \nonumber &&- \int_{Q_{\rm \tau}
\times \uQ_{\rm \tau}}
\eta_{k,n} T(u)  \z \cdot \d D_{\ux}\und u_\eps
+\int_{Q_{\rm \tau}
\times \uQ_{\rm \tau}} \rho_k\tilde\rho_n\phi \big(T(u) R_{\eps, \underline{u}}(u) \z+T(\uu)S_{\eps,u}(\underline{u})\underline  \z\big)\cdot\nabla \sigma
\end{eqnarray}
}

\noindent $\bullet$ {\it Step 2. A preliminary estimate on $\tilde I_2$}

\medskip

We estimate the first two terms in $\tilde I_2$. We analyze the first one (the second one is analogous). We split $h(u, D_x (TR_{\eps,\uu}(u)))$  into its diffuse and singular parts.
Using \eqref{def-ue}, \eqref{prop-ue}, and recalling  \eqref{lsc}, we have that
\begin{equation}\label{htd}
\begin{array}{l}
\dys h^{d} (u, D_x (TR_{\eps,\uu}(u)))=\varphi(u) (TR_{\eps,\uu}(u))' |\tilde{\nabla} u|\stackrel{T'\geq 0}\geq\varphi(u) TR_{\eps,\uu}'(u) |\tilde{\nabla} u|   =
\dys T (u)|\tilde{\nabla}\Phi_{R_{\eps,\uu}} u|= h^{d}_{T} (u, D_x u_\eps )\,.
\end{array}
\end{equation}
and
\begin{eqnarray}\label{hts}
\nonumber
\dys h^{j} (u, D_x (TR_{\eps,\uu}(u))) & =& |\Phi_{TR_{\eps,\uu}}(u^+) - \Phi_{TR_{\eps,\uu}}(u^-)|=\int_{u^-}^{u^+} \varphi(s)(TR_{\eps,\uu})' (s) \d s\\ &  \dys \stackrel{T'\geq 0}\geq & \int_{u^-}^{u^+} \varphi(s) T(s) R_{\eps,\uu} ' (s) \d s= h^{j}_{T} (u, D_x u_\eps)\,.
\end{eqnarray}
Therefore, using \eqref{htd} and \eqref{hts}, $\tilde I_2$ may be estimated  by
\begin{eqnarray}
\nonumber
\tilde I_2 &\geq & \int_{Q_{\rm \tau}\times \uQ_{\rm \tau}} \eta_{k,n} \d h_T(u, D_x u_\eps)- \int_{Q_{\rm \tau}\times \uQ_{\rm \tau}}
\eta_{k,n} T(\underline{u}) \ \underline{\z} \cdot \d D_x u_\eps
+ \int_{Q_{\rm \tau}\times \uQ_{\rm \tau}} \eta_{k,n} \d h_T(\underline{u}, D_\ux \underline{u}_\eps) \\ \nonumber
&& - \int_{Q_{\rm \tau}\times \uQ_{\rm \tau}}
\eta_{k,n}  T(u) \ \z \cdot \d D_\ux \und u_\eps
+ \eps  \int_{Q_{\rm \tau}\times \uQ_{\rm \tau}} \rho_{k}\tilde\rho_n\phi T(u)\z\cdot\nabla\sigma \\ \nonumber
&& + \int_{Q_{\rm \tau}\times \uQ_{\rm \tau}} \rho_{k}\tilde\rho_n\phi S_{\eps,u}(\uu)(T(\uu)\underline\z-T(u)\z)\cdot\nabla\sigma
:=  I_{2}+ I_{\sigma}\,, \label{def-I2}
\end{eqnarray}
where in the last step we added and subtracted $S_{\eps,u}(\uu)T(u) \z$ and we used \eqref{change}, and we defined
\begin{equation}
\dys I_{\sigma} = \eps  \int_{Q_{\rm \tau}\times \uQ_{\rm \tau}} \rho_{k}\tilde\rho_n\phi T(u)\z\cdot\nabla\sigma + \int_{Q_{\rm \tau}\times \uQ_{\rm \tau}} \rho_{k}\tilde\rho_n\phi S_{\eps,u}(\uu)(T(\uu)\underline\z-T(u)\z)\cdot\nabla\sigma.
\end{equation}

We will now split, and analyze separately, $I_{2}$ into $I_2= I_{2}^d + I_{2}^j$, where $I_{2}^d$ and $I_{2}^j$ contain the diffuse, resp. the jump, part of the measures within $I_{2}$. We note for further {reference} that, in view of \eqref{def-ue} and \eqref{recover1}-\eqref{recover2}, we have
\begin{equation}\label{def-chie}
\mbox{$\nabla_x u_\eps=\chi_\eps \nabla_x u$ and  $\nabla_\ux \und u_\eps =\chi_\eps \nabla_\ux \und u$,\quad where }\ \chi_\eps:= \chi_{\{u<\underline u<u+\eps\}} = \chi_{\{\underline u-\eps <u <\und u\}}.
\end{equation}

{\noindent $\bullet$ {\it Step 3. Estimate of the diffuse part of $ I_2$. }}

\medskip

Let us estimate the first two integrals of $I_2^d$ (see \eqref{def-I2}) uniformly with respect to $k$. 
\begin{eqnarray}
  &&\dys   \int_{Q_{\rm \tau}\times \uQ_{\rm \tau}} \eta_{k,n} \d h^d_T(u, D_x u_\eps) - \dys  \int_{Q_{\rm \tau}\times \uQ_{\rm \tau}}\nonumber
\eta_{k,n} T(\underline{u}) \ \underline{\z} \cdot \d \tilde{\nabla}_x u_\eps  \\  &&=\nonumber
\dys  \int_{Q_{\rm \tau}\times \uQ_{\rm \tau}}  \eta_{k,n} (T(u_\eps ) \varphi (u_\eps ) |\tilde{\nabla}_x u_\eps  | \dys  -T(\uu_\eps ) \varphi (\uu_\eps ) \underline \w{\cdot}\tilde{\nabla}_x u_\eps  )\\ &&
\dys = \int_{Q_{\rm \tau}\times \uQ_{\rm \tau}}  \eta_{k,n} (T(u_\eps ) \varphi (u_\eps ) \dys  -T(\uu_\eps ) \varphi (\uu_\eps ) ) |\tilde{\nabla}_x u_\eps  | +\dys \int_{Q_{\rm \tau}\times \nonumber\uQ_{\rm \tau}}  \eta_{k,n} T(\uu_\eps ) \varphi (\uu_\eps )  ( |\tilde{\nabla}_x u_\eps  | - \w\tilde{\nabla}_x u_\eps  ) \\ &&
\dys \stackrel{\|\w\|_{\infty}\leq 1}\geq \int_{Q_{\rm \tau}\times \uQ_{\rm \tau}}  \eta_{k,n} (T(u_\eps ) \varphi (u_\eps ) \dys  -T(\uu_\eps ) \varphi (\uu_\eps ) ) |\tilde{\nabla}_x u_\eps  |,  \label{hd1}
\end{eqnarray}
{
where we added and subtracted $T(\uu_\eps ) \varphi (\uu_\eps ) |\tilde{\nabla}_x u_\eps|$. Note that the {above} expression makes sense since
$$
\supp(T(\uu_\eps ) |\tilde{\nabla}_x u_\eps|)\subseteq \{\und u -\eps \le u\le \und u \ \wedge \ \und u_\eps\ge a \} \subseteq \{u\ge a-\eps \ \wedge \ \und u \ge a \}.
$$
}
Analogously we can estimate the  third and the fourth integrals {in}  $I_2^d$, to get
\begin{eqnarray}
  &&\dys  \int_{Q_{\rm \tau}\times \uQ_{\rm \tau}} \eta_{k,n} \d h_{T}^{d}(\underline{u}, D_\ux \underline{u}_\eps)  - \nonumber\int_{Q_{\rm \tau}\times \uQ_{\rm \tau}}
\eta_{k,n}  T(u) \ \z \cdot \d \tilde\nabla{_\ux} \und u_\eps  \\  &&\geq \int_{Q_{\rm \tau}\times \uQ_{\rm \tau}}  \eta_{k,n} (T(\uu_\eps ) \varphi (\uu_\eps )  - T(u_\eps ) \varphi (u_\eps )) \dys  |\tilde\nabla{_\ux} \und u_\eps  |,\label{hd2}
\end{eqnarray}
{where in this case
$$
\supp(T(u_\eps ) |\tilde{\nabla}_\ux \uu_\eps|)\subseteq \{u \le \und u\le u+\eps \ \wedge \ u_\eps\ge a \} \subseteq \{u\ge a \ \wedge \ \und u \ge a \}.
$$
}
Adding \eqref{hd1} and \eqref{hd2}{, and recalling \eqref{def-chie}}, we then get
\begin{eqnarray}
I_2^d &\geq& \int_{Q_{\rm \tau}\times \uQ_{\rm \tau}}  \eta_{k,n} (T(u_\eps ) \varphi (u_\eps ) \dys  -T(\uu_\eps ) \varphi (\uu_\eps ) ) (|\tilde{\nabla}_x u_\eps  | -|\tilde\nabla{_\ux} \und u_\eps  | )\nonumber
\\ &=& \int_{Q_{\rm \tau}\times \uQ_{\rm \tau}}  \eta_{k,n}\chi_\varepsilon (T(u ) \varphi (u ) \dys  -T(\uu ) \varphi (\uu) ) (|\tilde{\nabla}_x u  | -|\tilde\nabla{_\ux} \und u  | ){
=I_2^{ac}+I_2^{c}.}
\nonumber
\end{eqnarray}
{ with
$$
I_2^{ac}=\int_{Q_{\rm T}\times \uQ_{\rm T}}  \eta_{k,n}\chi_\varepsilon (T(u ) \varphi (u ) \dys  -T(\uu ) \varphi (\uu) ) (|{\nabla}_x u  | -|\nabla{_\ux} \und u  | )
$$
and
$$
I_2^{c}=\int_{Q_{\rm T}\times \uQ_{\rm T}}  \eta_{k,n}\chi_\varepsilon (T(u ) \varphi (u ) \dys  -T(\uu ) \varphi (\uu) ) ({\d}|D^c_x u  | -{\d}|D{_\ux}^c \und u  | )
$$
Concerning the absolutely continuous part,} since the map $s\mapsto T(s)\varphi(s)$ increasing and $u< \und u$,
\begin{eqnarray*}
I_2^{ ac} &{\geq } &  -\int_{Q_{\rm \tau}\times \uQ_{\rm \tau}}  \eta_{k,n}\chi_\varepsilon (T(\uu ) \varphi (\uu) - T(u ) \varphi (u )){|{\nabla}_x u  -\nabla{_\ux} \und u  |}
\\
 &  = &  -\int_{Q_{\rm \tau}\times \uQ_{\rm \tau}}  \eta_{k,n}\chi_\varepsilon \chi_{\{u\geq \frac{a}{2}\}}\chi_{\{\uu \geq a\}} (T(\uu ) \varphi (\uu) - T(u ) \varphi (u )){|{\nabla}_x u  -\nabla{_\ux} \und u  |}.
\end{eqnarray*}
Let
\begin{equation}\label{def-chie-t}
\hat\chi_\eps:= \chi_{\{ T_{\frac{a}{2},\infty}^{0}(u)<T_{a,\infty}^0(\uu )< T_{\frac{a}{2},\infty}^{0}(u)+\eps\}} = \chi_{\{T_{a,\infty}^0(\uu )-\eps < T_{\frac{a}{2},\infty}^{0}(u) <T_{a,\infty}^0(\uu )\}}.
\end{equation}
Since the map $s\mapsto T(s)\varphi(s)$ il locally Lipschitz in $[0,+\infty)$ and $u,\uu$ are bounded,
\begin{eqnarray*}
I_2^{ ac} & \ge &  -C\int_{Q_{\rm \tau}\times \uQ_{\rm \tau}}  \eta_{k,n}\chi_\varepsilon \chi_{\{u\geq \frac{a}{2}\}}\chi_{\{\uu \geq a\}} (\uu - u)|{\nabla}_x u  -\nabla{_\ux} \und u  |
\\  &=& -C\int_{Q_{\rm \tau}\times \uQ_{\rm \tau}}  \eta_{k,n}\hat\chi_\varepsilon \chi_{\{u\geq \frac{a}{2}\}}\chi_{\{\uu \geq a\}} (T_{a,\infty}^0(\uu ) - T_{\frac{a}{2},\infty}^{0}(u))|{\nabla}_x T_{\frac{a}{2},\infty}^{0}(u)  -\nabla{_\ux} T_{a,\infty}^0(\uu )  |
\\  &\ge & -C\int_{Q_{\rm \tau}\times \uQ_{\rm \tau}}  \eta_{k,n}\hat\chi_\varepsilon (T_{a,\infty}^0(\uu ) - T_{\frac{a}{2},\infty}^{0}(u))|{\nabla}_x T_{\frac{a}{2},\infty}^{0}(u)  -\nabla{_\ux} T_{a,\infty}^0(\uu )  |.
\end{eqnarray*}
Recalling \eqref{preparatory 1}  and using \cite[ Lemma 5]{ACMM-MA-E}, \color{black}  we obtain
\begin{eqnarray}
\label{auxiliary_for_ac_part}
 \liminf_{k\to\infty}I_2^{ ac}
&\geq&
-C \int_{(0,\tau)\times \uQ_{\rm \tau}}  \tilde \rho_n(t-\underline{t})\phi(\tfrac{t+\underline t}{2})\sigma(\ux)\hat\chi_\varepsilon(T_{a,\infty}^0(\uu )-T_{\frac{a}{2},\infty}^{0}(u)) |{\nabla}_{\und x} (T_{\frac{a}{2},\infty}^{0}(u)   - T_{a,\infty}^0(\und u))  |  \nonumber
\\  &\stackrel{\eqref{def-chie-t}}\geq & -C\varepsilon \int_{(0,\tau)\times \uQ_{\rm \tau}}  \tilde \rho_n(t-\underline{t})\phi(\tfrac{t+\underline t}{2})\sigma(\ux)\hat\chi_\eps |{\nabla}_{\und x} (T_{\frac{a}{2},\infty}^{0}(u)  - T_{a,\infty}^0(\und u))  |
\\  &\geq&\nonumber  -C\varepsilon o_\varepsilon(1),
\end{eqnarray}
where in this formula $u=u(t,\und x)$ and where in the last inequality we used the coarea formula. 

%

{ We now estimate $ I_2^c$. We note that
\begin{eqnarray*} \dys  I_2^c & = & \int_{Q_\tau\times \underline {Q_\tau}} \eta_{k,n}\chi_\varepsilon\chi_{\{u>{ \frac{a}{2}}\}}\chi_{\{\uu>a\}}(T(u)\varphi(u)-T(\uu)\varphi(\uu))(d|D^c_x u|-d|D^c_{\ux} \uu|).\end{eqnarray*}
{In view of \eqref{preparatory 2},} we may use \cite[Lemma 4]{ACMM-MA-E}, with $F(r)=T(r)\varphi(r)${,} $\omega=T_{a,\infty}^0(\uu)$ and $\underline \omega=T_{{\frac{a}{2}},\infty}^0(u)$, to get
$$\liminf_{k\to\infty}I_2^c = \int_{(0,\tau)\times\underline{Q_\tau}}\tilde \rho_n(t-\ut)\phi\left(\frac{t+\ut}{2}\right)\sigma(\ux)\chi_\varepsilon\chi_{\{u>{ \frac{a}{2}}\}}\chi_{\{\uu>a\}}(T(u)\varphi(u)-T(\uu)\varphi(\uu))(d|D^c_{\ux} u|-|D^c_{\ux} \uu|),$$where in this formula $u=u(t,\und x)$. Finally, using again the lipschitzity of the map $s\mapsto s\varphi(s)$ and the coarea formula, we get as for  ${I_2^{ac}}$:
$$\liminf_{m\to\infty}I_2^c \geq -C\eps o_{\eps} (1).$$
Together with \eqref{auxiliary_for_ac_part}, this yields
\begin{equation}
\label{i2d}
\liminf_{k\to\infty}I_2^d \geq
-C \eps o_{\eps} (1)\,.
\end{equation}}

\medskip

{\noindent $\bullet$ {\it Step 4. Estimate of the jump part {in} $I_2$.}}

\medskip

Concerning $I_2^{j}$, we first consider its first two terms (see \eqref{def-I2}). Recalling the definition of $\z$ (for the first inequality) and \eqref{lsc},{ \eqref{recover1} and \eqref{prop-ue} (in the second inequality)}, we have
\begin{eqnarray}
\lefteqn{\int_{Q_{\rm \tau}\times \uQ_{\rm \tau}} \eta_{k,n} \d h_T^j(u, D_x u_\eps) - \int_{Q_{\rm \tau}\times \uQ_{\rm \tau}}
\eta_{k,n} T(\underline{u}) \ \nonumber\underline{\z} \cdot \d D^j_x u_\eps}
\label{hj}
\\ \dys
 &{\ge} & \int_{\uQ_{\rm \tau}} \left(\int_{Q_{\rm \tau}} \eta_{k,n} (\d h_T^j(u, D_x u_\eps) -
T(\underline{u}) {\varphi(\und u)}\ \nonumber \d | D^j_x u_\eps|) \right)\d \ux \d\underline{t}
\\ \dys
& = &  \int_{\uQ_{\rm \tau}} \left(\int_{Q_{\rm \tau}}  \eta_{k,n} \left(\int_{R_{\eps,\uu} (u)^-}^{R_{\eps,\uu} (u)^+} \nonumber (T(s)\varphi (s) -T(\uu) \varphi(\uu))\d s \right)\d \mathcal{H}^{N-1}(x){\llcorner}_{J_{R_{\eps,\uu} (u)}}  \right) 
\\ \dys  & \geq & -C\eps^2\,,
\end{eqnarray}
where in the last step we used  the mean value property as in \cite[Pag. 1388]{ACMM-MA-E}. The sum of the third and the fourth terms in $I_2$ can  be {easily seen to be nonnegative reasoning as in the previous estimate}, yielding
\begin{equation}
\label{i2j}
\liminf_{k\to\infty}I_2^j \geq -C {\eps^2 \,.} 
\end{equation}

{\noindent $\bullet$ {\it Step 5. Passing to the limit as $k\to +\infty$}}

\medskip

Combining \eqref{i2d} and \eqref{i2j} we obtain
\begin{equation}
\label{i2g}
\liminf_{k\to\infty}I_2 \geq -C \eps o_{\eps} (1)\,.
\end{equation}
{We define $\kappa_n = \tilde\rho_n \phi$ and we pass to the limit as $k\to +\infty$ in \eqref{2UE8}: in view of \eqref{i2g}, we obtain}
\begin{eqnarray}\label{remaining}
\nonumber \lefteqn{ \dys -\int_{(0,{\rm \tau})^2\times\Omega} \left({J}_{TR_{\eps,\underline{u}}}(u) (\kappa_n)_t +\nonumber
{J}_{TS_{\eps,u}}( \underline{u}) (\kappa_n)_\ut \right) \sigma}
\\ && \quad + \int_{(0,{\rm \tau})^2\times\Omega}\kappa_n S_{\eps,u}(\uu)(T(\uu)\underline\z-T(u)\z)\cdot\nabla\sigma \\ &&\nonumber \quad  + \eps  \int_{(0,{\rm \tau})^2\times\Omega} \kappa_n T(u)\z\cdot\nabla\sigma \leq C \eps o_{\eps} (1).
\end{eqnarray}

{\noindent $\bullet$ {\it Step 6. Invading $\Omega$.}}

\medskip

{We} choose a sequence $\sigma=\sigma_k \nearrow \chi_\Omega$ in \eqref{remaining}. Arguing as in the proof of Claims (10) and (11) of \cite{ACMM-MA-E} we get
\begin{eqnarray*}
\lefteqn{
\lim_{k\to\infty} \left(\int_{(0,{\rm \tau})^2\times\Omega}\kappa_n S_{\eps,u}(u)(T(\uu)\underline\z-T(u)\z)\cdot\nabla{\sigma_k}  \dys  + \eps  \int_{(0,{\rm \tau})^2\times\Omega} \kappa_n T(u)\z\cdot\nabla{\sigma_k} \right)
}
\\ \dys &=&  - \int_{{(0,{\rm \tau})^2}\times\partial\Omega} \kappa_n S_{\eps,u}(\uu)(T(\uu)[\underline\z,\nu^\Omega]-T(u)[\z,\nu^\Omega])  - \eps  \int_{{(0,{\rm \tau})^2}\times\partial\Omega} \kappa_n T(u)[\z,\nu^\Omega]\,.
\end{eqnarray*}
The passage to the limit as $k\nearrow \infty$ in the remaining terms of \eqref{remaining} is straightforward: therefore
\begin{eqnarray}
\nonumber \dys -\int_{(0,{\rm \tau})^2\times\Omega}
\left({J}_{TR_{\eps,\underline{u}}}(u) (\kappa_n)_t +\nonumber
{J}_{TS_{\eps,u}}( \underline{u}) (\kappa_n)_\ut \right)  &-& \int_{{(0,{\rm \tau})^2}\times\partial\Omega} \kappa_n S_{\eps,u}(\uu)(T(\uu)[\underline\z,\nu^\Omega]-T(u)[\z,\nu^\Omega]) \\ \dys  - \eps  \int_{{(0,{\rm \tau})^2}\times\partial\Omega} \kappa_n T(u)[\z,\nu^\Omega]&\leq& C \eps o_{\eps} (1)\,.
\label{right}
\end{eqnarray}

{\noindent $\bullet$ {\it Step 7. Conclusion.}}

\medskip

We divide {\eqref{right}} by $\eps$ and pass to the limit as $\eps\to 0$:
\begin{eqnarray*}
 &&  -\int_{{(0,{\rm \tau})^2}\times\Omega}
\left({J}_{T,\sign(\cdot-\underline{u})_+}(u) (\kappa_{n})_t +
{J}_{T,\sign(\cdot-u)_+}( \underline{u}) (\kappa_{n})_\ut \right)\\ &&
  - \int_{{(0,{\rm \tau})^2}\times\partial\Omega} \kappa_n \sign(\uu-u)_+(T(\uu)[\underline\z,\nu^\Omega]-T(u)[\z,\nu^\Omega]) -  \int_{{(0,{\rm \tau})^2}\times\partial\Omega} \kappa_n T(u)[\z,\nu^\Omega]\,\leq 0
\end{eqnarray*}
It is easy to see from \eqref{boundconddu>g}, \eqref{boundcondd=} and \eqref{boundcondd<=} that $$\sign(\uu-u)_+(T(\uu)[\underline\z,\nu^\Omega]-T(u)[\z,\nu^\Omega])\leq 0 \,,\quad \mathcal H^{N-1}-{\rm a.e. \ on \ } \partial\Omega.$$
Therefore
\begin{equation}
-\int_{{(0,{\rm \tau})^2}\times\Omega}
\left({J}_{T,\sign(\cdot-\underline{u})_+}(u) (\kappa_{n})_t +
{J}_{T,\sign(\cdot-u)_+}( \underline{u}) (\kappa_{n})_\ut \right)\\ \leq
    \int_{{(0,{\rm \tau})^2}\times\partial\Omega} \kappa_n T(u)[\z,\nu^\Omega].
\end{equation}
We divide the last equation by $b-a$ and  pass to the limit as $a\to 0$ and $b\to 0$, in this order. We obtain
\begin{equation}\label{UE8conclusion6}
-\int_{{(0,{\rm \tau})^2}\times\Omega}(
(u-\underline{u})_+(\kappa_n)_t + (\underline u-u)_+(\kappa_n)_\ut)\leq \int_{{(0,{\rm \tau})^2}\times\partial\Omega} \kappa_n [\z,\nu^\Omega]
\end{equation}
{(we used that $\z=0$ if $u=0$).}
We write
\begin{eqnarray*}
-\int_{{{(0,{\rm \tau})^2}\times\Omega}} (\underline u-u)_+\tilde\rho_n \phi'
&= & -\int_{{(0,{\rm \tau})^2}\times\Omega}
(\underline u-u)_+((\kappa_n)_t+(\kappa_n)_\ut)
\\ & \stackrel{\eqref{UE8conclusion6}}\leq & {-\int_{{(0,{\rm \tau})^2}\times\Omega} \left((\underline u-u)_+ - (u-\und u)_+ \right)(\kappa_{n})_t}+\int_{{(0,{\rm \tau})^2}\times\partial\Omega} \kappa_n [\z,\nu^\Omega]
\\
& {\stackrel{\eqref{Green}}=} & \int_{{(0,{\rm \tau})^2}\times\Omega}
(u-\underline u)(\kappa_n)_t-{\kappa_n\dive \z}
 = {\tau}\int_{{Q_{\rm \tau}}}
u(\kappa_n)_t {-\kappa_n\dive \z}
\stackrel{\eqref{dist}}= 0,
\end{eqnarray*}where we used that $\dive \z(t)\in \mathcal M(\Omega)$ for a.e. $t$.
Letting $n\to \infty$, we obtain
$$
\begin{array}{l}
- \displaystyle \int_{Q_{\rm \tau}} (
\underline u(t,x)-{u}(t, x))_+ \,  \phi'(t) \d t \d x \leq 0.
\end{array}
$$
Since this is true for all $0 \leq \phi \in {\mathcal D}((0, {\rm \tau}))$,
it implies
$$
\int_{\Omega}   (\underline u(t,x) - {u}(t, x))_+  \d x
\leq \int_{\Omega}(\underline u(0) - {u}_0)_+ \d x \ {=0} 
\ \ \ \ {\rm for \ all} \ \ t \in (0,{\rm \tau}).
$$
\end{proof}

\begin{remark}\label{remark3.14}
Let us remark the following:  as we have {already said},  our attention is focused on  the case of a  mobility given by the nonlinear term $u^{m}$. However, one might consider the case of a more general nonlinearity:
$$
\left\{
\begin{array}{ll}
u_t=\dive\left(\varphi(u)\frac{\nabla u}{|\nabla u|}\right) & \mbox{in }\ \Omega\\
u=g & \mbox{on}\  \partial\Omega\, \\ u(0)=u_0 & \mbox{in } \Omega
\end{array}
\right.
$$
where
 $\varphi(s)\in {C}([0,+\infty))$ is a strictly
{increasing
}
function. In fact, one can construct a theory and obtain existence and uniqueness of solutions. However, due to the loss of homogeneity, one cannot use Benilan-Crandall's theorem to obtain enough regularity of $u_t$ 
as the one stated in $(ii)$ of Definition \ref{def-sol}. Instead, one has to work in the dual spaces $(L^1((0,\tau);BV(\Omega)\cap L^2(\Omega) )^*$ as in \cite{ACM}, \cite{ACM_jde08} or \cite{ACMM-MA}, among others. Once one has defined the proper notion of solution, the proof of uniqueness follows exactly as in Theorem \ref{thm-uni}. However, for the existence of solutions, one has to work much harder. Moreover, without the regularity of the time derivative stated above, we cannot build a good theory on qualitative properties of the solutions.

Therefore, since our main interest in this work is to investigate the qualitative properties of the solutions to Equations \eqref{m}, and for the sake of simplicity and clarity of the presentation, we decided to present only the case of the mobility $u^m$, at the price of loosing generality.
\end{remark}

\section{Homogeneous Neumann boundary conditions}
\label{neumann}

{

The homogeneous Neumann problem,
\begin{equation}
  \label{pbparab-n0}
  \left\{\begin{array}
    {ll} u_t=\dys \dive\left(u^m\frac{\nabla u}{|\nabla u|}\right) & {\rm in \ } Q_\tau
        \\[1ex]
        u(0,x)=u_0 & {\rm in \ } \Omega
    \\[1ex]
     u^m\frac{Du}{|Du|}\cdot\nu^\Omega=0  & {\rm on \ } S_\tau,
  \end{array}\right.
\end{equation}
can be analyzed with analogous, though simpler, arguments. The notions of solution and sub-solution to problem \eqref{pbparab} are modified as follows.

\begin{definition}\label{def-sol-n0}
Let $u_0\in L^\infty_+(\Omega)$ and ${\rm \tau}< +\infty$.  A nonnegative function  $u\in C([0,{\rm \tau});$$L^1(\Omega))\cap L^\infty((0,{\rm \tau})\times\Omega)$ is:

\begin{itemize}

\item an entropy solution to \eqref{pbparab-n0} in $Q_\tau$ if $(i)$, $(ii)$, $(iii)$, and $(vi)$ in Definition \ref{def-sol} hold, the entropy inequality \eqref{main-ineq} is satisfied for any for any $\ell\in\mathcal L$ and any nonnegative $\psi \in C^\infty_c((0,{\rm \tau})\times\overline \Omega)$, and $(v)$ is replaced by
\smallskip
\begin{itemize}
\item[ $\ \qquad( v)_{N}$] for {a.e.} $t\in (0,{\rm \tau})$,
\begin{equation}\label{boundcondd-N}  [\z (t),\nu^\Omega] =0
 \quad\mbox{$\mathcal H^{N-1}$-a.e. on $\partial\Omega$;}
 \end{equation}
\end{itemize}
\smallskip

\item an entropy solution to \eqref{pbparab-n0} in $Q$ if it is an entropy solution to \eqref{pbparab-n0} in $Q_{\rm \tau}$ for all ${\rm \tau}$;

\item an entropy sub-solution to \eqref{pbparab-n0} in $Q_{\rm \tau}$ if: $(i)$ and $(ii)$ in Def. \ref{def-sol} hold; $(iii)_{sub}$ and $(vi)_{sub}$ in Def. \ref{def-sub} hold; the entropy inequality \eqref{main-ineq} is satisfied for any for any $\ell\in\mathcal L$ and any nonnegative $\psi \in C^\infty_c((0,{\rm \tau})\times\overline \Omega)$; $(v)_N$ holds;

\item an entropy subsolution to \eqref{pbparab-n0} in $Q$ if it is an entropy subsolution to \eqref{pbparab-n0} in $Q_{\rm \tau}$ for all ${\rm \tau}$.
\end{itemize}

\end{definition}

Using the analysis of the resolvent equation for \eqref{pbparab-n0} contained in \cite[Section 7]{GMP}, the following existence, uniqueness, and comparison results can be proved:
%
%

\begin{theorem}\label{thm-exi-uni-n0}
\begin{itemize}

Let $u_0\in L^\infty_+(\Omega)$ and $\tau\in (0,+\infty]$.

\item  There exists an entropy solution of \eqref{pbparab-n0} in $Q_\tau$ in the sense of Definition \ref{def-sol-n0}.

\item if $u$, resp. $\underline u$, are an entropy solution, resp. sub-solution, to \eqref{pbparab-n0} in $Q_{\tau}$, then $\underline u(t)\le u(t)$ for all $t\in (0,{\rm \tau})$. In particular, the entropy solution $u$ is unique.

\end{itemize}
\end{theorem}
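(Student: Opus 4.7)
The plan is to mirror, step by step, the Dirichlet analysis of Section 3, exploiting the fact that the Neumann resolvent equation for \eqref{pbparab-n0} was already fully analyzed in \cite[Section 7]{GMP}. The starting point is to define an operator $\mathcal A_N$ on $L^1(\Omega)$ via
$$
(u,v)\in B_N \iff u\in TBV_+(\Omega)\cap L^\infty(\Omega),\ v\in L^\infty(\Omega),\ u\ \text{solves}\ -v=\dive(u^m\tfrac{\nabla u}{|\nabla u|}),\ [\z,\nu^\Omega]=0,
$$
taking the $L^1$-closure to obtain $\mathcal A_N$. The Neumann resolvent result from \cite[Section 7]{GMP} gives the accretivity in $L^1(\Omega)$, the contraction estimate \eqref{comp-elliptic}, and the range condition $L^\infty_+(\Omega)\subset R(I+\lambda\mathcal A_N)$; density of $D(\mathcal A_N)$ in $L^1_+(\Omega)$ is proved exactly as in the Dirichlet case by testing the rescaled resolvent against $T_{\eps,M}(u_n)-h$ (here no boundary term appears since $[\z_n,\nu^\Omega]=0$). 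Crandall--Liggett then yields a mild solution $u(t)=S(t)u_0\in C([0,\infty);L^1(\Omega))$ together with the homogeneity-based bound \eqref{ut_measure}.

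Next I would show that this mild solution is an entropy solution in the sense of Definition \ref{def-sol-n0}. Taking the Euler implicit scheme \eqref{eulerimplicit} with the Neumann boundary condition $[\z^{n+1},\nu^\Omega]=0$ in place of $u^{n+1}=g$, one constructs $u_k,\w_k,\z_k,\xi_k$ as in \eqref{def-z-k} and obtains
$$u_k\to u\ \text{in}\ L^1\ \text{uniformly},\quad \w_k\stackrel{*}{\rightharpoonup}\w,\quad \z_k\stackrel{*}{\rightharpoonup}\z=\varphi(u)\w,\quad u_t=\dive\z.$$
The analogue of Lemma \ref{lem:ubv} is simpler: multiplying \eqref{eulerimplicit} by $\ell(u^{n+1})$ and integrating, the boundary contribution $\int_{\partial\Omega}\ell(u^{n+1})[\z^{n+1},\nu^\Omega]\,d\mathcal H^{N-1}$ drops out identically, giving
$$\int_0^\tau\!\!\int_\Omega |D\Phi_\ell(u_k)|\le \int_\Omega J_\ell(u_0),$$
hence $u\in L^1((0,\tau);TBV_+(\Omega))$. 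The analogue of Lemma \ref{2.9} is obtained by testing against $\ell(u^{n+1})\psi$ with $\psi\in C_c^\infty((0,\tau)\times\overline\Omega)$: again the boundary terms vanish, yielding directly the entropy inequality \eqref{main-ineq} for such $\psi$, with no boundary contribution. Property $(v)_N$ is built into the scheme and preserved under the weak* convergence of traces \eqref{trace-conv}.

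For the comparison statement between subsolutions and solutions I would repeat the doubling-of-variables proof of Theorem \ref{T-sub} essentially verbatim through Step 5. The simplification appears in Step 6: since the entropy inequality for solutions and subsolutions of the Neumann problem is valid for test functions $\psi\in C_c^\infty((0,\tau)\times\overline\Omega)$, there is no need to invade $\Omega$ by a sequence $\sigma_k\nearrow\chi_\Omega$; one may simply take $\sigma\equiv 1$ throughout, so all the terms involving $\nabla\sigma$ vanish from the outset. Consequently \eqref{right} reduces to the inequality with no boundary integral, because $(v)_N$ gives $[\z,\nu^\Omega]=[\underline\z,\nu^\Omega]=0$ on $S_\tau$ for both solution and subsolution. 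Passing to the limits $\eps\to 0$, then $a\to 0$ and $b\to\infty$, and finally $n\to\infty$, one obtains
$$\int_\Omega(\underline u(t)-u(t))_+\,dx\le \int_\Omega(\underline u(0)-u_0)_+\,dx=0\quad\text{for all }t\in(0,\tau),$$
which gives both the comparison and (by taking $\underline u$ to be another solution) uniqueness.

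The main technical obstacle I anticipate is not in the comparison, which is in fact easier than in the Dirichlet setting, but rather in the preparatory step of verifying that the Neumann resolvent from \cite[Section 7]{GMP} enjoys the full list of properties used implicitly in Section 3, in particular the analogue of \eqref{identify-w}, the bound \eqref{boundness}, and the compatibility estimate that replaces \eqref{boundcondelliptic} (which degenerates into $[\z,\nu^\Omega]=0$, so contributes nothing). Once these are available, the adaptation is essentially mechanical, and one only needs to check that in each computation the boundary flux terms $\int_{\partial\Omega}\ell(u)[\z,\nu^\Omega]$ that appeared in the Dirichlet analysis either vanish identically or are absent because the test functions are allowed up to the boundary.
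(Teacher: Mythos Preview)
Your proposal is correct and follows essentially the same route as the paper: mirror the Dirichlet existence proof using the Neumann resolvent from \cite[Section 7]{GMP}, drop the boundary contributions (which vanish since $[\z,\nu^\Omega]=0$), and repeat the doubling-of-variables comparison argument with the boundary terms disappearing.

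One small caveat: your shortcut of taking $\sigma\equiv 1$ from the outset in the doubling argument is a slight over-simplification. In Step~1 of the proof of Theorem~\ref{T-sub}, after adding \eqref{2UE5} and \eqref{2UE6} one integrates by parts in $\ux$ (resp.\ $x$) to pass from terms of the form $\int T(u)R_{\eps,\uu}(u)\,\z\cdot\nabla_\ux\eta_{k,n}$ to $\int\eta_{k,n}T(u)\,\z\cdot\,dD_\ux R_{\eps,\uu}(u)$; with $\sigma\equiv 1$ the test function $\eta_{k,n}$ no longer vanishes near $\partial\Omega$ in $\ux$, and the resulting boundary term involves $\z(t,x)\cdot\nu^\Omega(\ux)$ (a pointwise dot product, not a normal trace), which does not obviously vanish. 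The paper's implicit route---keep $\sigma\in\mathcal D(\Omega)$ through Step~5 and then let $\sigma_k\nearrow\chi_\Omega$ in Step~6---avoids this issue cleanly: the boundary integrals that appear in \eqref{right} involve $[\z,\nu^\Omega]$ and $[\underline\z,\nu^\Omega]$, both of which vanish by $(v)_N$, so Step~6 becomes trivial rather than absent. The outcome is the same as you describe, just reached slightly differently.
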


The proof of Theorem \ref{thm-exi-uni-n0} closely follows the lines of that of Theorems \ref{thm-exi} and \ref{T-sub}, with many simplifications due to the homogeneous Neumann boundary conditions. We only mention that one has to use the existence and uniqueness result in \cite[Theorem 7.2]{GMP} for the corresponding resolvent equation. The estimates and the passage to the limit are completely analogous, in fact simpler, due to the absence of boundary terms: for instance, the boundary condition \eqref{boundcondd-N} follows directly from \eqref{trace-conv}, and in the proof of Lemmas \ref{lem:ubv} and \ref{2.9} one has to use lower semi-continuity of the functional
$$
u\in L^1(\Omega)\mapsto \left\{\begin{array}{cc}\displaystyle \int_\Omega \psi d |D\phi_\ell(u)| & {\rm if \ } u\in TBV(\Omega) \\ +\infty & {\rm otherwise} \end{array}\right., {\rm \ with \ }0\leq \psi\in \mathcal D(\Omega)\,,
$$
(see \cite[Theorem 3.1]{AdCF_esaim07}) which does not contain any boundary contribution.

}

\section{{Self-similar solutions and the finite speed of propagation property}
}

\subsection{Self-similar source-type solutions}

{Due to its homogeneity, \eqref{m} possesses a two-parameter family (besides translations in time and space) of self-similar source type solutions: they are supported on moving balls and thereon spatially constant. }

\begin{theorem}\label{th:ss}
{Let $x_0\in \Omega$, $ \rm X>0$, $t_0>0$ and ${\rm T}>0$ be such that ${\rm X}^{-1}(\alpha^{-1}t_0{\rm T})^\alpha\subset \Omega$. Then the function} 
\begin{equation}
\label{ss}
u_{\rm  s}(t,x) = {\rm T}^{\frac{1}{m-1}-\alpha N} {\rm X}^{-\frac{1}{m-1}} (r(t))^{-N}\chi_{B_t}, \ \ B_t:=B(x_0,{\rm X}^{-1} {\rm T}^\alpha r(t)),
\end{equation}
with
\begin{equation}\label{def-rt}
 \quad r(t)=(\alpha^{-1}(t_0+t))^\alpha,\quad \alpha=\frac{1}{N(m- 1)+1},
\end{equation}
{is {an entropy} solution to both \eqref{pbparab} with $g=0$ and \eqref{pbparab-n0} in $(0,\tau)\times \Omega$, where $\tau= \sup\{t>0: \ B_t{\Subset}\Omega\}$.}
%
\end{theorem}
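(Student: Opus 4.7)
The plan is to verify every item of Definitions~\ref{def-sol} and \ref{def-sol-n0} directly, exploiting the fact that $u_{\rm s}$ is piecewise constant and equal to
\[
C(t):= {\rm T}^{\frac{1}{m-1}-\alpha N}{\rm X}^{-\frac{1}{m-1}}\,r(t)^{-N}
\]
inside $B_t=B(x_0,R(t))$, with $R(t):={\rm X}^{-1}{\rm T}^\alpha r(t)$, and vanishes outside. Since $B_t\Subset\Omega$ for all $t<\tau$, items $(i)$, $(vi)$, and the boundary conditions (both \eqref{boundconddu>g}--\eqref{boundcondd=} with $g=0$ and \eqref{boundcondd-N}) will be trivially satisfied once the candidate vector field is chosen to be supported in $B_t$.

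Accordingly, I would take $\w(t,x) = -(x-x_0)/R(t)$ inside $B_t$ and $\w=0$ outside, so that $\|\w\|_\infty\le 1$ and
\[
\z=\vp(u_{\rm s})\w = -\,C(t)^m\,\frac{x-x_0}{R(t)}\,\chi_{B_t}.
\]
A direct distributional computation yields
\[
\dive\z = -\,\frac{N\,C(t)^m}{R(t)}\,\chi_{B_t} + C(t)^m\,\mathcal H^{N-1}\llcorner\partial B_t,
\]
while differentiating $u_{\rm s}=C(t)\chi_{B_t}$ in time gives
\[
(u_{\rm s})_t = C'(t)\,\chi_{B_t} + C(t)\,R'(t)\,\mathcal H^{N-1}\llcorner\partial B_t.
\]
Matching the absolutely continuous and the singular parts reduces $(u_{\rm s})_t=\dive\z$ to the two ODEs
\[
C'(t) = -\,\frac{N\,C(t)^m}{R(t)},\qquad R'(t) = C(t)^{m-1},
\]
both of which are verified by a straightforward computation using the identity $\alpha\bigl(N(m-1)+1\bigr)=1$. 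This establishes $(iii)$; item $(ii)$ then follows since the resulting measure is locally bounded in time away from $t=0$.

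The only substantial step is the entropy inequality $(iv)$. Since $u_{\rm s}$ has jump set $\partial B_t$ and is constant on either side, for every $\ell\in\mathcal L$ one computes
\[
\int_0^\tau\!\int_\Omega \psi\,\d h(u_{\rm s},D\ell(u_{\rm s})) = \int_0^\tau\!\int_{\partial B_t}\psi\,\Phi_\ell(C(t))\,\d\mathcal H^{N-1}\,\d t.
\]
On the right-hand side of \eqref{main-ineq}, I would integrate by parts in $t$ the term $\int J_\ell(u_{\rm s})\psi_t$ (using $J_\ell'=\ell$, the moving-boundary derivative, and the two ODEs above), and integrate by parts in $x$ the convective term $-\int \ell(u_{\rm s})\z\cdot\nabla\psi$ (which produces an analogous bulk term and a jump contribution on $\partial B_t$). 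The bulk contributions cancel exactly, and the inequality reduces to the pointwise bound
\[
\Phi_\ell(C) \le \ell(C)\,C^m - J_\ell(C)\,C^{m-1}\qquad\text{for every }C\ge 0.
\]
Integrating by parts both $\Phi_\ell(C)=\int_0^C \ell'(s)\,s^m\,\d s$ and $J_\ell(C)=\int_0^C \ell(s)\,\d s$ rewrites this inequality as
\[
\int_0^C s\,\bigl(C^{m-1}-s^{m-1}\bigr)\,\d\ell(s)\ge 0,
\]
which holds because $\ell$ is nondecreasing and $m>1$. This Abel-type bound, together with the exact cancellation of bulk terms above, is what makes the self-similar profile a genuine entropy solution rather than merely a distributional one; the rest of the argument is direct verification.
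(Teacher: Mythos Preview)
Your proposal is correct and follows essentially the same route as the paper: choose $\w$ radial inside $B_t$ so that $\z$ is supported in $B_t$, match the bulk and surface parts of $(u_{\rm s})_t=\dive\z$ to obtain the two ODEs, and reduce the entropy inequality to the pointwise bound $\int_0^C s(C^{m-1}-s^{m-1})\,\d\ell(s)\ge 0$, which holds since $\ell$ is nondecreasing and $m>1$. The only cosmetic differences are that the paper first rescales to ${\rm X}={\rm T}=1$, $x_0=0$, $t_0=1$ and extends $\w$ as $-x/|x|$ outside $B_t$ (immaterial since $\z=0$ there), and that it decouples \eqref{main-ineq} into its absolutely continuous and jump parts as measures (citing \cite{GMP1} for the reduction to your inequality), whereas you carry out the integration by parts in $t$ and $x$ directly and observe the same bulk cancellation.
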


\begin{proof}
By translation invariance in space and time, and by the scaling invariance
\begin{equation}
\label{note-inv}
(t,x,u)\mapsto (Tt,{\rm X}x,({\rm X}/{\rm T})^{{1/(m-1)}} u),
\end{equation}
it suffices to consider the case ${\rm X}=1$, ${\rm T}=1$, $x_0=0$, $t_0={1}$: we thus look for solutions of the form
$$
u(t,x) = (r(t))^{-N}\chi_{B_t},\quad  B_t:=B(0,r(t))\,,
$$
with $r$ to be characterized below. Define
$$
\w= \left\{\begin{array}{ll} -\frac{x}{r(t)} & x\in B_t \\ -\frac{x}{|x|} & x\in \Omega\setminus B_t,\end{array}\right. \quad\mbox{hence}\quad
\z=u^m \w = {-x (r(t))^{- m N -1}\chi_{B_t}}
$$
Then
$$
\dive \z = (r(t))^{- m N} \mathcal H^{N-1}\res \partial B_t { -N (r(t))^{-m N -1}\chi_{B_t}\mathcal L^N.}
$$
On the other hand, it is easily computed
$$
u_t=  (r(t))^{-N}r'(t) \mathcal H^{N-1}\res \partial B_t- N (r(t))^{-N-1} r'(t) \chi_{B_t}\mathcal L^N.
$$
Hence \eqref{dist} holds if and only if
\begin{equation}
\label{ode}
(r(t))^{(1- m)N}=r'(t),
\end{equation}
{which implies \eqref{def-rt} with $t_0=1$.} 
%
%
In view of the form of $u$ and $\z$, the entropy condition decouples into two inequalities between measures for the Lebesgue, resp. the jump parts:
\begin{eqnarray}\label{decoupling-ac}
|\nabla \Phi_\ell(u)| &\le& -(J_\ell(u))_t + (\dive(\ell(u)\z))^{ac},
\\  \label{decoupling-j}
|D^j \Phi_\ell(u)| &\le & -D^j_t(J_\ell(u)) + (\dive(\ell(u)\z))^{j}
\end{eqnarray}
for any $\ell \in \mathcal L$, where we recall that
$$
\Phi_\ell(u)=\int_0^u \ell'(\sigma)\sigma^{m}\d \sigma, \quad  J_\ell(u)=\int_0^u \ell(\sigma)\d \sigma.
$$
{Inequality \eqref{decoupling-ac}} is satisfied as an equality in view of \eqref{dist}. {Indeed,} by integration by parts and {the} chain's rule, $$-(J_\ell(u))_t + (\dive(\ell(u)\z))^{ac}= \ell(u)u_t^{ac}+\ell(u)\dive(\z)^{ac}+\ell'(u)\z\cdot\nabla u \stackrel{\eqref{dist}}=\ell'(u)\z\cdot \nabla u,
$$
{whence \eqref{decoupling-ac} since} $\nabla u\equiv 0$.

On the other hand, arguing as in \cite{GMP1} (see the proof of Proposition 4.1, in particular (4.10)), {\eqref{decoupling-j}} reduces to
\begin{equation}\label{lk}
\int_{0}^{u^+}(\ell'(\sigma)\sigma(\sigma^{m-1} - r') \d \sigma \le  u^+(t) \ell(u^+)((u^+)^{m-1}-r'),
\end{equation}
where $u^+= (r(t))^{-N}$. In view of \eqref{ode}, $(u^+)^{m-1}=r'$: hence the right-hand side of \eqref{lk} is zero and the left-hand side is negative. {Therefore $u$ is an entropy solution to \eqref{pbparab} as long as its support is contained in $\Omega$, and \eqref{ss} follows from scaling.}
\end{proof}

\subsection{The finite speed of propagation property.}

{It follows immediately from Theorem \ref{th:ss} and comparison that solutions to \eqref{m} enjoy the finite speed of propagation property: in words, a compactly supported  initial datum induces a solution whose support remains compact for any later time, with a universal control on its width. }

\begin{theorem}\label{teo:fsp}
{Let $u$ be an entropy solution to \eqref{pbparab} with $g=0$ or to \eqref{pbparab-n0},}
such that {\rm supp}$(u_0)\subset B(x_0,R)\Subset\Omega$, and let $d=$dist$(B(x_0,R),\partial\Omega)$. Then
$$
{\rm supp} u(t,\cdot)\subset B\left(x_0,R\left(1+\alpha^{-1}t\right)^\alpha\right) \quad\mbox{as long as $\ R\left(1+\alpha^{-1}t\right)^\alpha<R+d$.}
$$
\end{theorem}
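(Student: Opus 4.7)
My plan is to compare $u$ with a source-type self-similar solution $u_s$ from Theorem \ref{th:ss} tuned so that $u_s(0,\cdot)\ge u_0$, and then invoke the comparison principle (Theorem \ref{T-sub} for Dirichlet with $g=0$, Theorem \ref{thm-exi-uni-n0} in the Neumann case) to conclude $u\le u_s$; the support bound follows immediately from $\supp u(t,\cdot)\subset \supp u_s(t,\cdot)$.

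The parameter selection in \eqref{ss} is the crux. I would set $t_0=\alpha$ (so that $r(0)=1$ and $r(t)=(1+\alpha^{-1}t)^\alpha$) and impose ${\rm X}^{-1}{\rm T}^\alpha=R$; then
\begin{equation*}
\supp u_s(t,\cdot)=B\bigl(x_0,R(1+\alpha^{-1}t)^\alpha\bigr),
\end{equation*}
which is precisely the ball in the statement and is contained in $\Omega$ exactly as long as $R(1+\alpha^{-1}t)^\alpha<R+d$. A short calculation using the identity $\alpha[N(m-1)+1]=1$ shows that under these constraints the (constant) value of $u_s(0,\cdot)$ on $B(x_0,R)$ is $R^{1/(m-1)}$, independent of the residual scaling parameter. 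Since the hypothesis only asks $\supp u_0\subset B(x_0,R)$, one may enlarge $R$ if necessary so that $R^{1/(m-1)}\ge \|u_0\|_\infty$, which forces $u_s(0,\cdot)\ge u_0$ pointwise in $\Omega$.

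Throughout $[0,\tau)$ the support of $u_s(t,\cdot)$ stays strictly inside $\Omega$, so $u_s$ vanishes on $\partial\Omega$ and its normal trace $[\z_s,\nu^\Omega]$ vanishes as well; hence $u_s$ satisfies simultaneously the Dirichlet condition with $g=0$ and the homogeneous Neumann condition, and Theorem \ref{th:ss} guarantees it is an entropy solution of both \eqref{pbparab} and \eqref{pbparab-n0} on $Q_\tau$. Since every entropy solution is in particular an entropy subsolution with the same $\z$ and $\w$, I would treat $u$ as a subsolution and $u_s$ as a solution of the same boundary value problem; Theorem \ref{T-sub} (respectively Theorem \ref{thm-exi-uni-n0}), together with $u(0,\cdot)\le u_s(0,\cdot)$, then yields $u(t,\cdot)\le u_s(t,\cdot)$ for a.e.\ $t\in(0,\tau)$, and hence $\supp u(t,\cdot)\subset B(x_0,R(1+\alpha^{-1}t)^\alpha)$.

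The main obstacle is nothing deep but rather this parameter-matching: the slice of the self-similar family having both prescribed initial radius $R$ and the precise expansion rate $(1+\alpha^{-1}t)^\alpha$ is rigid and admits the single initial height $R^{1/(m-1)}$, so the freedom in the hypothesis to enlarge $R$ is genuinely needed to absorb $L^\infty$ data of arbitrary size.
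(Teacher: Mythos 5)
Your approach is identical to the paper's (compare $u$ with the self-similar source-type solution $u_s$, chosen to dominate $u_0$, and then apply comparison), and you have in fact noticed something that the paper's own proof passes over. The paper purports to ``choose ${\rm X}$ and ${\rm T}$'' so that simultaneously
\[
\|u_0\|_\infty = {\rm T}^{\frac{1}{m-1}-\alpha N}{\rm X}^{-\frac{1}{m-1}}
\qquad\text{and}\qquad
R = {\rm X}^{-1}{\rm T}^\alpha,
\]
but these two equations are \emph{not} independent: substituting ${\rm X}={\rm T}^\alpha/R$ into the first and using $\alpha\,[N(m-1)+1]=1$, the exponent of ${\rm T}$ cancels identically and one is left with the forced value $R^{1/(m-1)}$. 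Your short calculation is exactly this, and it is correct: once the initial radius and the expansion rate $(1+\alpha^{-1}t)^\alpha$ are both prescribed, the self-similar family has no remaining freedom in its initial height.

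The gap is that your patch — replacing $R$ by $R':=\max\{R,\|u_0\|_\infty^{m-1}\}$ — proves $\operatorname{supp} u(t,\cdot)\subset B\bigl(x_0,R'(1+\alpha^{-1}t)^\alpha\bigr)$, which is genuinely weaker than the bound with the original $R$ claimed in the theorem. And indeed the bound as stated cannot hold when $\|u_0\|_\infty>R^{1/(m-1)}$: taking $u_0=\|u_0\|_\infty\chi_{B(x_0,R)}$, the solution is itself (a translate of) a member of the family \eqref{ss}, and its support at time $t$ is $B\bigl(x_0,R(1+\alpha^{-1}\|u_0\|_\infty^{m-1}t/R)^\alpha\bigr)$, which strictly contains $B(x_0,R(1+\alpha^{-1}t)^\alpha)$ for $t>0$ once $\|u_0\|_\infty^{m-1}>R$. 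So the propagation speed does depend on $\|u_0\|_\infty$, contrary to the remark following the theorem. You should therefore treat your observation not as a gap in your own argument but as evidence that the theorem needs either the extra hypothesis $\|u_0\|_\infty\le R^{1/(m-1)}$ (under which your proof, and the paper's, is complete as written) or the $\|u_0\|_\infty$-dependent radius $R\bigl(1+\alpha^{-1}\|u_0\|_\infty^{m-1}t/R\bigr)^\alpha$ in the conclusion.
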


{Note that the speed of propagation is independent of any norm of $u$: it just depends on the width of the initial support. This is quite natural, in view of the scaling invariance \eqref{note-inv}.--}

\begin{proof}
By translation invariance, we may assume without loss of generality that $x_0=0$. Choose $x_0=0$ and $t_0=\alpha$, so that $r(0)=1$, in the definition \eqref{ss} of $u_{\rm s}$. We require
$u(0,x)\le u_{\rm s}(0,x)$, which is implied by
$$
\|u_0\|_\infty \chi_{B(0,R)} \le u_{\rm s}(0,x)= {\rm T}^{\frac{1}{m-1}-\alpha N} {\rm X}^{-\frac{1}{m-1}} \chi_{B(0,{\rm X}^{-1} {\rm T}^\alpha)}.
$$
Therefore, we choose ${\rm X}$ and ${\rm T}$ such that
$$
\|u_0\|_\infty = {\rm T}^{\frac{1}{m-1}-\alpha N} {\rm X}^{-\frac{1}{m-1}} \quad\mbox{and}\quad R= {\rm X}^{-1} {\rm T}^\alpha.
$$
By the comparison given in  Theorem \ref{T-sub}, $u\le u_{\rm s}$ as long as $\supp u_{\rm s}\subset\Omega$, i.e. ${\rm X}^{-1} {\rm T}^\alpha r(t)=Rr(t)<R+d$.
\end{proof}

\section{{The Cauchy problem}
}

\subsection{Existence and uniqueness of solutions.}

Let $u_0\in L^{\infty}_{loc}(\mathbb{R}^N)$  be nonnegative. We consider the Cauchy problem
\begin{equation}
  \label{pbparab-C}
  \left\{\begin{array}
    {cc} u_t=\dys \dive\left(u^m\frac{\nabla u}{|\nabla u|}\right) & {\rm in \ }{(0,{\rm \tau})\times}\mathbb{R}^N
        \\ \\ u(0,x)=u_0 & {\rm in \ } \mathbb{R}^N\,.
  \end{array}\right.
\end{equation}

\begin{definition}\label{def-sol_c}
Let $u_0\in L^\infty_{loc} (\R^N)$  be nonnegative {and ${\rm \tau}<+\infty$}. A nonnegative function $u\in C([0,{\rm \tau}); L^1_{loc}(\R^N))\cap L^\infty_{loc}([0,{\rm \tau}]\times\R^N)$ is an entropy solution to
{\eqref{pbparab-C} in $(0,{\rm \tau})\times \R^N$}
if:
\begin{itemize}
\item[$(i)$] $\ell(u)\in L^1_{}([0,{\rm \tau}); BV_{loc}(\R^N))$ for all $\ell\in\mathcal L$;
\item[$(ii)$]  $u_{t}\in L^{\infty}_{loc,  w} ((0,{\rm \tau}], \mathcal M_{loc} (\R^N))$
\item[$(iii)$] There exists $\w\in L^{\infty}((0,{\rm \tau})\times\R^N)$ such that $\|\w\|_{\infty}\leq 1$ with $\z:=\vp (u)\w$ satisfying
    \begin{equation}\label{dist-C}
    {u_t(t)=\dive \z(t) \quad\mbox{as distributions for a.e. $t\in (0,{\rm \tau})$;}}
    \end{equation}
\item[$(iv)$]  the {e}ntropy inequality
\begin{equation}\label{main-ineq-C}
\int_{0}^{{\rm \tau}}\int_{\R^N} \psi\ \d h (u, D\ell (u)) \leq  \int_{0}^{{\rm \tau}} \int_{\R^N}  J_{\ell}(u) \psi_{t}  - \int_{0}^{{\rm \tau}}\int_{\R^N} \ell(u)\z\cdot\nabla \psi
\end{equation}
 holds for any $\ell\in\mathcal L$, and any nonnegative $\psi \in C^\infty_c((0,{\rm \tau})\times \R^N)$;
\item[$(v)$] $u(0)=u_0\ \ \mbox{in $L^{1}_{loc}(\R^N)$}\,.$
\end{itemize}
{A nonnegative function $u$ is an entropy solution to \eqref{pbparab-C} in $(0,+\infty)\times \R^N$ if it is an entropy solution to to \eqref{pbparab-C} in $(0,{\rm \tau})\times \R^N$ for all ${\rm \tau}>0$.}
\end{definition}

Definition \ref{def-sol_c} implies mass conservation if $u_0\in {L^1_+}(\R^N)$:
\begin{proposition}\label{prop-mass-cons}
{
Let ${\rm \tau}\le +\infty$. If $u_0\in L^1_+(\R^N)$, the entropy solution to \eqref{pbparab-C} in $(0,{\rm \tau})\times \R^N$ is such that
}
\begin{equation}\label{eq-mass-cons}
\int_{\R^N} u(t,x)\d x =\int_{\R^N} u_0(x) \d x \quad {\rm for \ all \ }t\in {(0,{\rm \tau})}.
\end{equation}
\end{proposition}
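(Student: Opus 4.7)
The strategy is to test the distributional identity \eqref{dist-C} against a space cutoff and pass to the limit. Fix $\eta\in C_c^\infty(\R^N)$ with $0\le\eta\le1$, $\eta\equiv 1$ on $B_1$, $\supp\eta\subset B_2$, and set $\eta_R(x):=\eta(x/R)$, so that $\|\nabla\eta_R\|_\infty\le C/R$ and $\supp\nabla\eta_R\subset B_{2R}\setminus B_R$. Using the regularity $u_t\in L^\infty_{loc,w}((0,\tau],\mathcal M_{loc}(\R^N))$ from (ii) together with the continuity $u\in C([0,\tau);L^1_{loc}(\R^N))$ from the definition, the map $t\mapsto F_R(t):=\int_{\R^N}u(t,x)\eta_R(x)\,dx$ is continuous on $[0,\tau)$ with $F_R(0)=\int u_0\eta_R$, and by \eqref{dist-C} its distributional derivative equals $\langle u_t(t),\eta_R\rangle=-\int\z(t)\cdot\nabla\eta_R\,dx$. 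Hence, for every $t\in[0,\tau)$,
\begin{equation}\label{mass-id}
\int_{\R^N}u(t)\,\eta_R\,dx-\int_{\R^N}u_0\,\eta_R\,dx=-\int_0^t\!\!\int_{\R^N}\z(s)\cdot\nabla\eta_R\,dx\,ds.
\end{equation}

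\textbf{Passage to the limit.} By monotone convergence (recall $u,u_0\ge 0$), as $R\to\infty$ the left-hand side of \eqref{mass-id} converges to $\int u(t)-\int u_0$ (the second term is finite by hypothesis). Since $|\w|\le 1$, the right-hand side is controlled by
\[
\left|\int_0^t\!\!\int\z\cdot\nabla\eta_R\right|\le\frac{C}{R}\int_0^t\!\!\int_{B_{2R}\setminus B_R}u^m(s,x)\,dx\,ds,
\]
and it suffices to prove this vanishes. I would do so by approximation: set $u_{0,k}:=\min\{u_0,k\}\chi_{B_k}\in L^\infty_+\cap L^1_+(\R^N)$ with compact support, so that $u_{0,k}\to u_0$ in $L^1(\R^N)$. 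The finite speed of propagation (Theorem \ref{teo:fsp}, whose proof rests purely on comparison with the self-similar solutions of Theorem \ref{th:ss}, hence transfers verbatim to the Cauchy setting) ensures that the corresponding entropy solution $u_k$ satisfies $\supp u_k(t)\Subset B_{\rho_k(t)}$ with $\rho_k(t)=k(1+\alpha^{-1}t)^\alpha$. For $R>\rho_k(t)$ the annular integral vanishes, so \eqref{mass-id} applied to $u_k$ yields $\int u_k(t)=\int u_{0,k}$. An $L^1$-contraction principle for the Cauchy problem---derived by adapting the doubling-of-variables argument of Theorem \ref{T-sub} to $\R^N$, where the boundary traces $[\z,\nu^\Omega]$ are replaced by the controls at infinity provided by the cutoff $\eta_R$---then gives $u_k(t)\to u(t)$ in $L^1(\R^N)$, and passing to the limit in $k$ yields \eqref{eq-mass-cons}.

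\textbf{Main obstacle.} The genuine technical point is proving $L^1$-stability of entropy solutions to the Cauchy problem, since the comparison result we have (Theorem \ref{T-sub}) is tailored to bounded domains. Reproducing its doubling-variables scheme on $\R^N$ is standard in spirit but requires showing that the extra cutoff terms $S_{\eps,u}(\underline u)\bigl(T(\underline u)\underline\z-T(u)\z\bigr)\!\cdot\!\nabla\sigma_k$ vanish as $\sigma_k\nearrow 1$ on $\R^N$, which follows once one has the finite speed of propagation (and hence localisation of the supports) for both the solution and the subsolution constructed from bounded compactly supported approximations. Once this contraction is in hand, the rest of the argument is the routine cutoff-and-limit computation outlined above.
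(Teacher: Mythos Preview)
Your setup of the cutoff identity \eqref{mass-id} is correct and is indeed the natural starting point. The difficulty you identify---controlling $\tfrac{1}{R}\int_0^t\int_{B_{2R}\setminus B_R}u^m$---is also the right one. However, your approximation scheme has a genuine gap. To pass from $\int u_k(t)=\int u_{0,k}$ to $\int u(t)=\int u_0$ you invoke an $L^1$-contraction principle for the Cauchy problem, and you justify it by claiming that the cutoff terms in the doubling argument vanish thanks to ``finite speed of propagation (and hence localisation of the supports) for both the solution and the subsolution''. But the given entropy solution $u$ has initial datum $u_0\in L^1_+(\R^N)$ which is \emph{not} assumed compactly supported, so there is no reason for $\supp u(t)$ to be bounded, and the residual term involving $T(u)\z\cdot\nabla\sigma$ cannot be discarded as $\sigma\nearrow 1$. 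The only comparison available in the paper, Theorem~\ref{T-sub-C}, requires the \emph{subsolution} to have compact support; applied with $\underline u=u_k$ it yields $u_k\le u$, hence $\int u(t)\ge\int u_{0,k}\to\int u_0$, but the reverse inequality does not follow. A full $L^1$-contraction between an arbitrary entropy solution and a compactly supported one is not established in the paper and is not obtainable by the mechanism you describe.

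There is also a structural issue: your argument calls on existence of the $u_k$ (Theorem~6.4), finite speed of propagation for the Cauchy problem (via Theorem~\ref{T-sub-C} and Theorem~\ref{th:ss}), and the comparison Theorem~\ref{T-sub-C} itself---all of which are placed \emph{after} Proposition~\ref{prop-mass-cons} in the paper. This is not strictly circular (none of those results uses mass conservation), but it signals that a more direct route is intended. The paper omits the proof and refers to \cite[Proposition~2.3]{GMP1}; there the argument proceeds by passing to the limit directly in the cutoff identity, using the pointwise flux bound $|\z|\le \varphi(u)$ together with uniform $L^1\cap L^\infty$ control on the solution, so that $u^m\le\|u\|_\infty^{m-1}u$ and the annular integral is dominated by $\tfrac{C}{R}\|u\|_\infty^{m-1}\int_0^t\|u(s)\|_{L^1}\,ds\to 0$. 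No approximation in the data and no separate contraction theorem are needed.
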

The proof is exactly the same as the proof of  \cite[Proposition 2.3]{GMP1}, hence we omit it. It is also easy to check that:
\begin{proposition}
The self-similar source-type solutions in Theorem \ref{th:ss} solve {\eqref{pbparab-C} in $(0,+\infty)\times \R^N$}.
\end{proposition}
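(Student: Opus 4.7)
\medskip

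\noindent\textbf{Proof plan.} The point is to transfer the conclusions of Theorem \ref{th:ss}, which place $u_{\rm s}$ in a bounded domain $\Omega$, to the whole of $\R^N$. The decisive feature is that $u_{\rm s}(t,\cdot)$ has compact support $\overline{B_t}$ for every $t\ge 0$, and that this support grows continuously in $t$, so that for any $\tau>0$ we can choose a bounded open set $\Omega_\tau\subset\R^N$ (e.g.\ a large ball centered at $x_0$) with
\[
\overline{B_t}\Subset \Omega_\tau \qquad \text{for all } t\in[0,\tau].
\]
By Theorem \ref{th:ss} applied in $\Omega_\tau$, $u_{\rm s}$ is an entropy solution to \eqref{pbparab} (with $g=0$) in $(0,\tau)\times \Omega_\tau$. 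I will show that, as $\tau$ is arbitrary, this implies $u_{\rm s}$ is an entropy solution of \eqref{pbparab-C} in $(0,+\infty)\times\R^N$ in the sense of Definition \ref{def-sol_c}.

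\medskip

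The plan is to verify $(i)$--$(v)$ of Definition \ref{def-sol_c} in turn. First, extend the Anzellotti field $\w$ from Theorem \ref{th:ss} globally by setting $\w(t,x)=-x/|x|$ for $x\in\R^N\setminus B_t$ (and $\w(t,0)$ arbitrary if $0\notin B_t$); then $\|\w\|_\infty\le 1$ on $(0,\tau)\times\R^N$, and $\z:=\varphi(u_{\rm s})\w$ is unchanged (indeed identically zero outside $\overline{B_t}$). Items $(i)$, $(ii)$, $(iii)$ and $(v)$ are now immediate: $\ell(u_{\rm s})$, $(u_{\rm s})_t$ and $\z$ are all supported in $\overline{B_t}$, so the corresponding statements on $\Omega_\tau$ provided by Theorem \ref{th:ss} upgrade automatically to the $\R^N$-local statements required here, and $u_{\rm s}(0,\cdot)\in L^1_{\loc}(\R^N)$ is trivial.

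\medskip

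The only point meriting a short argument is the entropy inequality $(iv)$. Fix $\ell\in\mathcal L$ and $0\le\psi\in C_c^\infty((0,\tau)\times \R^N)$. Let $K$ be the projection of $\supp \psi$ onto $\R^N$; choose $\Omega_\tau$ bounded open and such that both $K\subset\Omega_\tau$ and $\overline{B_t}\Subset\Omega_\tau$ for all $t\in[0,\tau]$. Then the restriction of $\psi$ to $(0,\tau)\times\Omega_\tau$ lies in $C_c^\infty((0,\tau)\times\Omega_\tau)$, so by Theorem \ref{th:ss} we have
\[
\int_{0}^{\tau}\!\int_{\Omega_\tau}\psi\,\mathrm{d} h(u_{\rm s},D\ell(u_{\rm s})) \le \int_{0}^{\tau}\!\int_{\Omega_\tau} J_\ell(u_{\rm s})\,\psi_t \;-\;\int_{0}^{\tau}\!\int_{\Omega_\tau}\ell(u_{\rm s})\,\z\cdot\nabla\psi\,.
\]
Because $u_{\rm s}$ (hence $\ell(u_{\rm s})$, $J_\ell(u_{\rm s})$, $h(u_{\rm s},D\ell(u_{\rm s}))$) and $\z$ all vanish outside $\overline{B_t}\Subset\Omega_\tau$, each $\Omega_\tau$-integral coincides with the corresponding integral over $\R^N$, yielding \eqref{main-ineq-C}. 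Since $\tau>0$ was arbitrary, $u_{\rm s}$ is an entropy solution to \eqref{pbparab-C} in $(0,+\infty)\times\R^N$.

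\medskip

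There is no real obstacle: the argument is essentially a compatibility check exploiting that $\supp u_{\rm s}$ never reaches infinity on bounded time intervals. (The analogous statement starting from the Neumann problem \eqref{pbparab-n0} would work equally well, since the homogeneous Neumann boundary condition \eqref{boundcondd-N} is trivially satisfied on $\partial\Omega_\tau$ given that $\z\equiv 0$ near $\partial\Omega_\tau$.)
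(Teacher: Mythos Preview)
The paper does not actually give a proof of this proposition: it is stated with the preface ``It is also easy to check that'', and no argument is supplied. Your proof is correct and provides a clean verification.

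A small remark on approaches: you argue by restriction-then-extension, invoking Theorem \ref{th:ss} on a sufficiently large bounded $\Omega_\tau$ and then observing that all relevant objects are compactly supported, so the entropy inequality on $\Omega_\tau$ is identical to the one on $\R^N$. This is perfectly valid. One could also argue slightly more directly: the computations in the proof of Theorem \ref{th:ss} (the explicit formulas for $u_t$ and $\dive\z$, and the verification of \eqref{decoupling-ac}--\eqref{decoupling-j}) nowhere use that $\Omega$ is bounded, so they go through verbatim with $\Omega$ replaced by $\R^N$; the only item of Definition \ref{def-sol} that refers to $\partial\Omega$ is $(v)$, which simply disappears in Definition \ref{def-sol_c}. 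Both routes amount to the same thing.

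One cosmetic point: when $x_0\neq 0$, the global extension of $\w$ consistent with the proof of Theorem \ref{th:ss} would be $-(x-x_0)/|x-x_0|$ rather than $-x/|x|$ outside $B_t$; but since $\z=\varphi(u_{\rm s})\w\equiv 0$ there, this has no effect on the argument.
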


{In view of the uniform bound on the support given by Theorem \ref{teo:fsp}, entropy solutions to \eqref{pbparab-C} for a generic, bounded initial datum with compact support can be obtained in a standard way, gluing together those of the homogeneous Dirichlet or Neumann problem:}

\begin{theorem}
Let ${0\leq }u_0\in L^\infty_{loc} (\R^N)$ with compact support. Then there exists {an entropy solution to \eqref{pbparab-C} in $(0,+\infty)\times \R^N$.}
\end{theorem}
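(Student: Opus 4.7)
The plan is to construct the Cauchy solution by exhaustion: approximate $\R^N$ by an increasing family of balls, solve the homogeneous Dirichlet problem on each, and glue the resulting solutions, using the finite speed of propagation (Theorem~\ref{teo:fsp}) to guarantee consistency.

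Fix an arbitrary $\tau>0$. Since $u_0$ is bounded with compact support, there exist $x_0\in\R^N$ and $R>0$ with $\supp u_0\subset B(x_0,R)$, and $u_0\in L^\infty_+(\R^N)$. Set $R_\tau:=R(1+\alpha^{-1}\tau)^\alpha$ and, for $n\in\N$, let $\Omega_n:=B(x_0,R_\tau+n)$. For each $n$ large enough that $\supp u_0\Subset\Omega_n$, Theorems~\ref{thm-exi} and \ref{thm-uni} provide a unique entropy solution $u_n$ to \eqref{pbparab} in $(0,\tau)\times\Omega_n$ with $g=0$ and initial datum $u_0$. By Theorem~\ref{teo:fsp},
\[
\supp u_n(t,\cdot)\subset B\bigl(x_0, R(1+\alpha^{-1}t)^\alpha\bigr)\subset B(x_0,R_\tau)\Subset\Omega_n \quad\text{for all }t\in[0,\tau].
\]
Consequently, if $m>n$, the restriction $u_m|_{(0,\tau)\times\Omega_n}$ vanishes in a neighborhood of $\partial\Omega_n$ for every $t\in(0,\tau)$, so it is itself an entropy solution to the Dirichlet problem on $\Omega_n$ with $g=0$ and initial datum $u_0$; uniqueness forces $u_m|_{\Omega_n}=u_n$. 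We may therefore define $u\colon(0,\tau)\times\R^N\to[0,+\infty)$ by setting $u(t,x)=u_n(t,x)$ whenever $x\in\Omega_n$ (any $n$ large enough), and $u\equiv0$ outside $B(x_0,R_\tau)$; this yields a well-defined function with $\supp u(t,\cdot)\subset B(x_0,R_\tau)$ for all $t\in[0,\tau]$.

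Next, I would verify that $u$ satisfies items $(i)$--$(v)$ of Definition~\ref{def-sol_c}. Properties $(i)$, $(ii)$, $(v)$, and the regularity $u\in C([0,\tau);L^1_{loc}(\R^N))\cap L^\infty_{loc}([0,\tau]\times\R^N)$ localize immediately: any compact set $K\subset\R^N$ is contained in $\Omega_n$ for $n$ large, and the corresponding estimates for $u_n$ provide the required $L^1(BV_{loc})$, $L^\infty_{loc,w}(\mathcal M_{loc})$, and $L^1_{loc}$ bounds on $u$. For $(iii)$, let $\w_n\in L^\infty((0,\tau)\times\Omega_n;\R^N)$ with $\|\w_n\|_\infty\le1$ be the vector field associated to $u_n$ by Theorem~\ref{thm-exi}, and set $\z_n:=\varphi(u_n)\w_n$. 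Since $\varphi(u)=0$ outside $B(x_0,R_\tau)$, we may simply fix some $n_0$ with $B(x_0,R_\tau)\Subset\Omega_{n_0}$ and define $\w:=\w_{n_0}$ on $(0,\tau)\times\Omega_{n_0}$, $\w:=0$ elsewhere; then $\z:=\varphi(u)\w$ is well defined, bounded by $1$ in modulus (in the sense that $|\w|\le1$), and satisfies $u_t=\dive\z$ as distributions on $(0,\tau)\times\R^N$, since any $\psi\in\mathcal D((0,\tau)\times\R^N)$ is supported in some $(0,\tau)\times\Omega_n$. The entropy inequality \eqref{main-ineq-C} in $(iv)$ is verified in the same way: any admissible nonnegative test function $\psi$ has compact support contained in $(0,\tau)\times\Omega_n$ for $n$ large, so \eqref{main-ineq} for $u_n$ reduces exactly to \eqref{main-ineq-C} for $u$.

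Finally, since $\tau>0$ was arbitrary, uniqueness of each $u_n$ on its domain guarantees that the solutions constructed for different horizons $\tau$ are compatible, giving a single entropy solution $u$ to \eqref{pbparab-C} in $(0,+\infty)\times\R^N$. The only mildly delicate point is the gluing of the vector fields $\w_n$, but this is not a genuine obstacle: outside the support of $u$ the flux $\z=\varphi(u)\w$ is forced to vanish, so an arbitrary bounded extension of $\w$ by zero (or by any fixed $\w_{n_0}$) is harmless, and the distributional identity, the entropy inequality, and the regularity statements follow by the standard localization argument exploiting the compact support of all admissible test functions.
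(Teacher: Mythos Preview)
Your proposal is correct and follows exactly the approach the paper indicates: the paper does not give a detailed proof but merely states that, in view of the finite speed of propagation (Theorem~\ref{teo:fsp}), entropy solutions to the Cauchy problem for bounded, compactly supported data ``can be obtained in a standard way, gluing together those of the homogeneous Dirichlet or Neumann problem.'' You have carried out precisely this gluing argument, and the details you supply (consistency via uniqueness on the bounded domains, localization of items $(i)$--$(v)$, and the harmless extension of $\w$ outside $\supp u$) are all sound.
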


{
\begin{definition}
The definition of subsolution is the same as Definition \ref{def-sol_c}, except that the {equalities} in \eqref{dist-C} and in item $(v)$ have to be replaced by a less than or equal sign.
\end{definition}
}

{With this notion at hand, we can formulate } 
the following comparison principle, leading to uniqueness of solutions:

\begin{theorem}\label{T-sub-C}
{Let ${\rm \tau}>0$ and $u_0\in L_{loc}^\infty(\R^N)$ be nonnegative.} {Let $u$ and $\und u$ be an entropy solution, respectively subsolution, to \eqref{pbparab-C} in $(0,{\rm \tau})\times \R^N$}
such that supp $\underline u\cap ((0,{\rm \tau})\times\R^N)$ is compact. Then $\underline u(t)\leq u(t)$ for all $t\in (0,{\rm \tau})$.
\end{theorem}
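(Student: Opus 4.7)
The plan is to adapt the doubling-the-variables argument of Theorem \ref{T-sub} to the Cauchy setting, exploiting the compact support of $\und u$ in place of the boundary $\partial\Omega$ used in the Dirichlet case.

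First I would fix $R>0$ so that $\supp \und u\cap((0,{\rm \tau})\times\R^N)\subset[0,{\rm \tau}]\times B_R$, and pick a spatial cutoff $\sigma\in C_c^\infty(\R^N)$ with $\sigma\equiv 1$ on $\overline{B_R}$ and $\supp\sigma\subset B_{R+1}$. Keeping this $\sigma$ fixed throughout (rather than letting it invade $\R^N$, as the analogous $\sigma_k$ does in Step 6 of Theorem \ref{T-sub}), Steps 0--5 of that proof carry over verbatim, since they are entirely local and use only the entropy inequality \eqref{main-ineq-C} for $u$ and $\und u$.

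The key simplification occurs in what replaces Steps 6--7. In the analogue of \eqref{remaining}, the term $\int\kappa_n S_{\eps,u}(\und u)(T(\und u)\und\z-T(u)\z)\cdot\nabla\sigma$ vanishes identically, because $\und u\equiv 0$ on $\supp\nabla\sigma$, hence $T(\und u)\equiv 0$, $S_{\eps,u}(\und u)\equiv 0$, and $\und\z=\varphi(\und u)\und\w\equiv 0$ there. A similar remark makes the associated sign-type term vanish after dividing by $\eps$ (using also $\sign(\und u-u)_+=0$ where $u\ge\und u=0$). Dividing successively by $\eps$ and by $b-a$ and passing to the limits as in Step 7 then yields the Cauchy analogue of \eqref{UE8conclusion6}:
\[
-\int_{(0,{\rm \tau})^2\times\R^N}\bigl((u-\und u)_+(\kappa_n)_t+(\und u-u)_+(\kappa_n)_{\ut}\bigr)\sigma \ \le\ -\int_{(0,{\rm \tau})^2\times\R^N}\kappa_n\z\cdot\nabla\sigma,
\]
with the volume term $-\int\kappa_n\z\cdot\nabla\sigma$ replacing the boundary term $\int_{\partial\Omega}\kappa_n[\z,\nu^\Omega]$.

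To conclude, I would follow the final manipulation of Theorem \ref{T-sub}. Using $(\und u-u)_+-(u-\und u)_+=\und u-u$ and the display above gives
\[
-\int(\und u-u)_+\tilde\rho_n\phi'\sigma\ \le\ \int(u-\und u)(\kappa_n)_t\,\sigma-\int\kappa_n\z\cdot\nabla\sigma.
\]
Now $\int\und u(\kappa_n)_t\sigma=0$ by Fubini, since $\sigma$ is time-independent and $\int_0^{\rm \tau}(\kappa_n)_t\,\d t=0$ for $n$ large; and testing the distributional identity \eqref{dist-C} against the admissible $\kappa_n\sigma\in C_c^\infty((0,{\rm \tau})\times\R^N)$ gives $\int u(\kappa_n)_t\sigma=\int\kappa_n\z\cdot\nabla\sigma$. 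Thus the two volume contributions on the right cancel exactly, so $-\int(\und u-u)_+\tilde\rho_n\phi'\sigma\le 0$. Since $\sigma\equiv 1$ on $\supp(\und u-u)_+\subset\supp\und u$, the cutoff may be dropped; letting $n\to\infty$ with $\phi\ge 0$ arbitrary in $\mathcal D((0,{\rm \tau}))$, and using $\und u(0)\le u_0$ in $L^1$, yields $\int_{\R^N}(\und u(t)-u(t))_+\,\d x=0$ for every $t\in(0,{\rm \tau})$. The main technical point is the final cancellation, which hinges on $\und\z\equiv 0$ on $\supp\nabla\sigma$ so that the subsolution inequality $\und u_t\le\dive\und\z$ contributes nothing extra; this is exactly where the compact-support hypothesis on $\und u$ enters, replacing the boundary-trace identities used in Theorem \ref{T-sub}.
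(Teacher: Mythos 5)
Your proof is correct and follows essentially the same route as the paper's: you repeat Steps 0--5 of Theorem \ref{T-sub} verbatim, replace the boundary-term analysis of Steps 6--7 by a fixed spatial cutoff $\sigma\equiv 1$ on a neighborhood of $\supp\und u$, and then exploit that $\und u$, and hence $S_{\eps,u}(\und u)$ and $\und\z$, vanish on $\supp\nabla\sigma$ to kill the extra term, exactly as the paper does by noting $\{\und u>u\}\subset\supp\und u$. The only cosmetic difference is that you observe the vanishing before, rather than after, dividing by $\eps$, which changes nothing in substance.
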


\begin{proof}
  The proof closely follows that of Theorem \ref{T-sub}, and is in fact simpler. We assume all the notation therein. {After repeating line by line the arguments up to Step 5, we arrive at a formula identical to \eqref{remaining}:
\begin{eqnarray}
\nonumber \lefteqn{ \dys -\int_{(0,{\rm \tau})^2\times\Omega} \left({J}_{TR_{\eps,\underline{u}}}(u) (\kappa_n)_t +\nonumber
{J}_{TS_{\eps,u}}( \underline{u}) (\kappa_n)_\ut \right) \sigma}
\\ \nonumber && \quad + \int_{(0,{\rm \tau})^2\times\Omega}\kappa_n S_{\eps,u}(\uu)(T(\uu)\underline\z-T(u)\z)\cdot\nabla\sigma \\ &&+ \eps  \int_{(0,{\rm \tau})^2\times\Omega} \kappa_n T(u)\z\cdot\nabla\sigma  \label{remaining-C}\nonumber \leq C \eps o_{\eps} (1).
\end{eqnarray}
}
Dividing \eqref{remaining-C} by $\eps$ and passing to the limit as $\eps\to 0^+$ we get
\begin{eqnarray}\label{remaining-C1}
\nonumber  \lefteqn{ -\int_{(0,{\rm \tau})^2\times\R^N}
\left({J}_{T{\rm sign }{(\cdot-\underline{u})_+}}(u) (\kappa_n)_t +
{J}_{T{\rm sign }(\cdot -u)_+}( \underline{u}) (\kappa_n)_\ut \right) \sigma}
\\  &+& \int_{(0,{\rm \tau})^2\times\R^N}\kappa_n \chi_{\{\underline u> u\}}(T(\uu)\underline\z-T(u)\z)\cdot\nabla\sigma \\ && \nonumber+ \int_{(0,{\rm \tau})^2\times\R^N} \kappa_n T(u)\z\cdot\nabla\sigma\leq 0.
\end{eqnarray}
Since the support of $\uu$ is compact, we may choose $\sigma$ as a cut-off function such that $\sigma\equiv 1 $ on the support of $\underline u$. Observe that ${\{\underline u>u\}}\subset {\rm supp} (\underline u)$, then \eqref{remaining-C1} turns into
\begin{equation*}\label{remaining-C2}
-\int_{(0,{\rm \tau})^2\times\R^N}
\left({J}_{T{\rm sign }{(\cdot-\underline{u})_+}}(u) (\kappa_n)_t + {J}_{T{\rm sign }(\cdot -u)_+}( \underline{u}) (\kappa_n)_\ut \right) \sigma +  \int_{(0,{\rm \tau})^2\times\R^N} \kappa_n T(u)\z\cdot\nabla\sigma\leq 0.
\end{equation*}
From here, the proof continues as that of Theorem \ref{T-sub}.
\end{proof}

\subsection{Characterization of solutions: the Rankine-Hugoniot condition}

Assume that $u\in BV_{loc}((0, \tau )\times \R^N)$. Let us denote by $ {J}_u$ the jump set of $u$ as a function of $(t, x)$.
Let $\nu := \nu_u = (\nu_t , \nu_x)$ be the unit
normal to the jump set of $u$ so that $D^j_{t,x}u=[u]\nu\mathcal H^{N}\res_{J_u}$.

\begin{lemma}\label{lemma_caselles}
  \cite[Lemma 6.6, Proposition 6.8]{Caselles_jde11} Let $u\in BV_{loc}((0,\tau)\times\R^N)$, let $\z\in L^\infty([0,\tau]\times\R^N;\R^N)$ be such that $u_t=\dive\z$ 
  ,
  and let the speed of the discontinuity set be defined by
  $$
  v(t,x):=\frac{\nu_t(t,x)}{|\nu_x(t,x)|} \quad \mbox{$\mathcal H^{N}$-a.e. on $J_u$.}
  $$
  Then
  \begin{equation}\label{c11}
  \nu_t \mathcal H^{N}\res_{J_u}={v}\mathcal H^{N-1}\res_{J_{u(t)}} dt
  \end{equation}
   and
  $$
  [u(t)]v(t)=[\z,\nu^{J_{u(t)}}]^+-[\z,\nu^{J_{u(t)}}]^-\quad \mbox{$\mathcal H^{N-1}$-a.e. on $J_{u(t)}$}.
  $$
\end{lemma}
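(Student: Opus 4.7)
Both assertions are statements about the jump part of the equation $u_t=\dive \z$ on the space--time jump set, combined with a careful analysis of how the $N$--rectifiable set $J_u\subset (0,\tau)\times\R^N$ decomposes under slicing by the time coordinate. I would handle the two claims in this order: first the slicing identity \eqref{c11}, which is purely geometric and depends only on $u\in BV_{loc}$; then the Rankine--Hugoniot identity, which additionally uses the PDE and the Anzellotti/Chen--Frid normal trace theory on each time slice.

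For \eqref{c11}, the natural tool is the rectifiable coarea formula applied to the projection $\pi(t,x):=t$ restricted to $J_u$. Since $J_u$ is countably $\mathcal H^N$--rectifiable in $\R^{N+1}$ with unit normal $\nu=(\nu_t,\nu_x)$, the tangential gradient of the function $(t,x)\mapsto t$ along $J_u$ has modulus $|\nu_x|$ (namely the projection of $e_t$ onto the tangent plane $\nu^\perp$). For every Borel $f\ge 0$ on $J_u$ the coarea formula reads
\[
\int_{J_u} f(t,x)\,|\nu_x(t,x)|\d\mathcal H^N(t,x)=\int_0^\tau\!\!\left(\int_{J_u\cap\{\pi=s\}} f(s,x)\d\mathcal H^{N-1}(x)\right)\d s .
\]
Standard BV slicing (see e.g.\ \cite[Thm.\ 3.108]{AFPBook}) identifies $J_u\cap(\{s\}\times\R^N)$ with $J_{u(s)}$ up to $\mathcal H^{N-1}$--null sets for $\mathcal L^1$--a.e.\ $s$, and the normal to $J_{u(s)}$ in $\R^N$ is $\nu^{J_{u(s)}}=\nu_x/|\nu_x|$. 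Choosing $f=\nu_t/|\nu_x|^2$ and noting that $v=\nu_t/|\nu_x|$, one recovers exactly \eqref{c11}.

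For the Rankine--Hugoniot identity, I would equate the singular parts of the two sides of $u_t=\dive\z$ viewed as measures on $(0,\tau)\times\R^N$. The jump part of the $t$--component of $Du$ is $(D^j u)_t=[u]\,\nu_t\,\mathcal H^N\res J_u$, and by \eqref{c11} this equals $[u(t)]\,v(t)\,\mathcal H^{N-1}\res J_{u(t)}\,dt$. On the right--hand side, for a.e.\ $t$ one has $\z(t,\cdot)\in X_{\mathcal M}(\R^N)$ (since $\dive\z(t)\in\mathcal M$ is absolutely continuous with respect to $\mathcal H^{N-1}$), so the jump part of the measure $\dive\z(t,\cdot)$ along the rectifiable set $J_{u(t)}$ is precisely $\big([\z(t),\nu^{J_{u(t)}}]^+-[\z(t),\nu^{J_{u(t)}}]^-\big)\,\mathcal H^{N-1}\res J_{u(t)}$, by the Chen--Frid/Anzellotti representation of the normal trace from either side. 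Equating the two jump measures and using a density argument in $t$ to pass from the product $dt$--measure to pointwise a.e.\ $t$ yields the claim.

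The main obstacle is not conceptual but notational/orientational. One must verify that $J_u\cap(\{t\}\times\R^N)=J_{u(t)}$ $\mathcal H^{N-1}$-a.e.\ for a.e.\ $t$ (which needs $|\nu_x|>0$ $\mathcal H^N$-a.e.\ on $J_u$; otherwise the speed $v$ would not even be defined), and that the labels $\pm$ on the two normal traces of $\z(t,\cdot)$ are consistent with the labels $u^\pm$ used in $[u]=u^+-u^-$. The first point follows from the fact that if $\nu_x$ vanished on a set of positive $\mathcal H^N$--measure then $u_t$ would not be a locally finite measure on $(0,\tau)\times\R^N$, contradicting our standing assumption. The second is a matter of fixing the orientation of $\nu^{J_{u(t)}}=\nu_x/|\nu_x|$ coherently with $\nu$; once this is done, \eqref{c11} plus the PDE gives the Rankine--Hugoniot identity $\mathcal H^{N-1}$-a.e.\ on $J_{u(t)}$.
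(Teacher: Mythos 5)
This lemma is not proved in the paper at all: it is quoted verbatim from \cite[Lemma 6.6, Proposition 6.8]{Caselles_jde11}, so there is no internal proof to compare against. That said, your sketch does follow the same strategy Caselles uses (rectifiable coarea formula for the slicing identity, then Anzellotti/Chen--Frid normal traces for the Rankine--Hugoniot relation), so the overall route is the right one. Two points need fixing, though.

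First, the coarea computation has an algebra error. With the coarea formula as you wrote it,
\[
\int_{J_u} f\,|\nu_x|\,\d\mathcal H^N=\int_0^\tau\Bigl(\int_{J_u\cap\{\pi=s\}} f\,\d\mathcal H^{N-1}\Bigr)\d s,
\]
the correct choice to reproduce \eqref{c11} is $f=\phi\,\nu_t/|\nu_x|$ (for a test function $\phi$), not $f=\nu_t/|\nu_x|^2$. With your $f$ the left side becomes $\int_{J_u}v\,\d\mathcal H^N$, not $\int_{J_u}\nu_t\,\d\mathcal H^N$, and the identity you land on is $\int_{J_u}v\,\d\mathcal H^N=\int_0^\tau\int_{J_{u(s)}}v/|\nu_x|\,\d\mathcal H^{N-1}\d s$, which is true but not \eqref{c11}.

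Second, your justification that $|\nu_x|>0$ $\mathcal H^N$-a.e.\ on $J_u$ is not correct as stated. If $\nu_x$ vanished on a set $S\subset J_u$ with $\mathcal H^N(S)>0$, the contribution to $D_t u$ from $S$ would be $[u]\,\mathcal H^N\res S$, which is a perfectly well-defined locally finite measure; local finiteness of $u_t$ is never threatened. The actual obstruction comes from the PDE: since $u_t=\dive\z$ with $\z\in L^\infty$ and $\dive\z(t,\cdot)\in\mathcal M$ for a.e.\ $t$, the space--time measure $u_t$ disintegrates as $\dive\z(t,\cdot)\,\d t$ and in particular is absolutely continuous with respect to $\d t$; hence it cannot charge any ``horizontal'' portion of $J_u$ (where $\nu=(\pm1,0)$). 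This disintegration is also the missing ingredient in your last step: passing from the equality of the two space--time jump measures to the pointwise-in-$t$ equality of normal traces requires exactly this Fubini-type identification of the jump part of $\dive\z$ on $J_u$ with $\bigl([\z(t),\nu^{J_{u(t)}}]^+-[\z(t),\nu^{J_{u(t)}}]^-\bigr)\,\mathcal H^{N-1}\res J_{u(t)}\,\d t$, which is itself the substance of Caselles' Proposition 6.8 rather than a consequence of generic ``density in $t$.''
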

We have the following characterization of entropy solutions to Problem \ref{pbparab-C}:
\begin{theorem}\label{thm:charac}
  Let $u\in C([0,{\rm \tau}); L^1_{loc}(\R^N))\cap L^\infty_{loc}([0,{\rm \tau}]\times\R^N)$ satisfy $(i)$-$(iii)$ in Definition \ref{def-sol_c}. Then, the entropy condition \eqref{main-ineq-C} is satisfied iff
  \begin{equation}\label{conddiff}
  \z\cdot\nabla u=|\nabla \Phi (u)|\quad\mbox{and}\quad |D^c\Phi_\ell(u)|\leq -(J_\ell(u))_t^c+(\dive \ell(u) \z)^c {\quad\mbox{$\forall\ell\in\mathcal L$}}
  \end{equation}
   and
   \begin{equation}
    \label{conditionatjump} [\z,\nu^{J_{u(t)}}]^\pm=(u^m)^\pm {\rm \ sign \ }(u^+-u^-)\quad \mbox{$\mathcal H^{N-1}$-a.e. on $J_{u(t)}$ {for a.e. $t\in (0,\tau)$}}.
  \end{equation}
Moreover, a.e. $t\in [0,\tau]$, it holds
$$
v(t,x)=\frac{(u^+)^m-(u^-)^m}{u^+-u^-}\quad \mbox{$\mathcal H^{N-1}$-a.e. on $J_{u(t)}$}.
$$
\end{theorem}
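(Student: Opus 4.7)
The plan is to rewrite the entropy inequality \eqref{main-ineq-C} as a single spacetime measure inequality and then decompose both sides into absolutely continuous, Cantor, and jump parts; the three statements in \eqref{conddiff}--\eqref{conditionatjump} will correspond precisely to the three pieces of this decomposition.

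First, I would integrate by parts in time in the first term on the right-hand side of \eqref{main-ineq-C}, use $u_t=\dive\z$ (which holds as a measure by $(ii)$ and $(iii)$), and apply the Anzellotti chain rule (Lemma~\ref{lemmacaselles}) to rewrite the last term as $\int \psi\,d(\z,D_x\ell(u))+\int \psi\,\ell(u)\,u_t$. This converts the entropy inequality into the equivalent spacetime measure inequality
$$h(u, D_x\ell(u)) \leq -(J_\ell(u))_t + \ell(u) u_t + (\z, D_x\ell(u))$$
for every $\ell\in\mathcal L$. On the absolutely continuous part, $(J_\ell(u))_t^{ac}=\ell(u)(u_t)^{ac}$ cancels with $\ell(u)(\dive\z)^{ac}$, and what remains is $\varphi(u)\ell'(u)|\nabla u|\leq \ell'(u)\,\z\cdot\nabla u$ a.e. Localizing $\ell$ about an arbitrary level of $u$ and combining with the universal bound $\z\cdot\nabla u\leq|\z||\nabla u|\leq\varphi(u)|\nabla u|$ (from $\|\w\|_\infty\leq 1$), one obtains the desired equality $\z\cdot\nabla u=|\nabla\Phi(u)|$. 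The Cantor part of the master inequality is precisely the second statement in \eqref{conddiff}.

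The crux is the jump part. Using \eqref{lsc}, $h^j(u,D_x\ell(u))=|\Phi_\ell(u^+)-\Phi_\ell(u^-)|\,|\nu_x|\,\mathcal H^N\res J_u$. Combining Lemma~\ref{lemma_caselles}, Vol'pert's chain rule, and the explicit jump formula for Anzellotti's pairing $(\z,D_x\ell(u))^j$, the jump part of the right-hand side rewrites in terms of the one-sided traces $[\z,\nu^{J_{u(t)}}]^\pm$, the interface speed $v$, and $[u]$. Choosing $\ell=T_{a,b}^0$ with $u^\wedge\leq a<b\leq u^\vee$ and letting $b-a\to 0$ over a dense set of levels, one extracts the pointwise Rankine--Hugoniot identity \eqref{conditionatjump}. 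The converse direction is obtained by reversing these computations part by part, and the speed formula follows directly by substituting \eqref{conditionatjump} into Lemma~\ref{lemma_caselles}: $[u]\,v=((u^+)^m-(u^-)^m)\,\sign(u^+-u^-)$, whence $v=((u^+)^m-(u^-)^m)/(u^+-u^-)$ after cancellation of the sign factors.

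The main obstacle is the jump-part computation. The difficulty is that $\dive_x(\ell(u)\z)$ has to be expanded via the Anzellotti chain rule, whose jump part involves the one-sided traces of $\z$ along $J_{u(t)}$ through the pairing $(\z,D_x\ell(u))^j$, while simultaneously $\ell(u)u_t=\ell(u)\dive_x\z$ has a jump located where $\ell(u)$ itself does, forcing the use of Vol'pert's chain rule with the correct averaged representative. Only after disentangling these contributions, and using a one-parameter family of truncations $T_{a,b}^0$ to scan $[u^\wedge,u^\vee]$, can one upgrade the measure inequality on $J_u$ into the pointwise identity \eqref{conditionatjump}, rather than just a one-sided inequality at each level.
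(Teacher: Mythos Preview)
Your overall strategy---decompose the entropy inequality into absolutely continuous, Cantor, and jump parts and treat each separately---is exactly what the paper does, and your treatment of the absolutely continuous and Cantor parts matches the paper's. The difference lies in the jump part and in the converse direction.

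For the jump part, the paper does not scan levels with truncations. Instead, it writes the jump inequality in the form
\[
[\Phi_\ell(u)]+[J_\ell(u)]\,v\le \ell(u^+)[\z,\nu^{J_{u(t)}}]^+-\ell(u^-)[\z,\nu^{J_{u(t)}}]^-
\]
and then plugs in a \emph{single} family $\ell_\eps$ whose derivative concentrates at \emph{both} endpoints $u^-$ and $u^+$ (a piecewise-linear function rising from $0$ to $1$ near $u^-$ and from $1$ to $2$ near $u^+$). Passing $\eps\to 0$ gives $(u^+)^m+(u^-)^m+v[u]\le 2[\z,\nu]^+$; combining with the Rankine--Hugoniot identity from Lemma~\ref{lemma_caselles} and the trace bound $[\z,\nu]^\pm\le (u^\pm)^m$ yields \eqref{conditionatjump} at once. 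Your scanning approach can be made to work (taking $a\uparrow u^+$ in $T_{a,b}^a$ gives $[\z,\nu]^+\ge (u^+)^m$, and then one still needs the trace bound and Rankine--Hugoniot for the other side), but it is less direct, and you should note that $T_{a,b}^0\notin\mathcal L$ since $T_{a,b}^0(0)=a\ne 0$; you would need $T_{a,b}^a$.

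For the converse direction, ``reversing these computations'' is not quite what happens: the paper substitutes \eqref{conditionatjump} into the jump inequality and reduces it to
\[
\int_{u^-}^{u^+}\ell(\sigma)\Big(\tfrac{(u^+)^m-(u^-)^m}{u^+-u^-}-m\sigma^{m-1}\Big)\,d\sigma\le 0,
\]
which holds because $\sigma\mapsto\sigma^m$ is convex and $\ell$ is nondecreasing. This convexity argument is the missing ingredient in your sketch.
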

\begin{proof}
  Observe first that the entropy condition decouples into three inequalities between measures for the Lebesgue, Cantor, and jump parts{, respectively:}
\begin{eqnarray}
\label{rh-ac}
|\nabla \Phi_\ell(u)| & \le & -(J_\ell(u))_t + (\dive(\ell(u)\z))^{ac},
\\ \label{rh-c}
|D^c \Phi_\ell(u)| & \le & -(J_\ell(u))_t^c + (\dive(\ell(u)\z))^{c},
\\ \label{rh-j}
|D^j \Phi_\ell(u)| & \le & -D^j_t(J_\ell(u)) + (\dive(\ell(u)\z))^{j}
\end{eqnarray}
for all $\ell \in \mathcal L$ {and a.e. $t\in(0,\tau)$}. {Arguing} as in the proof of Theorem \ref{th:ss}, {\eqref{rh-ac} is easily seen to be} equivalent to the following {in}equality for any $\ell\in\mathcal L$:
$$
\ell'(u)\varphi(u)|\nabla u|=|\nabla \Phi_\ell(u)|\leq \ell'(u)\z\cdot\nabla u{.} 
$$
{Since by $(iii)$ $\z\cdot\nabla u\le \varphi(u)|\nabla u|$, \eqref{rh-ac} holds if and only if} $\z\cdot\nabla u=|D\Phi(u)|${, i.e. \eqref{conddiff}$_1$,} holds.
{Since \eqref{conddiff}$_2$ coincides with \eqref{rh-c}, it remains to prove that \eqref{rh-j} is equivalent to \eqref{conditionatjump}. In view of \eqref{c11}, \eqref{rh-j}}
%
is equivalent to
\begin{equation}\label{conditionjumpentropy}
[\Phi_\ell(u)]+[J_\ell(u)]{v}\leq [\ell(u)\z,\nu^{J_{u(t)}}]^+-[\ell(u)\z,\nu^{J_{u(t)}}]^{-} \quad \mbox{$\mathcal H^{N-1}$-a.e. on  $J_{u(t)}$.}
\end{equation}

Assume that \eqref{conditionjumpentropy} holds for any $\ell\in\mathcal L$ {and a.e. $t\in(0,\tau)$.} If $u^+>u^-$ (the other case is analogous), we let
$$
\ell_\eps(s):=\frac{1}{\eps}(s-u^{-})\chi_{[u^-,u^-+\eps]}+\chi_{]u^-+\eps, u^+-\eps]}+\left(2+\frac{1}{\eps}(s-u^+)\right)\chi_{[u^+-\eps,u^+]}+2\chi_{[u^+,\infty[}.
$$
Then, taking $\eps\to 0^+$ in \eqref{conditionjumpentropy}, we obtain that
{
$$
[\Phi_{\ell_\eps}(u)]= \int_{u^-}^{u^+} \ell_\eps'(\sigma)\sigma^m \d \sigma \stackrel{\eps\to 0}\to  2(u^+)^m-m\int_{u^-}^{u^+}\sigma^{m-1}\d \sigma = (u^+)^m + (u^-)^m,
$$
whence
}
$$
(u^+)^m + (u^-)^m+v[u]\leq 2[\z,\nu^{J_{u(t)}}]^+
$$
which, in view of Lemma \ref{lemma_caselles}, yields
$$
(u^+)^m+(u^-)^m\leq [\z,\nu^{J_{u(t)}}]^++[\z,\nu^{J_{u(t)}}]^-\leq (u^+)^m+(u^-)^m.
$$ Therefore \eqref{conditionatjump} holds.

Suppose now that \eqref{conditionatjump} holds, and suppose that we are again in a jump point where $u^-(t,x)<u^+(t,x)$. Then, \eqref{conditionjumpentropy} reads as:
$$\int_{u^-}^{u^+}\ell'(\sigma)\sigma^m \,d\sigma+ v\int_{u^-}^{u^+} \ell(\sigma)\, d\sigma\leq \ell(u^+)(u^+)^m-\ell(u^-)(u^-)^m.$$Integrating by parts the first term and using Lemma \ref{lemma_caselles}, then we will have to show that
$$ \int_{u^-}^{u^+}\ell(\sigma)\left(\frac{(u^+)^m-(u^-)^m}{u^+-u^-}-m\sigma^{m-1}\right)\,d\sigma\leq 0,$$but this inequality is {trivially satisfied} by the convexity of $\sigma\mapsto \sigma^m$.
\end{proof}

\section{Waiting-time solutions}

\subsection{Explicit solutions.}
We construct a family of solutions which exhibit a waiting-time phenomenon. {As a byproduct we infer that the operator $\mathcal A_g$ is not completely accretive; this  in contrast to the case $m = 1$, see  \cite{ACMM-JEE}}.

\begin{proposition}\label{prop:wts}
Let $x_0\in \R^N$, ${D}_0>0$, $C_0>0$, $R>0$, and $\rho_0\in (0,R)$. Consider $B_\rho:=B(x_0,\rho)$ and let
$$
p:= \frac{N(m-1)+1}{m}> 1,\quad { \rm \tau^\ast}:=\frac{\rho_0 D_0^{1-m}} {{mp}}\left(\left(\frac{R}{\rho_0}\right)^p-1\right).
$$
There exist:
\begin{itemize}
\item[-] an increasing function $\rho\in C([0,{\rm \tau^*}])$ such that $\rho(0)=\rho_0$ and $\rho({\rm \tau^*})=R$,
\item[-] a decreasing function $D\in C([0,{\rm \tau^*}))$ such that $D(0)=D_0$, $D({\rm \tau^*})>0$,
\end{itemize}
such that the function
$$
u(t,x)= \left\{\begin{array}{ll}
D(t) \chi_{B_{\rho(t)}} + C(t)\psi(r)\chi_{B_R\setminus B_{\rho(t)}} \quad & t<{\rm \tau^*}
 \\[1ex] \dys D({\rm \tau^*}) \left(\frac{t}{{\rm \tau^*}}\right)^{-\frac{N}{mp}} \chi_{B_{R(t/{\rm \tau^*})^{1/{mp}}}} & t>{\rm \tau^*}
\end{array}\right., \quad \quad r={\|x-x_0\|}
$$
is a solution to the Cauchy problem \eqref{pbparab-C}, where
\begin{eqnarray*}
(C(t))^{1-m} = C_0^{1-m}\left(1-\frac{t}{{\rm \tau^*}}\right),
\quad \psi(r)=  \frac{D_0}{C_0} \left(\frac{r}{\rho_0} \frac{\left(\left(\frac{R}{r}\right)^p-1\right)}{ \left(\left(\frac{R}{\rho_0}\right)^p-1\right)}\right)^{1/(m-1)}.
\end{eqnarray*}
\end{proposition}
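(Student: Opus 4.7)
The plan is to construct $u$ directly in the two regimes $|x|<\rho(t)$ and $\rho(t)<|x|<R$ for $t<\tau^*$, glue it to the source-type self-similar solution of Theorem \ref{th:ss} at $t=\tau^*$, and verify it as an entropy solution via the characterization of Theorem \ref{thm:charac}. In the outer region I adopt the separation-of-variables ansatz $u(t,x)=C(t)\psi(r)$ with the radial field $\w=-e_r$, so that $\z=-C^m\psi^m e_r$ and $\dive\,\z=-C^m\bigl((\psi^m)'+(N-1)\psi^m/r\bigr)$. The form given in the statement rewrites as $\psi^{m-1}=E\bigl(R^p r^{1-p}-r\bigr)$ with $E=(D_0/C_0)^{m-1}/(\rho_0 B)$; a direct computation shows that the coefficient of $r^{-p}$ in the bracket vanishes iff $mp=N(m-1)+1$, and the remaining terms yield $(\psi^m)'+(N-1)\psi^m/r=-\tilde K\psi$ with $\tilde K=Emp/(m-1)$. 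Hence the PDE $u_t=\dive\,\z$ reduces to $C'=\tilde K C^m$, which with $C(0)=C_0$ yields the announced $C(t)^{1-m}=C_0^{1-m}(1-t/\tau^*)$ and determines $\tau^*=C_0^{1-m}/(Emp)=\rho_0 B D_0^{1-m}/(mp)$.

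In the inner region I take $u\equiv D(t)$ on $B_{\rho(t)}$ together with the linear vector field $\w(x)=-x/\rho(t)$: this satisfies $|\w|\leq 1$ and $\w\cdot e_r=-1$ at $r=\rho(t)$, so the normal trace of $\z$ matches the outer one automatically. Since $\dive\,\w=-N/\rho(t)$, the PDE reduces to the ODE $D'=-ND^m/\rho$. Continuity of $u$ across $r=\rho(t)$ forces $D(t)=C(t)\psi(\rho(t))$ which, together with the explicit formulas above, gives the algebraic identity
$$
D(t)^{1-m}\bigl(R^p\rho(t)^{1-p}-\rho(t)\bigr)=mp(\tau^*-t).
$$
Differentiating the matching relation and combining with the ODEs for $D$ and $C$ gives a first-order ODE for $\rho$ which is regular on $(\rho_0,R)$; standard ODE theory then yields a unique $C^1$ solution on a maximal interval, strictly increasing because $\psi'(\rho)<0$ and the right-hand side has the appropriate sign, while the identity above forces $\rho(t)\to R$ precisely as $t\to \tau^*$. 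Integrating $(D^{1-m})'=(m-1)N/\rho$ on $[0,\tau^*]$ with $\rho\in[\rho_0,R]$ gives $D(\tau^*)^{1-m}<\infty$, hence $D(\tau^*)>0$, while $D'<0$ is immediate.

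The rate $R-\rho(t)\sim mD(\tau^*)^{m-1}(\tau^*-t)$ (from the identity by Taylor expansion near $\rho=R$) combined with $C(t)\sim(\tau^*-t)^{-1/(m-1)}$ and $\psi(r)\sim c(R-r)^{1/(m-1)}$ near $r=R$ shows that the $L^1$-mass of the skirt $C(t)\psi(r)\chi_{B_R\setminus B_{\rho(t)}}$ vanishes as $t\nearrow\tau^*$, so $u(t)\to D(\tau^*)\chi_{B_R}$ in $L^1_{\mathrm{loc}}$. For $t>\tau^*$ the stated formula is a time-shifted instance of the source-type self-similar solution of Theorem \ref{th:ss} emanating from $D(\tau^*)\chi_{B_R}$, and is therefore an entropy solution by that theorem. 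The entropy condition of Theorem \ref{thm:charac} on $[0,\tau^*)$ reduces to three items: $\z\cdot\nabla u=|\nabla\Phi(u)|$ (immediate since $\w=-e_r$ is aligned with $-\nabla u/|\nabla u|$ in the outer region and $\nabla u\equiv 0$ in the inner one); the absence of a Cantor part (piecewise Lipschitz regularity); and the jump identity \eqref{conditionatjump}, which is vacuous as $u$ is continuous at $r=\rho(t)$ by construction and vanishes at $r=R$.

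The main obstacle is the quantitative control of the ODE for $\rho$ near the degenerate endpoint $\rho=R$, where $\psi\to 0$ and $\psi'$ either blows up or vanishes depending on whether $m<2$ or $m>2$, making the ODE singular at the right endpoint; the implicit identity above is used precisely to bypass this singularity and to pin down simultaneously $\rho(\tau^*)=R$ and $D(\tau^*)>0$.
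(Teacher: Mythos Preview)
Your proposal is correct and follows essentially the same route as the paper: the same inner/outer ansatz with the same vector field $\w$, the same ODEs for $C$, $D$, $\psi$, and $\rho$, and the same gluing to the self-similar solution of Theorem~\ref{th:ss} at $t=\tau^*$. The only notable variations are that you package the matching condition as the algebraic identity $D^{1-m}\bigl(R^p\rho^{1-p}-\rho\bigr)=mp(\tau^*-t)$ (equivalent to the paper's separable ODE for $\rho$) and that you obtain $D(\tau^*)>0$ from the elementary bound $\int_0^{\tau^*}\rho^{-1}\,dt\le \tau^*/\rho_0$, whereas the paper invokes mass conservation (Proposition~\ref{prop-mass-cons}) for this step.
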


\begin{proof}
Without loss of generality, we set $x_0=0$. Let us first consider $t<{\rm \tau^*}$. We require initial conditions,
$$
D(0)=D_0, \ C(0)=C_0, \ \rho(0)=\rho_0\in (0,R),
$$
and $u$ to be continuous in $\R^N$,
\begin{equation}\label{cont}
D(t)=C(t)\psi(\rho(t)), \quad \psi(R)=0.
\end{equation}
Since $u$ is supported in $B_R$, it suffices to perform the analysis there. Define
$$
\w= \left\{\begin{array}{ll} -\dfrac{x}{\rho(t)} & x\in B_{\rho(t)} \\[2ex] -\dfrac{x}{r} & x\in B_R\setminus B_{\rho(t)},\end{array}\right. \ \mbox{hence}\quad
\z=u^m \w = \left\{\begin{array}{ll} -\dfrac{(D(t))^{m}}{\rho(t)}x & x\in B_{\rho(t)} \\[2ex] -\dfrac{(C(t)\psi(r))^m}{r} x & x\in B_R\setminus B_{\rho(t)}.
\end{array}\right.
$$
Then
$$
\dive \z = \left\{\begin{array}{ll} -\dfrac{N(D(t))^{m}}{\rho(t)} & x\in B_{\rho(t)} \\[2ex] -m(\psi(r))^{m-1}(C(t))^m\psi'(r) -(N-1)\dfrac{(C(t)\psi(r))^m}{r} & x\in B_R\setminus B_{\rho(t)}
\end{array}\right.
$$
On the other hand, in view of \eqref{cont},
$$
u_t= \left\{\begin{array}{ll} D'(t) & x\in B_{\rho(t)} \\[2ex] C'(t)\psi(r) & x\in B_R\setminus B_{\rho(t)},\end{array}\right.
$$
Therefore we obtain the conditions
$$
D'(t)= -\dfrac{N(D(t))^{m}}{\rho(t)}
$$
{and, by separation of variables,}
\begin{equation}\label{AP}
(C(t))^{-m}C'(t) = - m(\psi(r))^{m-2}\psi'(r) -(N-1)\dfrac{(\psi(r))^{m-1}}{r}=K\,,
\end{equation}
where $K$ is a constant to be determined later.
An integration using $\psi(R)=0$ and initial conditions yields
\begin{eqnarray}\label{AA}
(D(t))^{1-m} &=& D_0^{1-m} +N(m-1)\int_0^t\frac{1}{\rho(t')}\d t',
\\ \label{AC}
(C(t))^{1-m} &=& C_0^{1-m} -(m-1)Kt, \quad t<\frac{C_0^{1-m}}{K(m-1)},
\\ \label{AP2}
(\psi(r))^{m-1}&=&\frac{K(m-1)}{mp} r \left(\left(\frac{R}{r}\right)^p-1\right),  \quad K\in \R.
\end{eqnarray}
Condition \eqref{cont} at $t=0$ determines
$$
K:=\left(\frac{D_0}{C_0}\right)^{m-1}\frac{mp}{(m-1)\rho_0}\left(\left(\frac{R}{\rho_0}\right)^p-1\right)^{-1}.
$$
In order to determine $\rho$, we rewrite \eqref{cont} as
$
D^{1-m}=C^{1-m}(\psi(\rho))^{1-m},
$
differentiate it in time,
$$
(D^{1-m})'=(C^{1-m})'(\psi(\rho))^{1-m}+C^{1-m}(\psi^{1-m})'\rho',
$$
note that
$$
\label{APN}
(\psi^{1-m})' = \frac{m-1}{m} \psi^{1-m} \frac{-m\psi'}{\psi} \stackrel{\eqref{AP}}=  \frac{m-1}{m} \psi^{1-m} \left(\frac{N-1}{r} + K \psi^{1-m}\right),
$$
and substitute using \eqref{AA}, \eqref{AC}, and \eqref{AP}:
$$
\frac{N}{\rho} =- K (\psi(\rho))^{1-m} +C^{1-m} \frac{1}{m} (\psi(\rho))^{1-m} \left(\frac{N-1}{\rho} + K (\psi(\rho))^{1-m}\right)\rho',
$$
i.e.
$$
\frac{m}{C_0^{1-m} -(m-1)Kt} = (\psi(\rho))^{1-m} \frac{\frac{N-1}{\rho} + K (\psi(\rho))^{1-m}}{\frac{N}{\rho} + K (\psi(\rho))^{1-m}} \rho',
$$
a separable ODE which has a unique, strictly increasing solution starting from $\rho(0)=\rho_0$, defined for $t<{\rm \tau^*}$ and such that $\rho(t)\to R$ as $t\to {\rm \tau^*}$. As $t\to {\rm \tau^*}$, we have
$$
u({\rm \tau^*},x)\to D({\rm \tau^*}) \chi_{B_{R}}.
$$
Finally, we note that, by Proposition \ref{prop-mass-cons}, $D(\tau^*)>0$.

For $t\ge \rm \tau^*$, we observe that $u$ coincides with one of the self-similar solutions $u_{\rm s}$ for suitable values of the scaling parameters in Theorem \ref{th:ss}. This completes the proof.
\end{proof}

\begin{figure}[H]
\begin{minipage}[c]{0.45\textwidth}
\includegraphics[keepaspectratio,height=6.5cm, width=6.5cm]{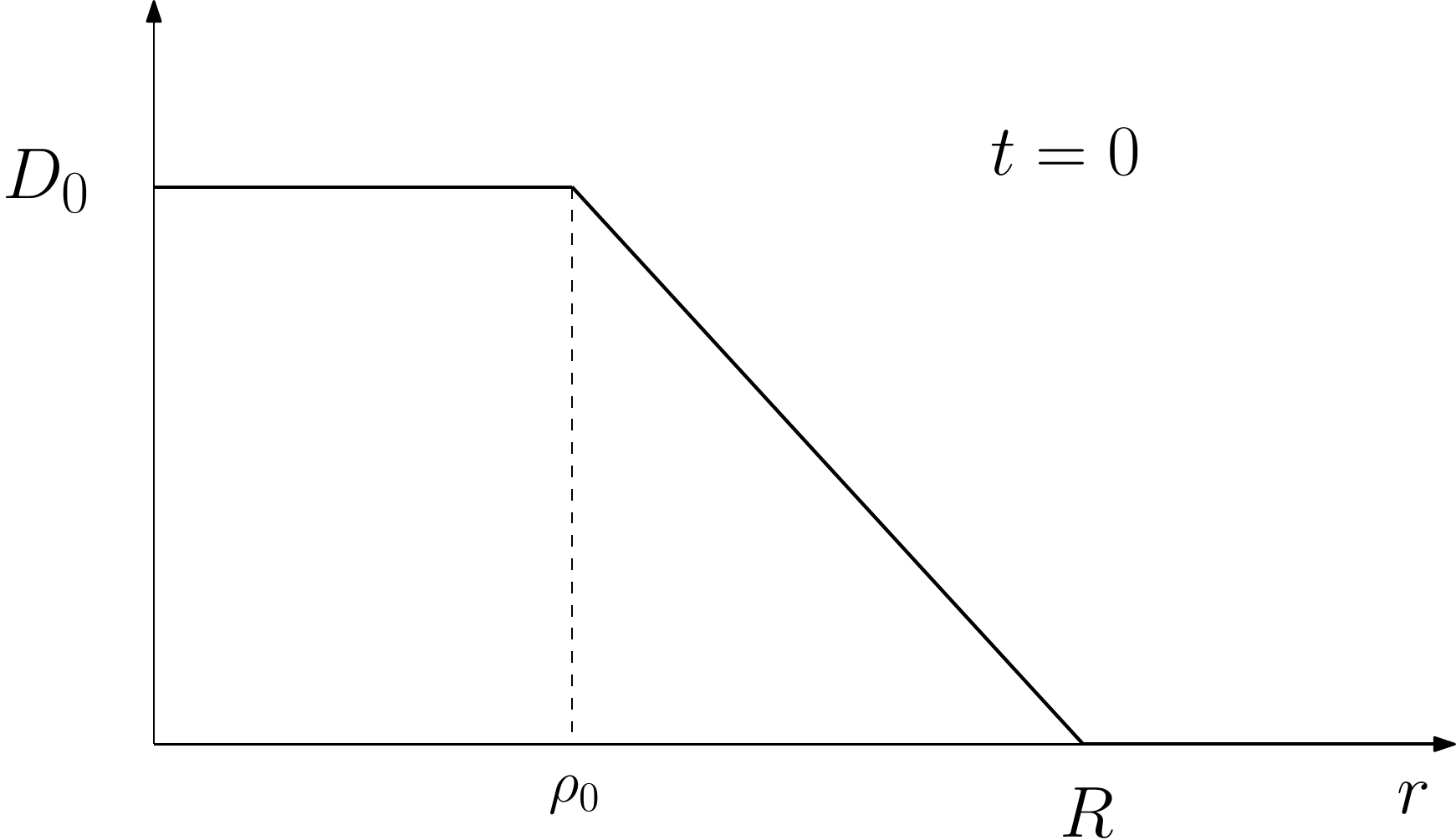}\end{minipage}
\begin{minipage}[c]{0.45\textwidth}\includegraphics[keepaspectratio,height=6.5cm, width=6.5cm]{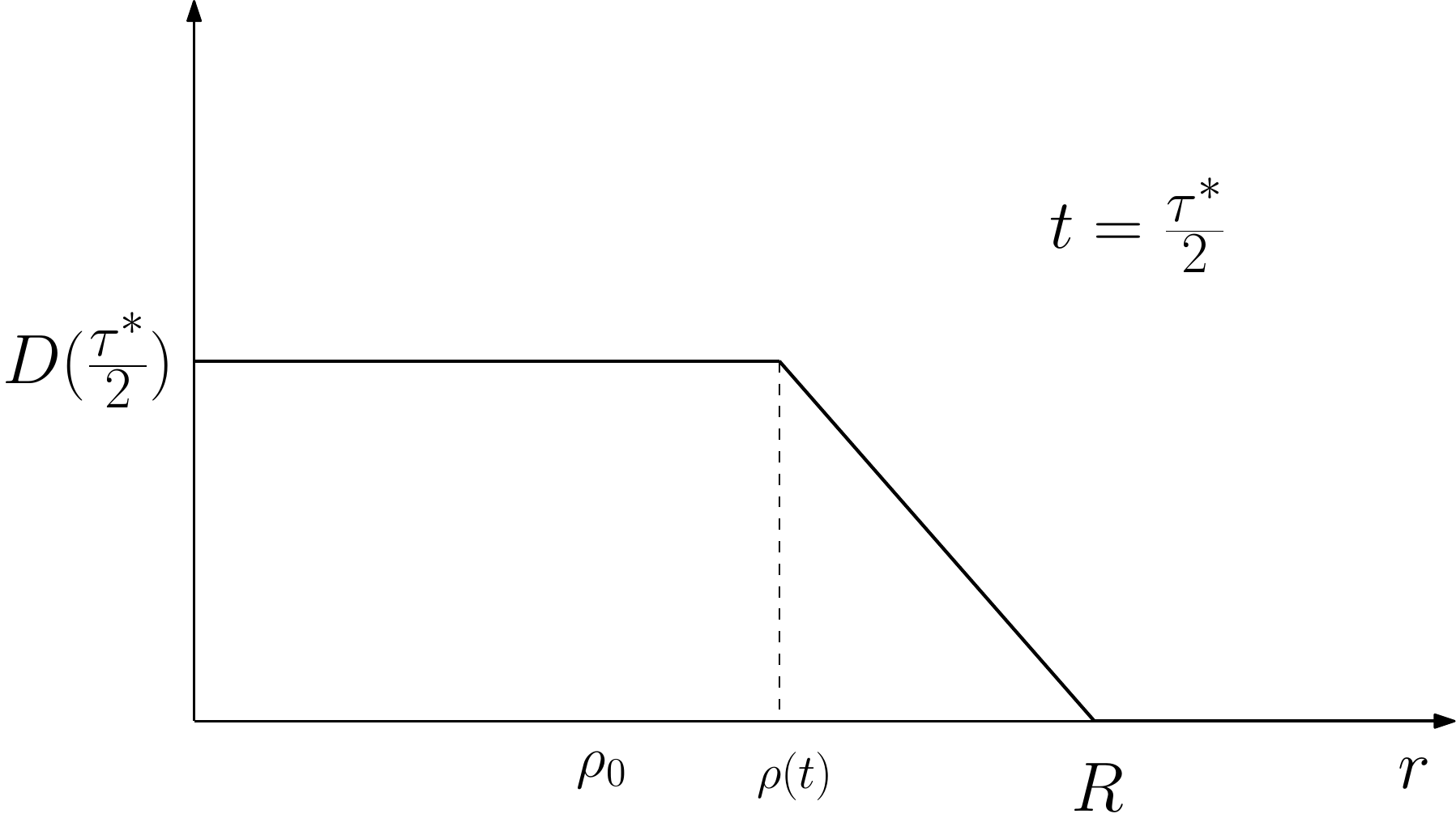} \end{minipage}\\
\begin{minipage}[c]{0.45\textwidth}\includegraphics[keepaspectratio,height=6.5cm, width=6.5cm]{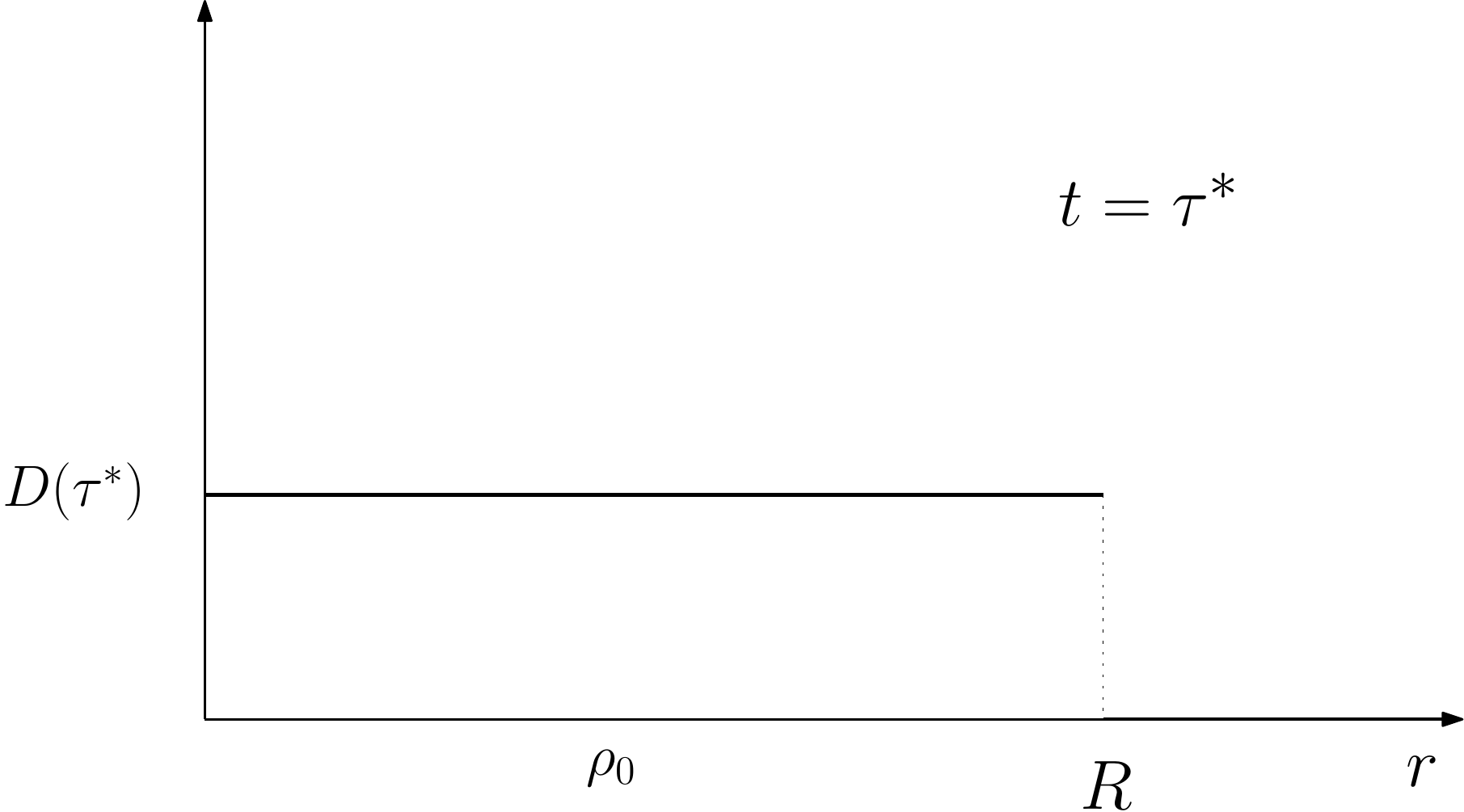} \end{minipage}
\begin{minipage}[c]{0.45\textwidth}\includegraphics[keepaspectratio,height=6.5cm, width=6.5cm]{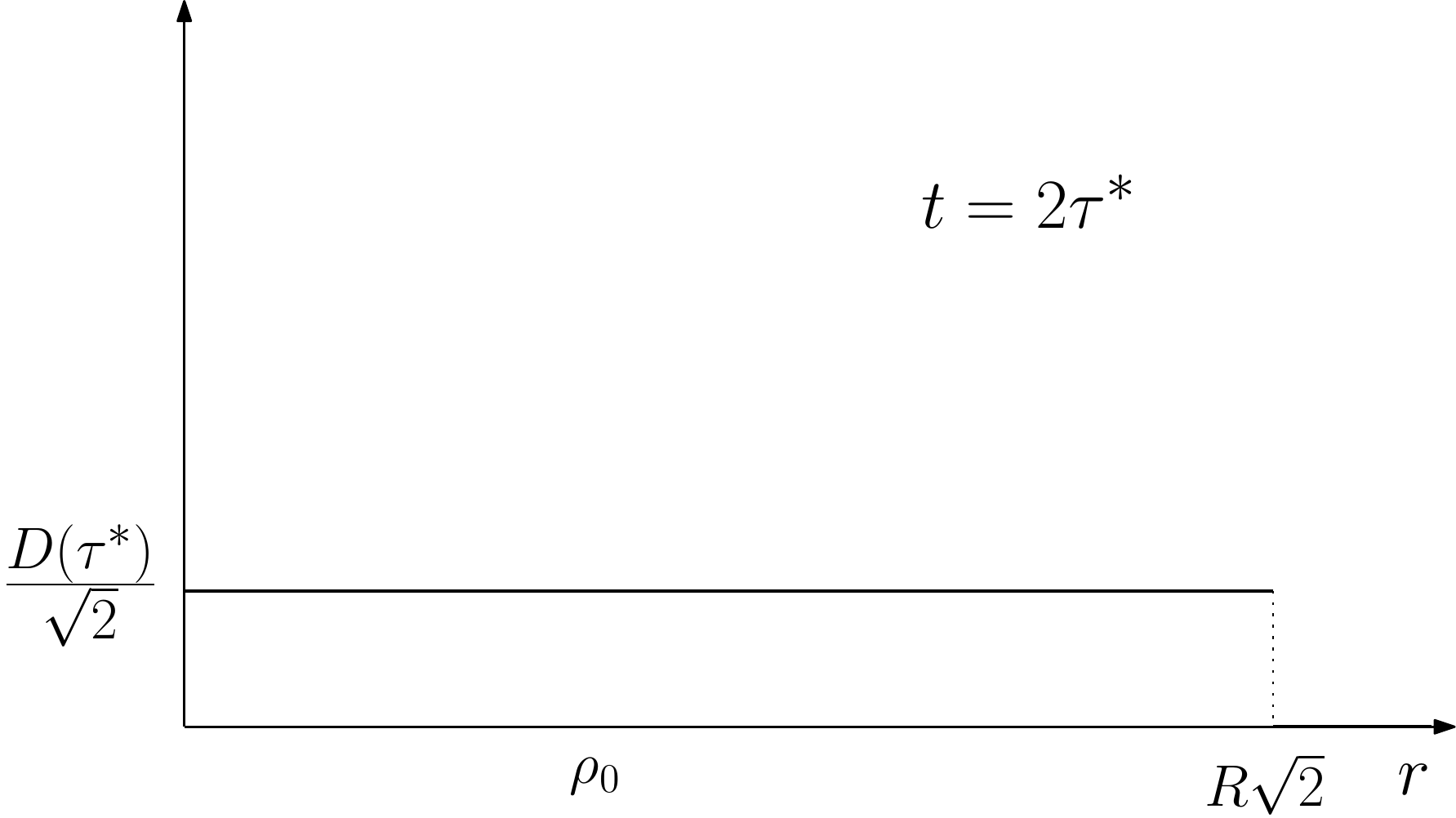} \end{minipage}
\caption{\label{figura2} {The radial profile of the function} $u(t,x)=\tilde{u}(t,r)$ in the case $x_0=0$, $N=1$, $m=2$,  evaluated resp., at $t=0, t=\frac{\tau^*}{2}, \tau^*$ and $2\tau^*$.}
\end{figure}

\begin{remark}
Note that the solutions {constructed in Proposition \ref{prop:wts} are continuous until $\rm \tau^*$, that is, as long as their support does not expand, and develop a jump discontinuity at the boundary of their support at $t=\rm \tau^*$ , that is, as  soon as their support starts expanding (see Figure \ref{figura2} as an example).}
%
%
We believe that such behavior is generic, {in the sense that the support of solutions to \eqref{pbparab-C} expands if and only if a jump discontinuity exists continuous across the support's boundary}.  In next section (see Example \ref{es2}) we will show that singularities may form also in the bulk of the solutions' support, a fact which has been numerically observed (\cite{CCM_plms13}, \cite{ACMSV_siam12}) and analytically shown for some analogous equations in pioneering papers \cite{bl2, bdp}.
\end{remark}


\begin{remark}
{In contrast {to} the case $m=1$, the operator $\mathcal A_g$ is not completely accretive.}

If it were, it would be accretive in $L^p(\Omega)$ for all $1\leq p\leq\infty$ \cite{BenilanCrandallCA}. {In particular, for any $u_0\in{\mathcal D (A_g)}^{L^\infty(\Omega)}$} 
the approximating solution $u_k$ constructed by Crandall-Ligget's scheme \eqref{eulerimplicit} would converge to the mild solution $u(t)=S(t)u_0$ uniformly in time in the $L^\infty(\Omega)$- topology. Therefore, since $u_k(t)\in DTBV_+(\Omega)$ for a.e. $t\in [0,\tau]$ and the convergence is uniform, it would follow that $u(t)\in DTBV_+(\Omega)$, too. {We will now show that this is not the case.}

{Let $u$ and $\tau^*$ as in Proposition \ref{prop:wts}.} We claim that $u_0\in \mathcal D( A_g)$ if $B(0,R)\Subset\Omega$, $g=0$, and $\tau^*\geq\frac{1}{m-1}$. For this, it suffices to prove that $u_0-(\dive \z)(0)\in L^\infty_+(\Omega),$ with $\z$ the vector field defined in the proof of Proposition \ref{prop:wts}, since the other conditions are guaranteed by construction. This is equivalent to show that $$u_0-(u_t)\res_{t=0}\in L^\infty_+(\Omega)\Leftrightarrow\left\{\begin{array}{c} D_0\geq D'(0) \\ C_0\geq C'(0)\end{array}\right.$$ The first inequality is always satisfied while the second one is equivalent to $\tau^*\geq\frac{1}{m-1}$. {Therefore $u_0\in \mathcal D(A_g)$, but the corresponding solution $u(t)\notin DTBV_+(\Omega)$ for $t\geq \tau^*$ and until the time in which $\supp u$ reaches $\partial\Omega$. This contradicts the previous argument, thus proving that $\mathcal A_g$ is not completely accretive.}
\end{remark}

\subsection{Optimal waiting-time bounds}

{The {\it waiting time} is a positive time during which the solution's support, locally in space, does not expand, e.g. $\tau^*$ is the waiting time for the solutions constructed in Proposition \ref{prop:wts}. It is well-known that waiting time phenomena are expected to occur for degenerate parabolic equations, depending on the local behavior of the initial datum. In the next two theorems we provide a scaling-wise sharp condition on the initial datum for the existence of a positive waiting time.}

\begin{thm}\label{wt-lower}
Let ${0\leq} u_{0}\in L^\infty_{loc}(\R^N)\cap L^1(\R^N)$ and let $u$ be the entropy solution to {\eqref{pbparab-C} in $(0,+\infty)\times\R^N$.}
If $x_0\in \R^N$ is such that
$$
\sup_{x\in\R^N} |x-x_0|^{-1/(m-1)} u_0(x) =: L<+\infty,
$$
then
$$
u(t,x_0)=0 \quad \mbox{for all $t<{\rm \tau}_{\rm low}:= \frac{1}{N(m-1)+1}L^{1-m}$}.
$$
\end{thm}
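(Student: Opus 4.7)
The plan is to construct an explicit self-similar solution $\phi$ of \eqref{m} that vanishes at $x_0$, dominates $u_0$ initially, and blows up precisely at the critical time $\tau_{\rm low}$; a comparison argument then forces $u(t,x_0)=0$ for all $t<\tau_{\rm low}$. Guided by the scaling invariance \eqref{note-inv} and by the exponent $1/(m-1)$ appearing in the hypothesis, I would take the ansatz
\begin{equation*}
\phi(t,x)=A(t)\,|x-x_0|^{\frac{1}{m-1}}.
\end{equation*}
Away from $x_0$, $\nabla\phi/|\nabla\phi|=(x-x_0)/|x-x_0|$, and a direct computation yields
\begin{equation*}
\dive\!\left(\phi^m\,\frac{\nabla\phi}{|\nabla\phi|}\right)=\frac{N(m-1)+1}{m-1}\,A(t)^m\,|x-x_0|^{\frac{1}{m-1}},
\end{equation*}
so the equation $\phi_t=\dive(\phi^m\nabla\phi/|\nabla\phi|)$ reduces to the scalar ODE $A'=\frac{N(m-1)+1}{m-1}\,A^m$. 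Integrating with $A(0)=L$ gives
\begin{equation*}
A(t)=\bigl(L^{1-m}-(N(m-1)+1)\,t\bigr)^{-\frac{1}{m-1}},
\end{equation*}
which is finite exactly on $[0,\tau_{\rm low})$. By construction $\phi(0,\cdot)\geq u_0$ pointwise on $\R^N$ and $\phi(t,x_0)\equiv 0$.

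Next, I would verify that $\phi$ qualifies as an entropy solution of \eqref{pbparab-C} on every interval $[0,\tau]$ with $\tau<\tau_{\rm low}$. The vector field $\z:=\phi^m\,\nabla\phi/|\nabla\phi|$ extends continuously to $x_0$ (since $\phi^m\to 0$ there); the distributional identity $\phi_t=\dive\z$ then holds throughout $\R^N$; $D\phi(t,\cdot)$ has neither jump nor Cantor part; and the remaining entropy identity $\z\cdot\nabla\phi=|\nabla\Phi(\phi)|$ is immediate from $\nabla\phi/|\nabla\phi|=\hat{r}$, just as in the proof of Theorem \ref{th:ss}. Thus $\phi$ is a genuine entropy solution of its own Cauchy problem, with initial datum $\phi(0,\cdot)$.

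The key and delicate step is the comparison $u\leq\phi$ near $x_0$: since $\phi(t,x_0)=0$ and $u\geq 0$, this forces $u(t,x_0)=0$. Viewing $u$ as a sub-solution to the Cauchy problem with initial datum $\phi(0,\cdot)\geq u_0$, one would like to invoke Theorem \ref{T-sub-C} directly; the obstacle is that this requires $\supp u\cap((0,\tau)\times\R^N)$ to be compact, while $u$ is only $L^1$, and moreover $\phi$ itself is unbounded at infinity. To resolve the first issue I would approximate $u_0$ by $u_{0,n}:=u_0\,\chi_{B(0,n)}$, which preserves the pointwise bound and has compact support, so that the corresponding solution $u_n$ satisfies $\supp u_n(t,\cdot)\subset B(x_0,R)$ for every $t\in[0,\tau]$ by the finite speed of propagation (Theorem \ref{teo:fsp}), with $R$ independent of $n$ large. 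To resolve the second issue I would adapt the doubling-variables proof of Theorem \ref{T-sub-C} locally: because $u_n$ is supported in $B(x_0,R)$ and vanishes on $\partial B(x_0,R)$, the only regions that contribute to $\tilde I_1$ and $\tilde I_2$ in that proof are already contained in a bounded set on which $\phi$ is bounded, and the unboundedness of $\phi$ at infinity plays no role. This yields $u_n\leq\phi$, and the $L^1$-contraction of the semigroup then gives $u\leq\phi$ in the limit $n\to\infty$. The main obstacle is precisely this localization of the comparison argument: the naive truncation $\min(\phi,M)$ fails to be a super-solution (the term $M^m(N-1)/r$ appearing in the outer region produces the wrong inequality sign), so one really has to extract, from the global doubling-variables proof, a local version that exploits the compact support of $u_n$ rather than the global boundedness of $\phi$.
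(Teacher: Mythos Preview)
Your construction is exactly the paper's: the same barrier
\[
\overline u(t,x)=\left(\frac{|x-x_0|}{(N(m-1)+1)(\tau_{\rm low}-t)}\right)^{1/(m-1)}
\]
followed by Theorem \ref{T-sub-C} with $\overline u$ as solution and $u$ as subsolution. The paper's proof is three lines and simply invokes the comparison without discussing the compact-support hypothesis on the subsolution; your truncation $u_{0,n}=u_0\chi_{B(0,n)}$ together with finite speed of propagation is the natural way to close that gap. Your second concern, about $\phi$ being unbounded at infinity, is unnecessary: Definition \ref{def-sol_c} only requires $L^\infty_{\rm loc}$, and in the proof of Theorem \ref{T-sub-C} the cutoff $\sigma$ is chosen compactly supported and equal to $1$ on $\supp\underline u$, so only the values of $\overline u$ on that fixed compact set enter the doubling-variables estimates---the theorem applies to $\overline u$ and $u_n$ as stated, with no need to reopen its proof.
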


\begin{proof}
We may assume without loss of generality that $x_0=0$. A straightforward computation shows that
$$
\ou(t,x)=\left(\frac{|x|}{(N(m-1)+1)({\rm \tau_{low}}-t)}\right)^{1/(m-1)}
$$
is a solution {to \eqref{pbparab-C} in $(0,{\rm \tau_{low}})\times \R^N$}. In view of the definition of ${\rm  \tau_{ low}}$, we have
$$
u(0,x) \le L |x|^{1/(m-1)} \le \ou(0,x) \quad \mbox{for all $x\in \R^N$},
$$
hence Theorem \ref{T-sub-C} (applied with $\ou$ as solution and $u$ as subsolution) implies that $u(t,x)\le\ou (t,x)$ for $t<{\rm \tau_{low}}$.
\end{proof}

\begin{thm}
Let $0\leq u_{0}\in L^\infty_{loc}(\R^N)\cap L^1(\R^N)$ {be nonnegative}, let $u$ be the entropy solution to {\eqref{pbparab-C} in $(0,+\infty)\times\R^N$, and let $x_0\in \overline{\R^N\setminus \supp(u_0)}$. Let}
$$
t_*=\sup\left\{t\ge 0: \ x_0\in \overline{\R^N\setminus \supp(u(\tau,\cdot))} \quad \mbox{for all $\tau\in [0,t]$}\right\}.
$$
If
\begin{equation}\label{crit-m}
\lim_{\rho\to 0^{+}} \essinf_{x\in B(x_0+\rho \nu_0,\rho)} u_{0}(x) |x-x_0|^{-\frac{1}{m-1}} = \ell\in (0,+\infty],
\end{equation}
for some $\nu_0 \in \mathbb S^{N-1}$, then
\begin{equation}\label{def-W}
t_*\le {\rm \tau}_{\rm up}:= \frac{1}{N(m-1)+1} \ell^{1-m}.
\end{equation}
In particular, $t_*=0$ if $\ell=+\infty$.
\end{thm}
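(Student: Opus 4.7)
My strategy is to prove the bound by comparison with an explicit family of compactly-supported entropy subsolutions. By translation invariance I reduce to the case $x_0=0$. Given $\varepsilon\in(0,\ell)$, condition \eqref{crit-m} provides $\rho>0$ such that $u_0(x)\ge(\ell-\varepsilon)|x|^{1/(m-1)}$ for a.e.\ $x\in B(\rho\nu_0,\rho)$. The plan is to construct, for each such $\varepsilon$, an entropy subsolution $\underline u_\varepsilon$ of \eqref{pbparab-C} satisfying $\underline u_\varepsilon(0,\cdot)\le u_0$ a.e.\ and with $0\in\supp\underline u_\varepsilon(\tau_\varepsilon,\cdot)$ for some $\tau_\varepsilon$ approaching $\alpha\ell^{1-m}$ as $\varepsilon\to0^+$. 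The comparison principle of Theorem~\ref{T-sub-C} then gives $\underline u_\varepsilon\le u$ pointwise, hence $t_*\le\tau_\varepsilon$, and passing to the limit concludes the proof.

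The natural candidate for $\underline u_\varepsilon$ is a suitably translated and rescaled copy of the explicit waiting-time solution in Proposition~\ref{prop:wts}, centered at $\rho\nu_0$ with outer radius $R\le\rho$. The key geometric ingredient is the reverse triangle inequality $R-|x-\rho\nu_0|\le|x|$ on $B(\rho\nu_0,R)$ which, combined with the near-boundary expansion $C_0\psi(r)\sim\Lambda(R-r)^{1/(m-1)}$ with $\Lambda:=D_0\bigl(p/(\rho_0((R/\rho_0)^p-1))\bigr)^{1/(m-1)}$, guarantees $\underline u_\varepsilon(0,x)\le\Lambda|x|^{1/(m-1)}$ on its support provided the transition zone $(\rho_0,R)$ is chosen narrow enough (so that the asymptotic profile is globally sharp up to $o(1)$); together with the trivial bound outside the support, the choice $\Lambda\le\ell-\varepsilon$ then yields $\underline u_\varepsilon(0,\cdot)\le u_0$ everywhere. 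The waiting-time formula $\tau^*=\Lambda^{1-m}/m$ from Proposition~\ref{prop:wts}, combined with the two-parameter scaling invariance $(t,x,u)\mapsto(Tt,Xx,(X/T)^{1/(m-1)}u)$ of~\eqref{m}, delivers the time $\tau_\varepsilon$.

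The main technical obstacle is the recovery of the sharp prefactor $\alpha=1/(N(m-1)+1)$. In dimension $N=1$ one has $\alpha=1/m$, so the waiting time $\Lambda^{1-m}/m$ produced by Proposition~\ref{prop:wts} coincides with $\alpha\Lambda^{1-m}$ and the construction above directly yields $t_*\le\alpha(\ell-\varepsilon)^{1-m}$, whence the bound after letting $\varepsilon\to0^+$. For $N\ge2$ one has $\alpha<1/m$, so the radial solutions of Proposition~\ref{prop:wts} alone yield only the weaker bound $\ell^{1-m}/m$; to close the gap one must refine the choice of subsolution, tailoring its profile to the conical direction $\nu_0$ and absorbing the geometric factor through a careful limiting procedure that combines the two scaling invariances of \eqref{m} with the inward comparison of the Barenblatt-type profile $c(t)|x|^{1/(m-1)}$ (which is an exact solution of \eqref{m} with blow-up time exactly $\alpha\ell^{1-m}$), in the spirit of the analogous argument for the relativistic porous medium equation carried out in \cite{GMP1}.
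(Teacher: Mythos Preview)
Your overall strategy---comparison with the explicit waiting-time solutions of Proposition~\ref{prop:wts}, centered at a point in the direction $\nu_0$ so that $x_0$ lies on the boundary of their support---is exactly the paper's. The paper is more precise than you on the initial comparison: rather than invoking the near-boundary asymptotics $C_0\psi(r)\sim\Lambda(R-r)^{1/(m-1)}$ and restricting to a ``narrow transition zone'', it verifies directly that the inequality $C_0\psi(r)\le(\ell-\varepsilon)(R-r)^{1/(m-1)}$ holds for \emph{every} $r\in[\rho_0,R]$ (by checking that $r\mapsto (R-r)/\bigl(r((R/r)^p-1)\bigr)$ is increasing); together with the bound on the flat cap, this reduces to the single constraint $D_0^{m-1}\le(\ell-\varepsilon)^{m-1}(R-\rho_0)$, valid for all $\rho_0\in(0,R)$.

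On the final optimization, the paper does \emph{not} distinguish $N=1$ from $N\ge2$: after obtaining
\[
t_*\le\tau^*=(\ell-\varepsilon)^{1-m}\,\frac{1}{mp}\,\frac{R^p-\rho_0^p}{\rho_0^{p-1}(R-\rho_0)}\qquad\text{for all }\rho_0\in(0,R),
\]
it simply asserts that ``minimizing with respect to $\rho_0$\dots\ yields the conclusion''. Your computation is in fact correct: the ratio above is strictly decreasing in $\rho_0$ with infimum $p$ (as $\rho_0\to R$), so this argument only delivers $t_*\le\ell^{1-m}/m$, which equals $\tau_{\rm up}=\ell^{1-m}/(mp)$ precisely when $p=1$, i.e.\ $N=1$. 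The gap you flag for $N\ge2$ is therefore present in the paper's own proof.

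However, your proposal does not close that gap. The final paragraph is a sketch, not an argument: you neither construct an explicit alternative subsolution nor verify any entropy or Rankine--Hugoniot conditions for it, and the appeal to \cite{GMP1} is not enough since the barriers there are built for a different equation. To recover the sharp prefactor $\alpha$ in dimension $N\ge2$ one needs a concrete family of compactly supported subsolutions whose support reaches $x_0$ at a time approaching $\alpha\ell^{1-m}$; neither the solutions of Proposition~\ref{prop:wts} nor the self-similar ones of Theorem~\ref{th:ss} achieve this, so a genuinely new construction or comparison is required.
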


\begin{remark} {Note that the balls in \eqref{crit-m} are nested, hence the infimum with respect to $\rho$ is monotone increasing: therefore the limit in \eqref{crit-m} exists and coincides with the supremum over $\rho$.}
In view of Theorem \ref{wt-lower}, we have
$$
L^{1-m}\le (N(m-1)+1)t_* \le \ell^{1-m}.
$$
Hence the estimate is scaling-wise sharp.
\end{remark}

\begin{proof}
We may assume without loss of generality that $x_0=0$. In view of \eqref{crit-m}, for any $\eps\in(0,\ell)$ there exists $R>0$  such that
\begin{equation}
\label{lbR-u0}
u_{0}(x) \ge (\ell-\eps) |x|^{\frac{1}{m-1}}\quad\mbox{for all $x\in B(R \nu_0,R)$.}
\end{equation}
We wish to choose initial constants in Proposition \ref{prop:wts} such that
$$
\uu(t,x)= {D}(t) \chi_{B_{\rho(t)}} + C(t)\psi(r)\chi_{B_R\setminus B_{\rho(t)}}, \quad B_\rho:=B(R\nu_0,\rho), \quad r={\blue\|}x-R\nu_0\|
$$
 is a {solution with initial datum $\uu(0)\le u_0$, so that we can use it as a subsolution}. {Take $\rho_0< R$.} On $B_{\rho_0}$ we need
\begin{equation}\label{cond1}
\inf_{x\in B_{\rho_0}} u_0(x)  {\stackrel{\eqref{lbR-u0}}\ge} \inf_{x\in B_{\rho_0}} (\ell-\eps) |x|^{1/(m-1)} = (\ell-\eps) |R-\rho_0|^{1/(m-1)} \ge D_0.
\end{equation}
On $B_R\setminus B_{\rho_0}$, for any $r\in [\rho_0,R]$ we need
$$
\inf_{\|x-R\nu_0\|=r} u_0(x) {\stackrel{\eqref{lbR-u0}}\ge} \inf_{|x-R\nu_0|=r} (\ell-\eps) |x|^{1/(m-1)} = (\ell-\eps) |R-r|^{1/(m-1)} \ge C_0\psi(r),
$$
that is,
$$
\left(\ell-\eps\right)^{m-1} |R-r| \ge C_0^{m-1}(\psi(r))^{m-1}= D_0^{m-1} \frac{r}{\rho_0} \frac{\left(\left(\frac{R}{r}\right)^p-1\right)}{ \left(\left(\frac{R}{\rho_0}\right)^p-1\right)},
$$
which is implied by
$$
D_0^{m-1}\le \left(\ell-\eps\right)^{m-1} \rho_0 \left(\left(\tfrac{R}{\rho_0}\right)^p-1\right)\min_{r\in [\rho_0,R]} \frac{ R-r}{r\left(\left(\frac{R}{r}\right)^p-1\right)}.
$$
It is easy to see that the function to be minimized is increasing (take $x=r/R\in (0,1)$). Hence the minimum is attained at $r=\rho_0$, so that we need
$$
D_0^{m-1} \le \left(\ell-\eps\right)^{m-1} (R-\rho_0),
$$
which coincides with \eqref{cond1}. We choose equality. Therefore {$\uu$ is a subsolution, hence $\uu\le u$ by Theorem \ref{T-sub-C}.} Since the support of $\uu$ starts expanding at time ${\rm \tau^*}$, we have
$$
t_*\le {\rm \tau^{\blue *}}=\frac{\rho_0 A_0^{1-m}} {{mp}}\left(\left(\frac{R}{\rho_0}\right)^p-1\right) = \left(\ell-\eps\right)^{1-m}\frac{1}{mp} \frac{R^p-\rho_0^p}{\rho_0^{p-1}(R-\rho_0)}
$$
for all $\rho_0\in (0,R)$. Minimizing with respect to $\rho_0$ and recalling the arbitrariness of $\eps$ and the definition of $p$ yields the conclusion.
\end{proof}

\section{Burgers' type dynamics}

In this section, we concentrate on the one-dimensional case:
\begin{equation}
\label{eq-1d}
u_{t}=  \left(u^{m}\frac{u_{x}}{| u_{x}|}\right)_{x}.
\end{equation}
Formally speaking, $\frac{u_{x}}{| u_{x}|}$ is constant on intervals in which $u$ is strictly monotone, whence \eqref{eq-1d} reduces to a nonlinear conservation law: for instance,
$$
u_{t}=  \left(u^{m}\frac{u_{x}}{| u_{x}|}\right)_{x}=- \left(u^{m}\right)_{x} \quad \mbox{in $J\times I$} \quad\mbox{if $u(t,\cdot)$ is decreasing in $I$ for a.e. $t\in J$}.
$$
This formal observation suggests that the behavior of solutions to \eqref{eq-1d} is strictly related to that of {\blue a} nonlinear conservation law. In what follows we give two examples of the relationship between the two: in the first one, solutions in fact coincide; in the second one, instead, the qualitative and quantitative properties turn out to differ sensibly.

\smallskip

{
Prior to the examples, let us recall that an entropy solution to the generalized Burgers equation,}
\begin{equation}
\begin{cases}
v_{t}= -  \left(v^{m}\right)_{x} &  \text{in}\,  (0,{\rm \tau})\times\re, \\
v(0)=v_{0} & \text{in}\  \re\,
\end{cases}\label{pdeb}
\end{equation}
with $m>1$, is a bounded function $v\in L^\infty((0,\infty); TBV_{loc}(\R))$ satisfying \eqref{pdeb}$_1$ in distributional sense, $v(0)=v_{0}$,  and
\begin{equation}\label{K1}
\eta(v)_t + (q(v))_x \le 0 \quad\mbox{in $\mathcal D'(\R)$}
\end{equation}
for all convex functions (entropies) $\eta$, with corresponding entropy flux $q$ defined by $q'(v) = mv^{m-1}\eta'(v)$ (see e.g. \cite{LOW}).

\begin{example}
Let $\Omega=]0,R[$, let $u_0:\Omega\to \R$ be nonincreasing. Assume that ${\rm supp} (u_0)\subset [0,R[$. Then the {entropy} solution to
\begin{equation}
\begin{cases}
u_{t}=  \left(u^{m}\frac{u_{x}}{| u_{x}|}\right)_{x} &  \text{in}\,  (0,{\rm \tau})\times [0,R], \\
u(0)=u_{0} & \text{in}\  [0,R], \\ u(t,0)= u_0(0), \ u(t,R)=0& \text{for}\ t>0,
\end{cases}\label{pde}
\end{equation}
coincides in $[0,R]$ with the entropy solution $v$ to \eqref{pdeb} with

\begin{equation}\label{pdeb0}
v_0(x)= \left\{\begin{array}{ll} u_0(0) & \mbox{if $x\le 0$}
\\ u_0(x) & \mbox{if $x\in [0,R]$}
\\ 0 & \mbox{if $x\ge R$}.
\end{array}
\right.
\end{equation}
\end{example}

\begin{proof}
{\blue Let $v$ be the entropy solution to \eqref{pdeb} with \eqref{pdeb0} as initial datum. We will show that $u:=v\lfloor_{[0,R]}$ is a solution to \eqref{pde}.}

{It follows from \eqref{pdeb0}, the monotonicity of $u_0$, and} Lax-Oleinik formula (see e.g. \cite{evans}) that
\begin{equation}\label{K2}
\mbox{$v(t,x)=u_0(0)\ $ for all $t>0$ and all $x<m (u_0(0))^{m-1}t$}
\end{equation}
and
\begin{equation}
\label{K3}
\mbox{$v_x(t,{\cdot})\le 0 \ $ as a measure in $\R$ {for all $t>0$}.}
\end{equation}
Choosing $\eta(v)=J_\ell(v)$ with $\ell\in\mathcal L$, we have $\eta'(v)=\ell(v)$,
$$
q_\ell(v)=\int_0^{v} mw^{m-1}\ell(w)\d w = v^m \ell(v)- \int_0^v v^m \ell'(w)\d w = v^m \ell(v)- {\blue \Phi}_\ell(v),
$$
and for any nonnegative $\psi\in C^\infty_c((0,\infty)\times \R)$ it holds that
\begin{eqnarray*}
\iint_{(0,\infty)\times \R} J_\ell(v)\psi_t &\ge & - \iint_{(0,\infty)\times \R} \left(v^m \ell(v)- \Phi_\ell(v)\right)\psi_x
\\ &=&  - \iint_{(0,\infty)\times \R} v^m \ell(v)\psi_x - \iint_{(0,\infty)\times \R} \left(\Phi_\ell(v)\right)_x\psi
\\ &\stackrel{\eqref{K3}} =& - \iint_{(0,\infty)\times \R} v^m \ell(v)\psi_x + \iint_{(0,\infty)\times \R}\psi\d\left|\left(\Phi_\ell(v)\right)_x\right|,
\end{eqnarray*}
whence \eqref{main-ineq} holds choosing $z= -v^m$. Condition \eqref{boundconddu>g} is immediate from \eqref{K2} 
{\blue and the fact that $v$ is nonnegative}. Condition \eqref{dist} follows from \eqref{pdeb}$_1$ and the choice of $z$; the regularity $\ell(v)\in L^1([0,{\rm \tau});BV (0,R))$  for all $\ell\in \mathcal L$ follows from the regularity of $v$. Observe that the boundary condition \eqref{boundcondd=} is automatically satisfied. Hence, since $v \in C([0,{\rm \tau});L^1(0,R))$ and  $v_t\in L^\infty_{loc}((0,{\rm \tau}],\mathcal M(0,R))$ (see, for instance \cite{BenilanCrandallCA}), the proof is finished.
\end{proof}

The next example shows that instead, for the Cauchy problem, the solution's behavior is different from that of the associated Burgers equation.

\begin{example}\label{es2}
  Let
  $$
  u_0(x)=2\chi_{[0,1]}+(3-x)\chi_{[1,2]}+\frac{9-x}{7}\chi_{[2,9]} \quad\mbox{for $x\ge 0$}, \qquad u_0(x)=u_0(-x) \quad\mbox{for $x\le 0$}.
  $$
\begin{figure}[htbp]\centering
\includegraphics[keepaspectratio,height=3cm, width=6cm]{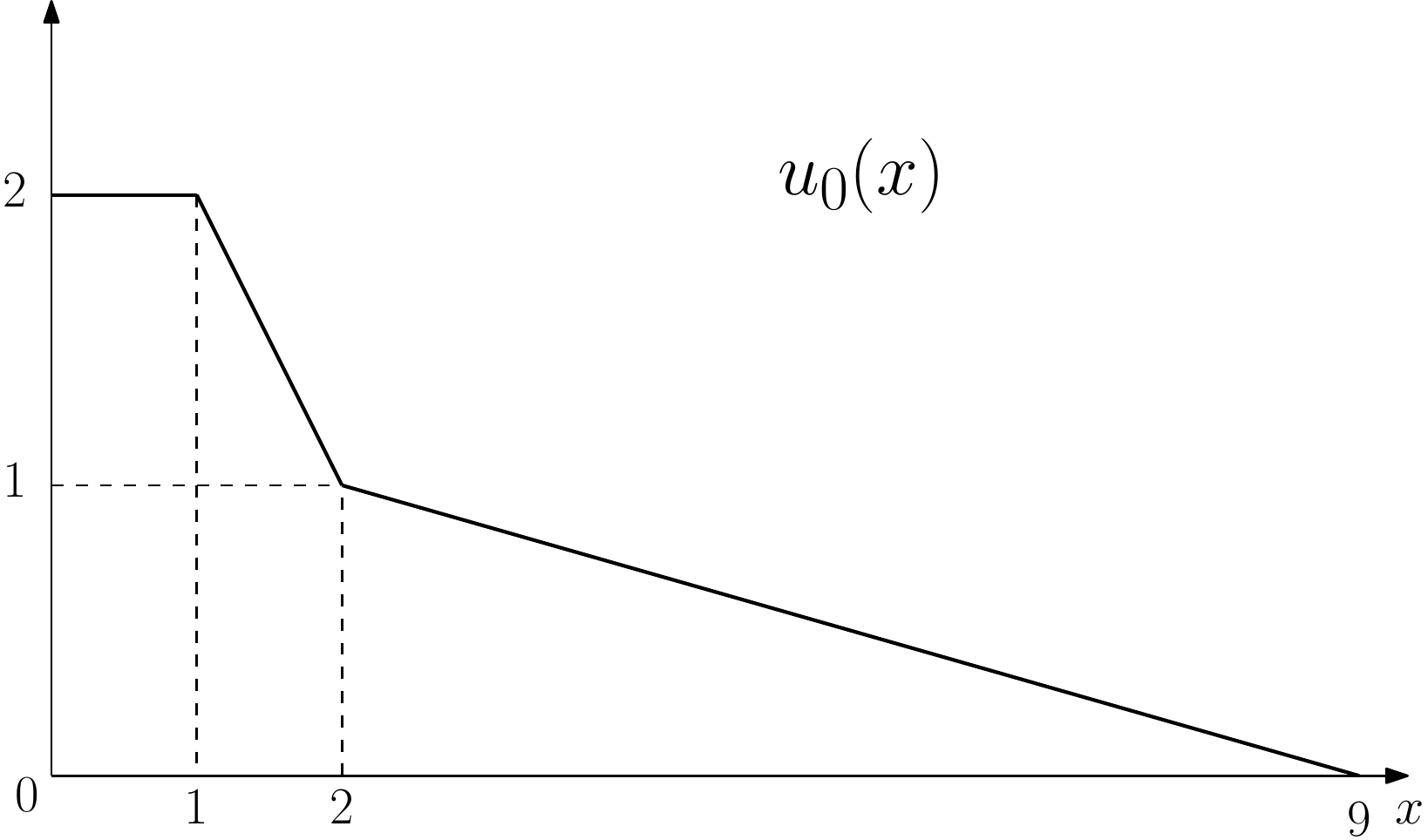}
\caption{$u_0(x)$}
\end{figure}

\noindent Then there exist $t_*\in(\frac{1}{2},\frac{7}{2})$ and nonnegative functions ${\blue D},r\in C([0,t_*-\frac{1}{2}])$ with $D$ decreasing, $D(t-\frac{1}{2})>\frac{9-x}{7-2t}$ in $(\frac{1}{2},t_*)$ and $r$ increasing with $r(0)=3$ and $r(t_*-\frac{1}{2})<9$, such that the solution to
\begin{equation}
\begin{cases}
u_{t}=  \left(u^{2}\frac{u_{x}}{| u_{x}|}\right)_{x} &  \text{in}\,  (0,{\rm \tau})\times \re, \\
u(0)=u_{0} & \text{in}\  \re
\end{cases}\label{pdebur}
\end{equation}
is symmetric with respect to $x=0$ and for $x\ge 0$ is given by:
\begin{equation}
\label{ex2-sol}
u(t,x)=\left\{\begin{array}{ll} \dys \frac{3-\sqrt{16t+1}}{1-2t}\chi_{[0,\sqrt{16t+1}]}(x)+\frac{3-x}{1-2t} \chi_{[\sqrt{16t+1},{2+2t}]}(x)+\frac{9-x}{7-2t}\chi_{[{2+2t},9]}(x)  & \quad\mbox{if $t<1/2$,}
\\[3ex] \dys
D\left(t-\frac{1}{2}\right)\chi_{\left[0,r\left(t-\frac{1}{2}\right)\right)} +\frac{9-x}{7-2t}\chi_{\left(r\left(t-\frac{1}{2}\right),9\right]} & \quad \mbox{if $\frac{1}{2}\le t<t_*$}
\\[3ex] \dys \frac{9-\sqrt{28t-17}}{7-2t}\chi_{[0,\sqrt{28t-17}]}+\frac{9-x}{7-2t}\chi_{[\sqrt{28t-17},9]} & \quad t_*\leq t\leq \frac{7}{2}
\\[3ex] \dys \left(\frac{32}{49}+\frac{2 t}{7}\right)^{\frac{-1}{2}}\chi_{[0,7\sqrt{\frac{32}{49}+\frac{2t}{7}}]} & t\geq \frac{7}{2} \end{array} ,\right.
\end{equation}

\begin{figure}[H]
\begin{minipage}[c]{0.33\textwidth}
\includegraphics[keepaspectratio,height=5.5cm, width=5.5cm]{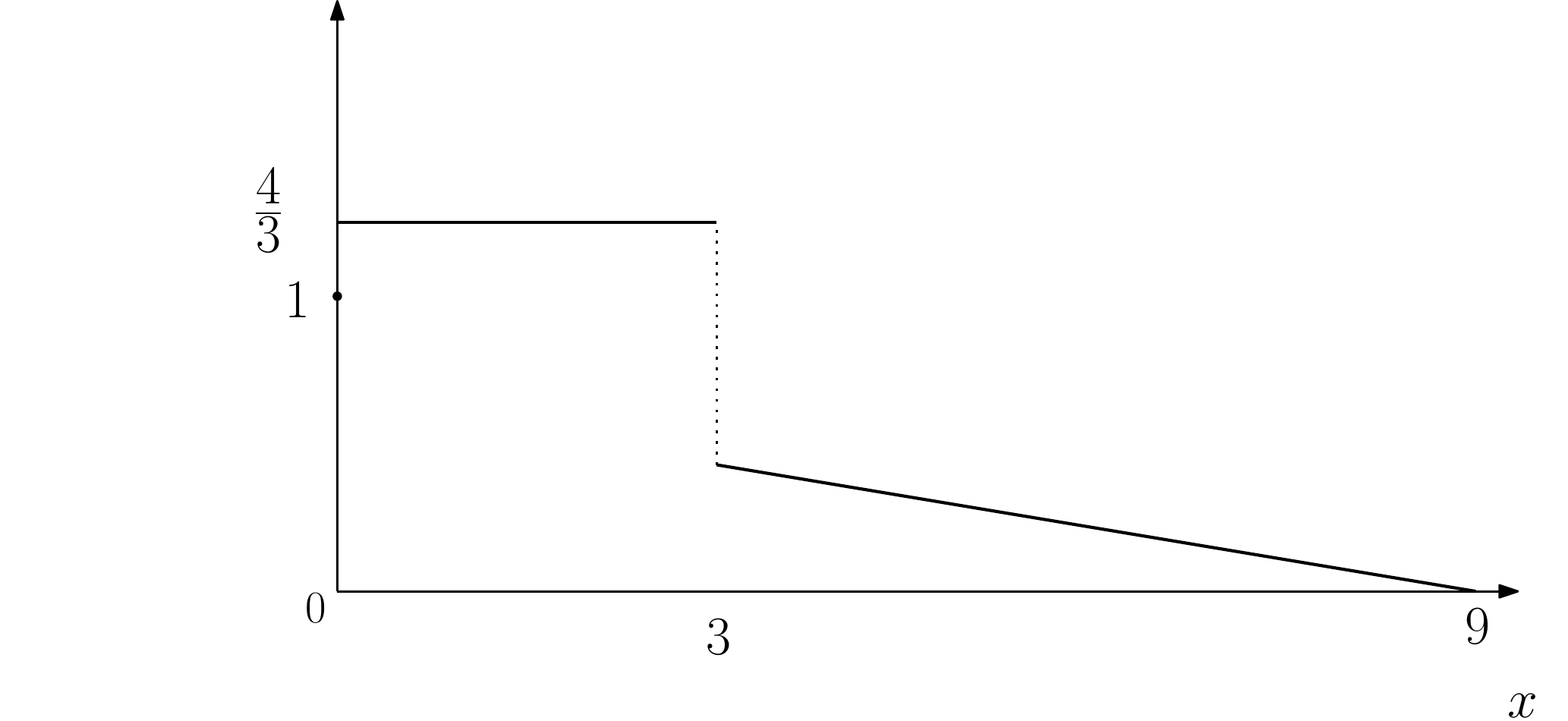}\end{minipage}
\begin{minipage}[c]{0.33\textwidth}\includegraphics[keepaspectratio,height=5.5cm, width=5.5cm]{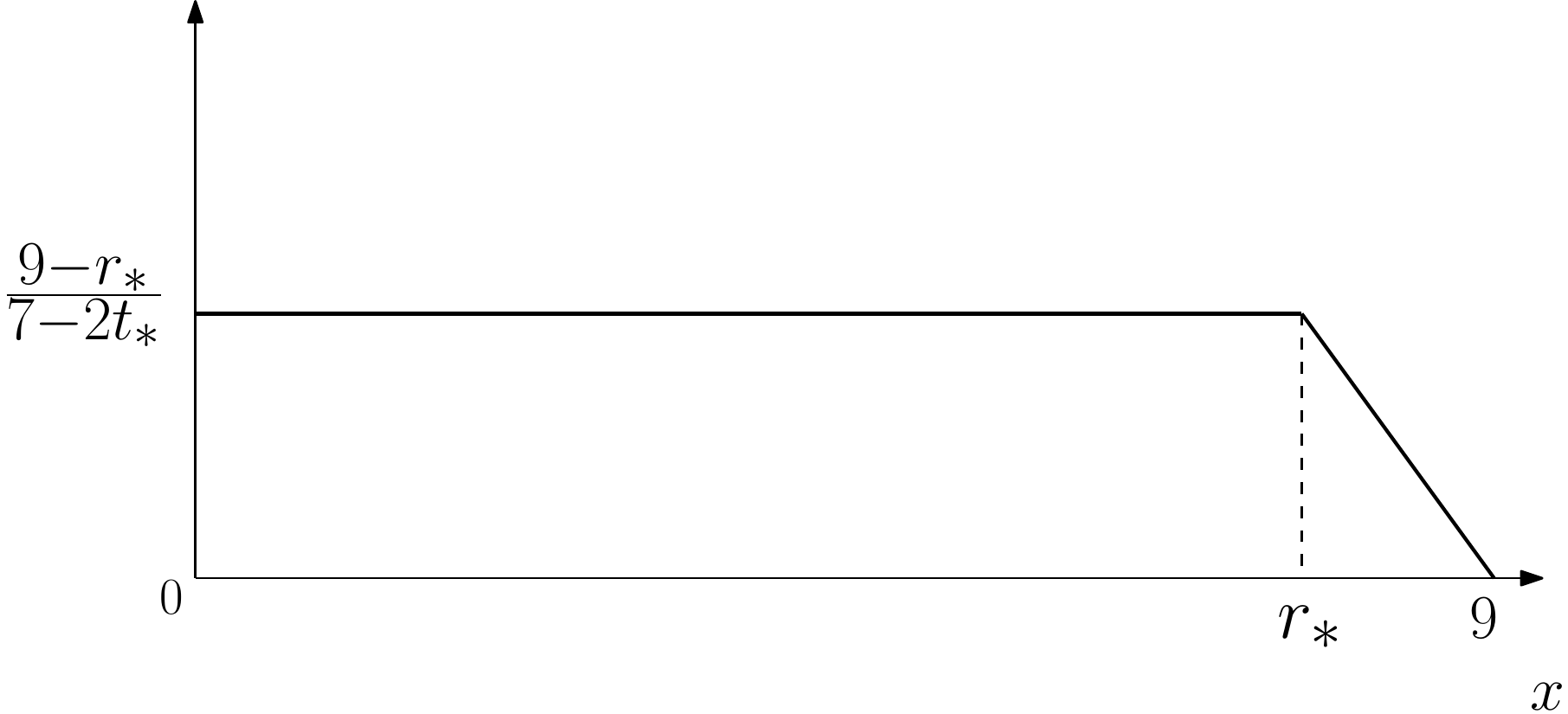} \end{minipage}
\begin{minipage}[c]{0.33\textwidth}\includegraphics[keepaspectratio,height=5.5cm, width=5.5cm]{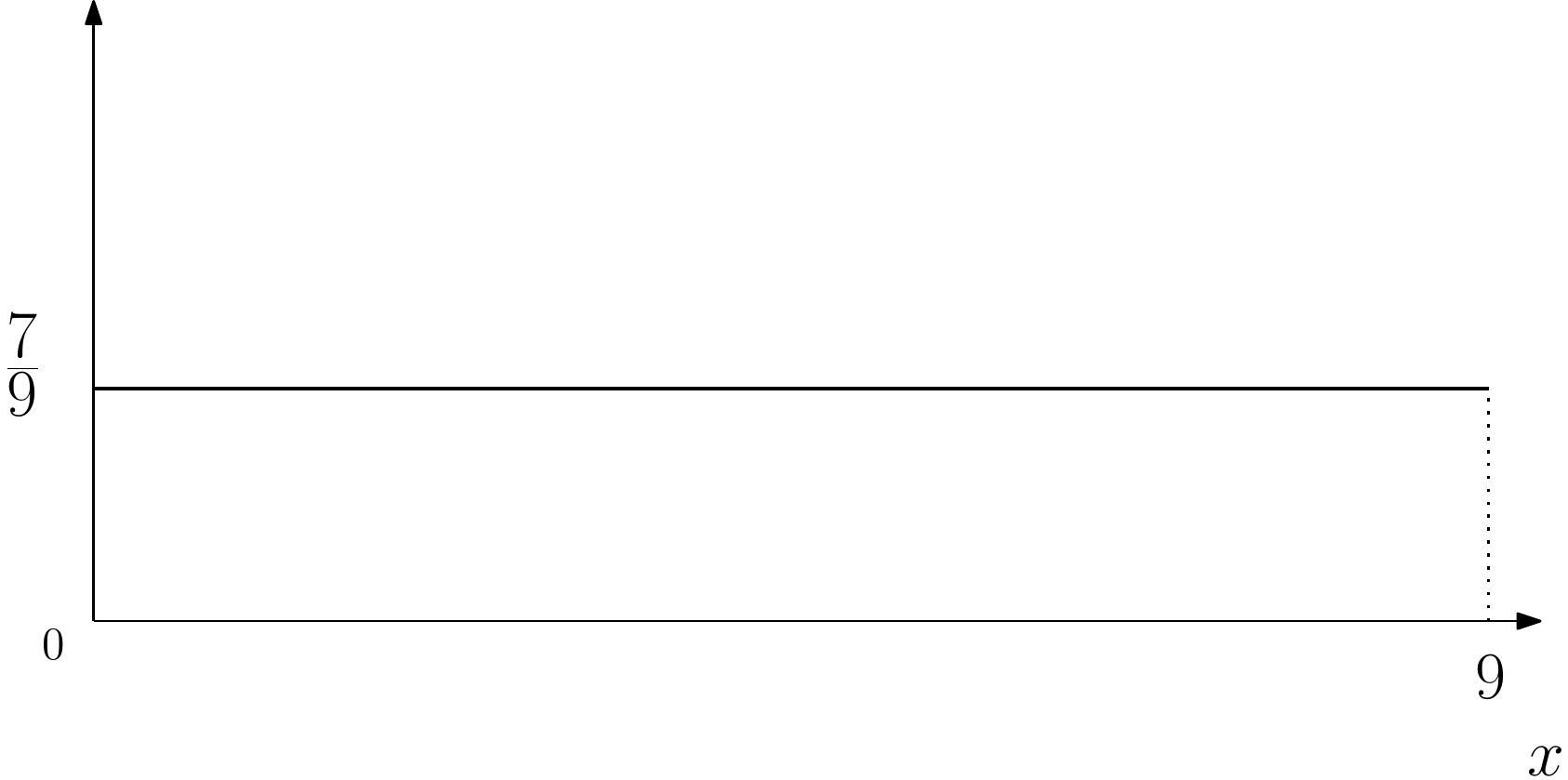} \end{minipage}
\caption{{The function} $u$ {in \eqref{ex2-sol}} at times $t_1=\frac12$, $t_2=t_*$, and $t_3={\frac72}$.}
\end{figure}

\end{example}

Before the proof, let us briefly comment on the structure of such solution, also by comparing it with the solution to the Burgers equation
\begin{equation}
\begin{cases}
v_{t}= -  \left(v^{2}\right)_{x} &  \text{in}\,  (0,{\rm \tau})\times\re, \\
v(0,x)=\left\{\begin{array}{ll} u_{0}(x) & \text{for}\  x\ge 0 \\ u_{0}(0) & \text{for}\  x\le 0,
\end{array}\right.
\end{cases}\label{pdebburg}
\end{equation}
which can be easily found by the method of characteristics:
\begin{equation}\label{sol-burg}
v(t,x)=\left\{\begin{array}{ll} \displaystyle 2\chi_{(-\infty,1+4t]}(x) + \frac{3-x}{1-2t}\chi_{(1+4t,2+2t]}(x) + \frac{9-x}{7-2t} \chi_{(2+2t,9]}(x)
 &\mbox{if $t<1/2$}
 \\[2ex] \displaystyle 2\chi_{(-\infty,r_v(t)]}(x) + \frac{9-x}{7-2t} \chi_{(r_v(t),9]}(x) & \mbox{if $1/2\le t<11/4$},
 \\[2ex] \displaystyle 2\chi_{(-\infty,9+2(t-11/4)]}(x) &  \mbox{if $t>11/4$},
\end{array}\right.
\end{equation}
where $r_v(t)= \sqrt{42-12t} + 4t-5$.

\smallskip

\begin{figure}[H]
\begin{minipage}[c]{0.33\textwidth}
\includegraphics[keepaspectratio,height=5.5cm, width=5.5cm]{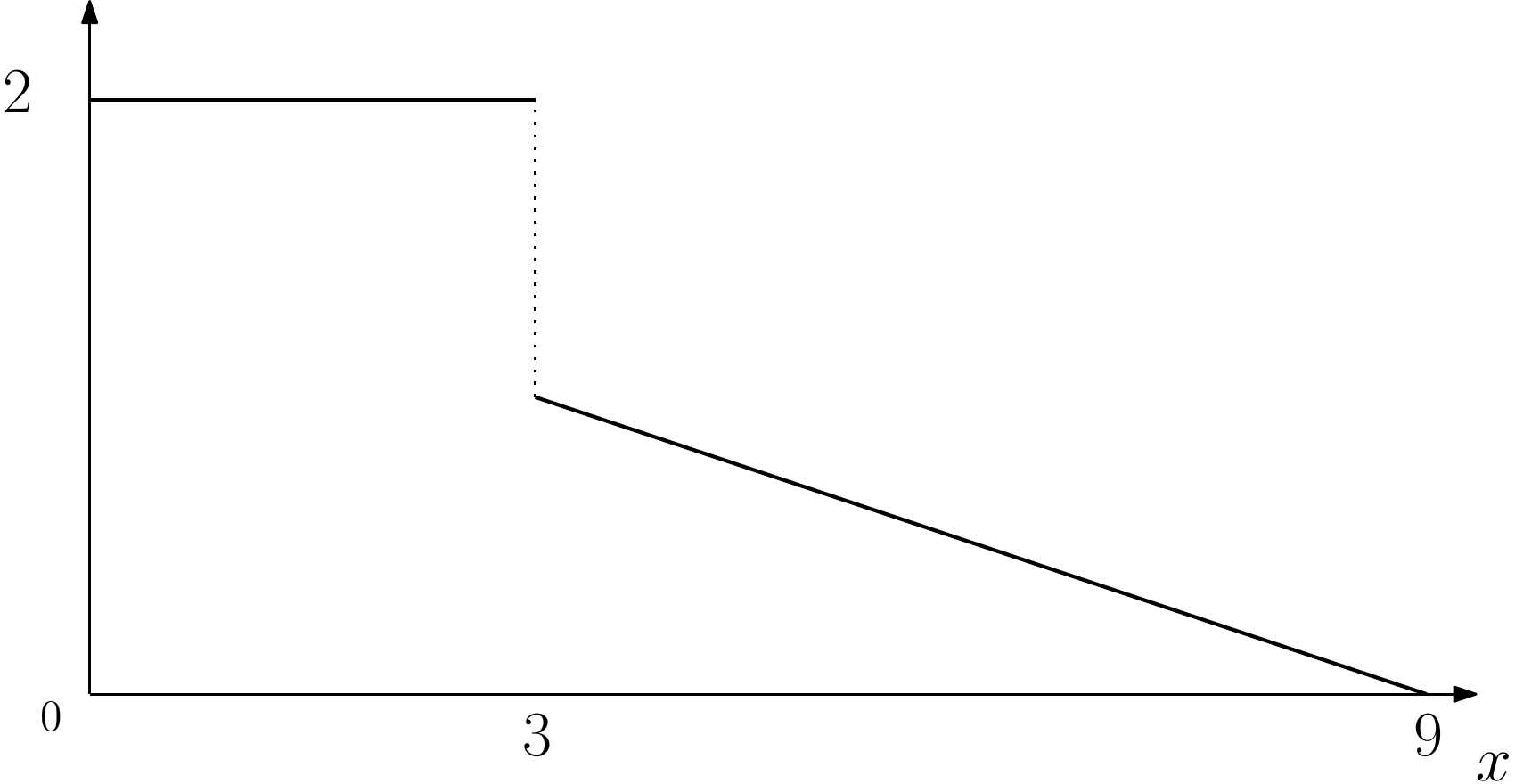}\end{minipage}
\begin{minipage}[c]{0.33\textwidth}\includegraphics[keepaspectratio,height=5.5cm, width=5.5cm]{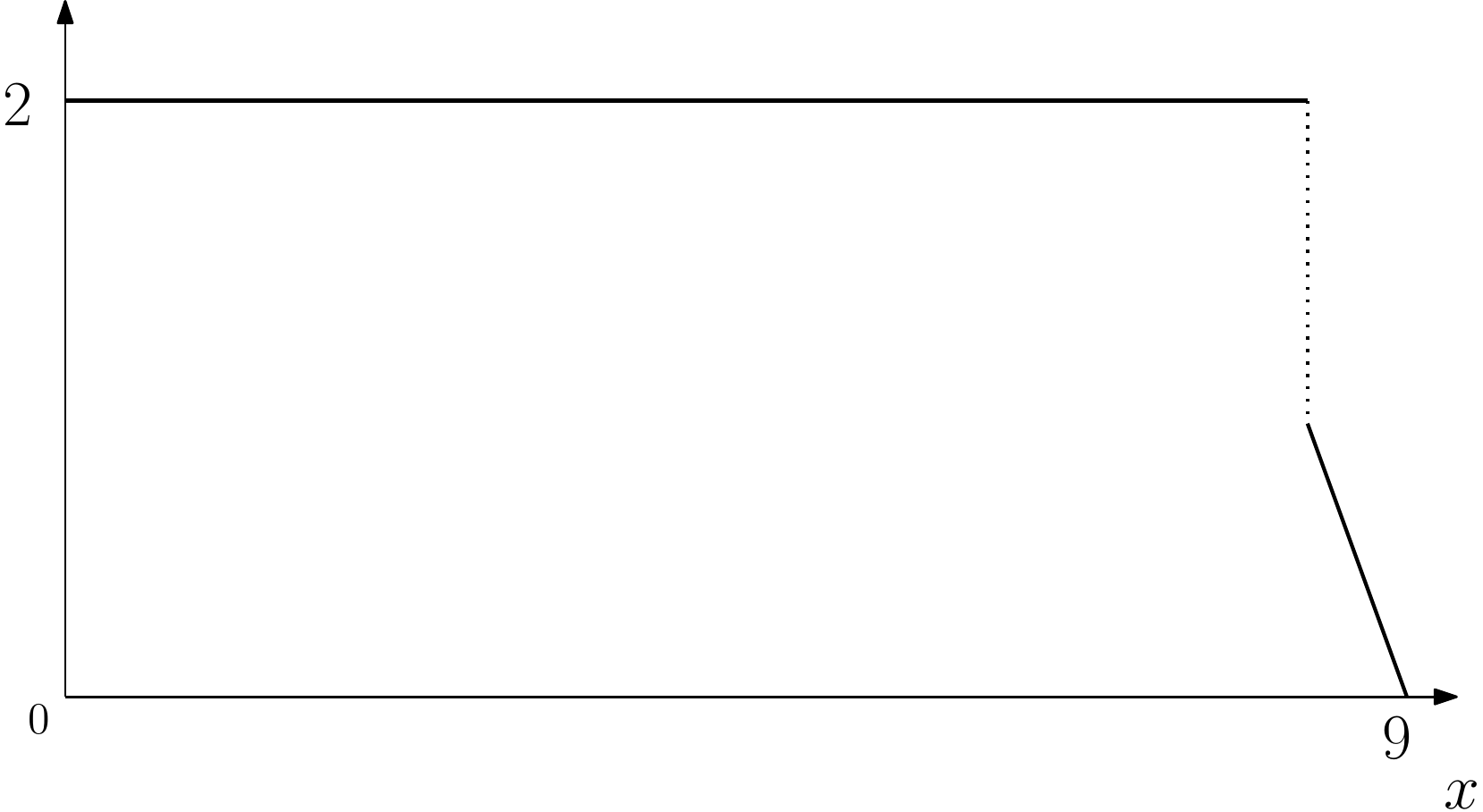} \end{minipage}
\begin{minipage}[c]{0.33\textwidth}\includegraphics[keepaspectratio,height=5.5cm, width=5.5cm]{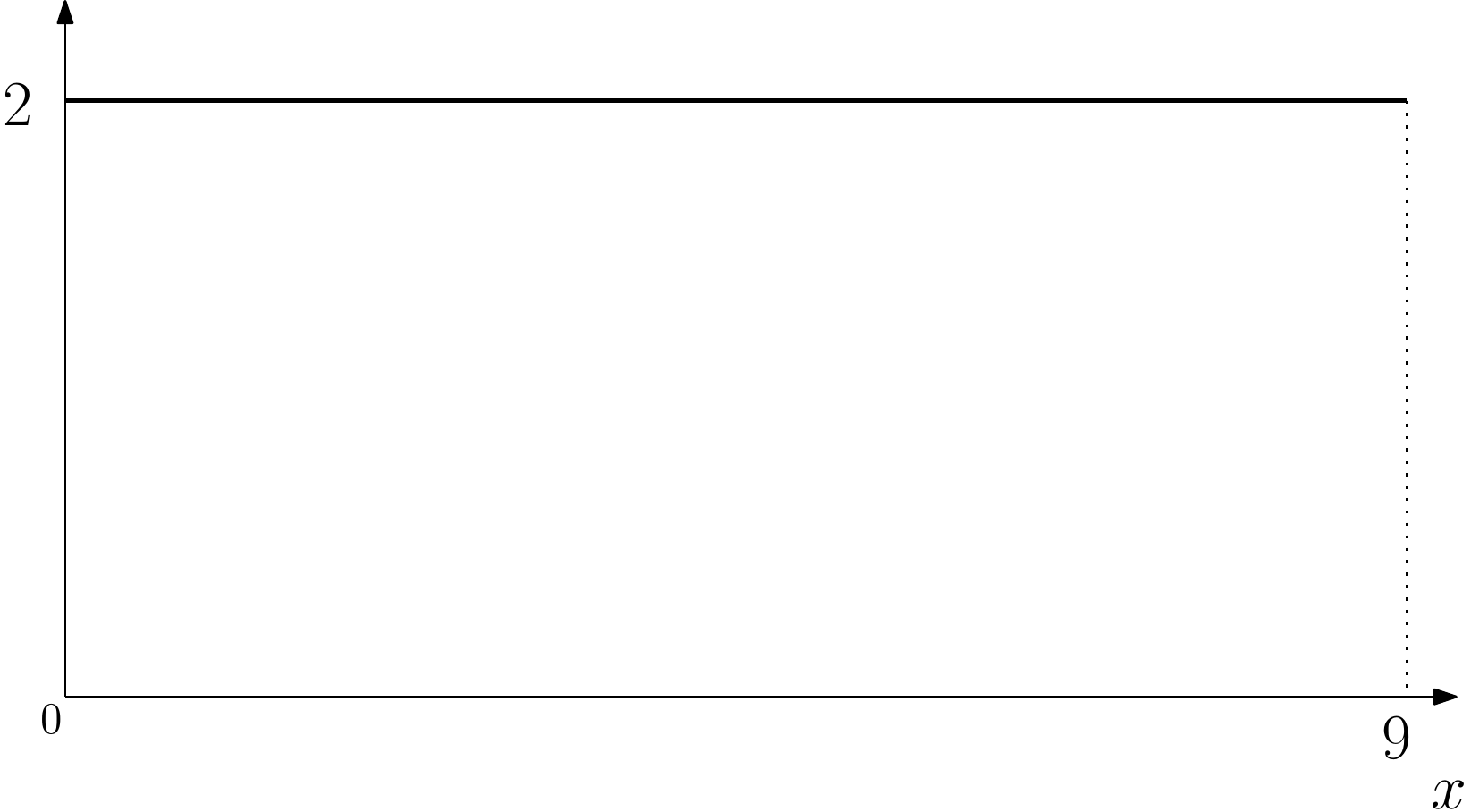} \end{minipage}
\caption{{The function} $v$ {in \eqref{sol-burg}} at times $t_1=\frac12$, $t_2\in \left(\frac12,\frac{11}{4}\right)$, and $t_3=\frac{11}{4}$}
\end{figure}

{The behaviour of $u$ and $v$ for $x\le 0$ is obviously different ($u$ is even, $v$ is constant for $x\le 0$) and does not deserve comments. Comparing $u$ and $v$ for $x\ge 0$, two different features should be noted. Firstly, the bulk singularity (which is formed in both cases at $t_1=\frac12$) persist for $v$, whereas it vanishes at time $t_*$ for $u$. Hence (by the Rankine-Hugoniot condition, which holds in both cases) the bulk singularity travels faster for $v$ than for $u$ ($\frac{11}{4}<\frac72$). Secondly, the height of the plateau  is constant for $v$, whereas it decreases for $u$. The nonlocal effect caused by mass constraint is the source of both of these qualitative differences.
}

\begin{proof}
\ignore{Let us first describe the construction of the candidate solution $u$ in \eqref{ex2-sol}. 
Observe that the candidate solution is symmetrically nonincreasing. Once the construction is finished, it suffices to take
$$
z(t,x):=\left\{\begin{array}{ll} \dys -\frac{x}{\sqrt{16t+1}}\chi_{[0,\sqrt{16t+1}]}-\chi_{[\sqrt{16t+1},+\infty)} & {\rm if \ } 0\leq t\leq \frac{1}{2}
\\[2ex] \dys -\frac{x}{r(t-\frac{1}{2})}\chi_{[0,r(t-\frac{1}{2})]}-\chi_{[r(t-\frac{1}{2}),+\infty)} & {\rm if \ } \frac{1}{2}<t\leq t^*
\\[2ex] \dys -\frac{x}{\sqrt{28t-17}}\chi_{[0,\sqrt{28t-17}]}-\chi_{[\sqrt{28t-17},+\infty)} & {\rm if \ } t^*<t\leq \frac{7}{2}
\\[2ex] \dys -\frac{x}{\sqrt{28t-17}}\chi_{[0,\sqrt{28t-17}]}-\chi_{[\sqrt{28t-17},+\infty)} & {\rm if \ } t^*<t\leq \frac{7}{2}
\\[2ex] \dys -\frac{x}{7\sqrt{\frac{32}{49}+\frac{2t}{7}}}\chi_{[0,7\sqrt{\frac{32}{49}+\frac{2t}{7}}]}-\chi_{]7\sqrt{\frac{32}{49}+\frac{2t}{7}}]} & {\rm if \ }t>\frac{7}{2}
\end{array}\right.,
$$
and symmetrize it. Then, it is immediate to show that $u_t= z_x$ and that \eqref{conditionatjump} hold. Since $u_x^c=0$, by Theorem \ref{thm:charac}, $u$ will be the solution to \eqref{pdebur}.
}
Of course $u$ will be symmetric with respect to $x=0$, hence we only work for $x\ge 0$. The candidate solution $u$ is constructed as follows: as long as the first singularity of $v$ appears, $u$ behaves as $v$, except for the fact that mass needs to be preserved: hence the flat region on top expands and decreases: for $x\ge 0$,
$$
u(t,x)= D_1(t)\chi_{[0,r_1(t)]}(x) + \frac{3-x}{1-2t}\chi_{(r_1(t),2+2t]}(x) + \frac{9-x}{7-2t} \chi_{(2+2t,9]}(x) \quad \mbox{if $t<1/2$},
$$
where $D_1$ and $r_1$ have to be obtained by imposing continuity of $u$ and mass conservation, that is,
$$
D_1(t)=\frac{3-r_1(t)}{1-2t}, \quad\mbox{resp.} \quad 7= D_1(t)r_1(t)+ \int_{r_1(t)}^{2+2t}\frac{3-x}{1-2t}\d x + \int_{2+2t}^9\frac{9-x}{7-2t}\d x.
$$
Solving the equation gives $D_1$ and $r_1$ as in \eqref{ex2-sol}. At $t=1/2$,
$$
u(1/2,x)=\frac{4}{3}\chi_{[0,3]}(x)+\frac{9-x}{6}\chi_{]3,9]}.
$$

We now consider $s:=t-1/2>0$. Then
$$
u(s+1/2,x)=D(s)\chi_{[0,r(s)[}+\frac{9-x}{2(3-s)}\chi_{]r(s),9]}.
$$
In this case, we recover $D$ and $r$ from the Rankine-Hugoniot condition and mass conservation, that is,
\begin{equation}\label{resp21}
r'(s)=D(s)+\frac{9-r(s)}{2(3-s)},
\end{equation}
respectively
\begin{equation}\label{resp22}
7= D(s)r(s) + \int_{r(s)}^9\frac{9-x}{2(3-s)}\d x=D(s)r(s) + \frac{(9-r(s))^2}{4(3-s)},
\end{equation}
as long as $s<3$ and
$$
0<C(s):= u^+(s +1/2,r(s))=\frac{9-r(s)}{2(3-s)} <D(s).
$$
We now argue for $s<3$. As long as it is defined, $C$ solves
\begin{eqnarray}
\label{edoB1}
C'(s) &=& \frac{C(s)-D(s)}{2(3-s)}
\\
\label{edoB2}
 &= & \frac{-(3-s)C^2(s)+9C(s)-7}{2(3-s)(9-2(3-s)C(s))}
\end{eqnarray}
with initial condition $C(0)=1$. Since $B$ is initially decreasing and
$$
9-2(3-s)C(s))>0 \ \iff \ C<\frac32 <\frac{9}{2(3-s)},
$$
$C$ is well defined as long as $C<3/2$. Equation \eqref{edoB2} may be integrated implicitly, yielding
$$
\arctanh\left(\sqrt{\frac{3 - s}{7}} C(s)\right) -
\frac{ \sqrt{7(3 - s)} (14 - 9 C(s))}{63 - 9 (3 - s) B^2(s)} =
 \arctanh\left(\sqrt{\frac37}\right) - \frac{5\sqrt{21}}{36}
$$
Therefore
$$
C(s)=1 \ \iff \ f(s):=\arctanh\left(\sqrt{\frac{3 - s}{7}}\right) -\arctanh\left(\sqrt{\frac37}\right)
-\frac{ 5\sqrt{7(3 - s)}}{36 + 9s}
+ \frac{5\sqrt{21}}{36}=0.
$$
We already know that $f(0)=0$. Simple computations show that $f'(0)=\frac{7\sqrt{7}}{144\sqrt{3}}>0$ and
$$
f(3)= \frac{5\sqrt{21}}{36}-\arctanh\left(\sqrt{\frac37}\right)<0\,.
$$
 Therefore there exists $s_1\in (0,3)$ such that $f(s_1)=0$, i.e. $C(s_1)=1=C(0)$. By Rolle's theorem, there exists $s_*\in (0,s_1)$ such that $C'(s_*)=0$ and $C'<0$ in $(0,s_*)$. Then \eqref{edoB1} implies that $D(s_*)=C(s_*)=\frac{9-r(s_*)}{2(3-s_*)}$. Hence $r_*=r(s_*)<9$, and it follows from mass conservation (Propostition \ref{prop-mass-cons}) that
$$
7= D(s_*)r_* + \frac{(9-r_*)^2}{4(3-s_*)}=\frac{(81-r_*^2)}{2(6-2s_*)},
$$
whence $r_*:=r(s_*)<9$. This completes the construction of \eqref{ex2-sol} in the time interval $[\frac{1}{2},t^*]$. At $t=t_*=s_*+1/2<7/2$, we have
$$
u(t_*,x) =\frac{9-r_*}{7-2t_*}\chi_{[0,r_*)[}(x)+\frac{9-x}{7-2t_*}\chi_{[r_*,9]}(x), \quad r_*=\sqrt{81 - 14(7-2t_*)}=\sqrt{28t_*-17},
$$
a continuous (piecewise linear) function. Hence we may argue as in the construction of the solution in the time interval $[0,\frac{1}{2}]$, obtaining
$$
u(t,x)=\frac{9-\sqrt{28t-17}}{7-2t}\chi_{[0,\sqrt{28t-17}]}+\frac{9-x}{7-2t}\chi_{[\sqrt{28t-17},9]}\,\quad t_*\leq t\leq \frac{7}{2}.
$$
At $t_{**}=\frac{7}{2}$, the solution develops a new singularity, since $\sqrt{28t-17}\to 9$ and  $\frac{9-\sqrt{28t-17}}{7-2t}\to \frac79$ as $t\to t_{**}^-$. Therefore
$$
u(t_{**})=\frac{7}{9}\chi_{[0,9]}.
$$
After $t_{**}$ the solution becomes the self-simliar one obtained in Theorem \ref{th:ss}; i.e.
$$
u(t,x)=\left(\frac{32}{49}+\frac{2t}{7}\right)^{\frac{-1}{2}}\chi_{[0,7\sqrt{\frac{32}{49}+\frac{2t}{7}}]}.
$$


\end{proof}


\acks The second author acknowledges partial support by  Spanish MCIU
and FEDER project PGC2018-094775-B-I00. The authors also acknowledge partial support by GNAMPA of the italian  Instituto Nazionale di Alta Matematica.

\end{document}